\let\oldmarginpar\marginpar
\renewcommand\marginpar[1]{\-\oldmarginpar[\raggedleft\footnotesize #1]%
{\raggedright\footnotesize #1}}
\theoremstyle{plain}
\newtheorem{thm}{Theorem}[section]
\newtheorem{cor}[thm]{Corollary}
\newtheorem{prop}[thm]{Proposition}
\newtheorem{lemma}[thm]{Lemma}
\newtheorem{ques}{Question}
\newtheorem{conj}{Conjecture}
\theoremstyle{definition}
\newtheorem{defn}[thm]{Definition}
\newtheorem{ex}[thm]{Example}
\newtheoremstyle{TheoremNum}
{\topsep}{\topsep}
{\itshape}
{}
{\bfseries}
{.}
{ }
{\thmname{#1}\thmnote{ \bfseries #3}}
\theoremstyle{TheoremNum}
\newtheorem{thmn}{Theorem}
\DeclareMathOperator{\Aut}{Aut}
\DeclareMathOperator{\GL}{GL}
\DeclareMathOperator{\GCD}{gcd}
\DeclareMathOperator{\D}{D}
\DeclareMathOperator{\Farb}{F}
\DeclareMathOperator{\Conj}{Conj}\DeclareMathOperator{\Inn}{Inn}
\DeclareMathOperator{\Tconj}{Tconj}
\DeclareMathOperator{\Ord}{ord}
\DeclareMathOperator{\im}{Im}
\DeclareMathOperator{\id}{id}
\DeclareMathOperator{\rank}{rank}
\DeclareMathOperator{\ab}{ab}
\newcommand{\suchthat}{\;\ifnum\currentgrouptype=16 \middle\fi|\;}
\newcommand{\bdef}{\overset{\text{def}}{=}}
\newcommand{\al}{\alpha}
\newcommand{\ga}{\gamma}
\newcommand{\Ga}{\Gamma}
\newcommand{\innp}[1]{\left< #1 \right>}
\newcommand{\set}[1]{\left\{#1\right\}}
\newcommand{\pr}[1]{\left( #1 \right) }
\newcommand{\N}{\ensuremath{\mathbb{N}}}
\newcommand{\Z}{\ensuremath{\mathbb{Z}}}
\newcommand{\map}[3]{#1 : #2 \rightarrow #3}
\newcommand{\surj}[3]{#1: #2 \twoheadrightarrow #3}
\newcommand{\nsub}{\trianglelefteq}
\begin{document}
\title{\textbf{Effective Twisted Conjugacy \\ Separability of Nilpotent Groups}}
\author{Jonas Der\'e\thanks{KU Leuven Kulak, Kortrijk, Belgium. Email: \tt{jonas.dere@kuleuven.be}. Supported by a postdoctoral fellowship of the Research Foundation -- Flanders (FWO).} \ and Mark Pengitore\thanks{Purdue University, West Lafayette, IN. E-mail: \tt{mpengito@purdue.edu}}} 
\maketitle
\begin{abstract}
This paper initiates the study of effective twisted conjugacy separability for finitely generated groups, which measures the complexity of separating distinct twisted conjugacy classes via finite quotients. The focus is on nilpotent groups, and our main result shows that there is a polynomial upper bound for twisted conjugacy separability. That allows us to study regular conjugacy separability in the case of virtually nilpotent groups, where we compute a polynomial upper bound as well. As another application, we improve the work of the second author by giving a possibly sharp upper bound for the conjugacy separability for finitely generated nilpotent groups of nilpotency class $2$.
\end{abstract}

\section{Introduction}
\label{intro}
Let $G$ be a finitely generated group with a fixed automorphism $\varphi$. We say that $x,y \in G$ are \emph{$\varphi$-twisted conjugate} if there exists $z \in G$ such that $z \: x \: \varphi(z)^{-1} = y$. That is an equivalence relation and the equivalence classes of $x \in G$ is called the $\varphi$-twisted conjugacy class which is denoted as $[x]_{\varphi}$. We say $G$ is \emph{$\varphi$-twisted conjugacy separable} if for each non-$\varphi$-twisted conjugate pair $x,y \in G$, there exists a surjective group morphism to a finite group $\map{\pi}{G}{Q}$ such that $\pi(x) \notin \pi([y]_\varphi)$, or equivalently, $\pi(y) \notin \pi([x]_\varphi)$. When $G$ is $\varphi$-twisted conjugacy separable for all $\varphi \in \Aut(G)$, we say that $G$ is \emph{twisted conjugacy separable}.

The interest in twisted conjugacy classes arises in many different areas of mathematics such as Reidemeister fixed point theory \cite{Felshtyn_zeta,Felshtyn_nielsen_reidemeister_theory,Jiang}, Selberg theory \cite{Arthur_Clozel,Shokranian}, and algebraic geometry \cite{Grothendieck}. Twisted conjugacy separability was originally introduced formally in \cite{Felstyn_2} and has been further studied in \cite{Felstyn_1}.

When $\varphi = \id$, the $\varphi$-twisted conjugacy classes are equal to the usual notion of conjugacy classes. Similarly, $\varphi$-twisted conjugacy separable is equivalent to the usual notion of conjugacy separable when $\varphi = \id$ . Thus, the notion of $\varphi$-twisted conjugacy separability is a natural generalization of conjugacy separability where we allow our conjugacy classes to be twisted by $\varphi$. In particular, if $G$ is a finitely generated group that contains a characteristic, finite index, twisted conjugacy separable subgroup, then $G$ is twisted conjugacy separable, see \cite[Thm 5.2]{Felstyn_1}. That is in contrast to conjugacy separability which is not closed with respect to finite extensions or finite index subgroups \cite{Goryaga,Martino_Minasyan}. Thus, twisted conjugacy separability gives us an important tool in studying the more classical notion of conjugacy separability since twisted conjugacy separability implies conjugacy separability.

Conjugacy separability, along with residually finiteness, subgroup separability, and other residual properties, have been extensively studied and used in resolving important conjectures in geometry such as Agol's work on the Virtual Haken conjecture. Previous work in the literature has been to understand what groups satisfy these properties. For instance, polycyclic groups, free groups, residually free groups, limit groups, Bianchi groups, surface groups, fundamental groups of compact, orientable $3$-manifolds, and virtually compact special hyperbolic groups have been shown to be conjugacy separable and subsequently, residually finite \cite{Blackburn,Chagas_Zalesskii_bianchi_groups,Chagas_Zalesskii,Chagas_Zalesskii_limit_groups,Formanek,conjugacy_3_manifolds,Chagas_Zalesskii_virtually_special,Stebe}. Recently, there has been considerable activity in establishing effective versions of the above separability properties (see \cite{Bou_Rabee_Mcreynolds1,Bou_Rabee_Mcreynolds2,Bou_Rabee10,Bou_Rabee11,Bou_Rabee_2015,Bou_Rabee_Kaletha,Bou_Rabee_Mcreynolds3,Bou_Rabee_Seward,Buskin,Kassabov_Matucci,Kharlampovich_Myasnikov_Sapir,Kozma_Thom,Patel,Patel2,Rivin,Thom}). The main purpose of this article is to improve on the effective conjugacy separability results of \cite{Pengitore_1} for nilpotent groups and to establish effective twisted conjugacy separability for the class of virtual nilpotent groups.

For a finitely generated group $G$ with a finite generating subset $S$ and an automorphism $\varphi$, we introduce a function $\Conj_{G,S}^\varphi(n)$ on the natural numbers that quantifies $\varphi$-twisted conjugacy separability. To be specific, the value on a natural number $n$ is the maximum order of the minimal finite quotient needed to distinguish pairs of non-$\varphi$-conjugate elements as one varies over the $n$-ball. We also quantify the more general property of twisted conjugacy separability via the function $\Tconj_{G,S}(n)$. We start by establishing a norm $\|\varphi\|_S$ for automorphisms and then define $\Tconj_{G,S}(n)$ on a natural number $n$ to be the maximum value of $\Conj_{G,S}^\varphi(n)$ as one varies over automorphisms $\varphi$ satisfying $\|\varphi\|_S \leq n$. 

When the automorphism $\varphi$ is the identity map, the function $\Conj_{G,S}^\varphi$ is equal to the function that quantifies conjugacy separability introduced by Lawton-Louder-McReynolds \cite{LLM}, which is in this case denoted as $\Conj_{G,S}(n)$. As a natural consequence, $\Conj_{G,S}(n)$ is always bounded by $\Tconj_{G,S}(n)$. Additionally, we will see that if $H$ is a characteristic, twisted conjugacy separable subgroup of $G$ of index $r$, then there exist automorphisms $\set{f_i}_{i=1}^r$ of $H$ such that $\Conj_{G,S}(n)$ is bounded by $\prod_{i=1}^r(\Conj_{H,S'}^{f_i}(n))^{r}$ where  $S$ and $S^\prime$ are finite generating subsets of $G$ and $H$, respectively (see Theorem \ref{ext_twisted_conj}). Hence, if we are given a class of conjugacy separable groups that is closed under finite index subgroups, then we may study conjugacy separability of any finite extension of these groups by studying effective twisted conjugacy separability of the original group.

So far, previous papers have only studied the asymptotic behavior of $\Conj_{G,S}(n)$. For instance, if $G$ is a finitely generated nilpotent group, then the second author \cite{Pengitore_1} demonstrated $\Conj_{G,S}(n) \preceq n^{d_1}$ for some $d_1 \in \N$, and if, in addition, $G$ is a finitely generated nilpotent group that is not virtually abelian, then there exists $d_2 \in \N$ such that $n^{d_2} \preceq \Conj_{G,S}(n)$. Lawton-Louder-McReynolds demonstrated that $\Conj_{G,S}(n) \preceq n^{n^2}$ when $G$ is a finite rank free group or a surface group. This article is the first to study effective twisted conjugacy separability for any class of groups. 
As of now, there are no effective proofs of conjugacy separability for classes of groups such as polycyclic groups, fundamental groups of compact, orientable $3$-manifolds, Bianchi groups, etc. Even in the context of surface groups or free groups, there is no good asymptotic lower bound for $\Conj_{G,S}(n)$ other than the one provided by effective residual finiteness.

To state our results, we require some notation. For two non-decreasing functions $\map{f,g}{\N}{\N}$, we write $f(n) \preceq g(n)$ if there exists a $C \in \N$ such that $f(n) \leq C \: g(C\:n)$ for all $n$. We write $f \approx g$ when $f \preceq g$ and $g \preceq f$. If $N$ is a nilpotent group, we write $c(N)$ for its nilpotent class, as we will recall in Section \ref{sec:nilpotent}.

Our first result is the presumably exact upper bound for $\Conj_{N,S}(n)$ when $N$ is a $2$-step, torsion free, finitely generated nilpotent group and $S$ is a finite generating subset. 
\begin{thmn}[\ref{two_step_conjugacy_precise}]
	Let $N$ be a torsion free, finite generated nilpotent group with a finite generating subset $S$ such that $c(N) = 2$. Then there exists $\Phi(N) \in \N$ such that $\Conj_{N,S}(n) \preceq n^{\Phi(N)}$.
\end{thmn}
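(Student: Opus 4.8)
The plan is to reduce ($\id$-twisted) conjugacy in $N$ to a linear–algebra problem in the centre $[N,N]$ and to separate non-conjugate elements by an explicit congruence quotient.

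\textbf{The commutator pairing.} Because $c(N)=2$, the subgroup $[N,N]$ is central, and since $N$ is torsion free $[N,N]\cong\Z^{m}$ with $m=\rank[N,N]$. For $a,b\in N$ the commutator $[a,b]$ depends only on the images $\bar a,\bar b$ in $N^{\ab}$, and $\omega(\bar a,\bar b):=[a,b]$ defines a biadditive alternating map $\omega\colon N^{\ab}\times N^{\ab}\to[N,N]$. In a class-$2$ group $\{\,[g,x]:g\in N\,\}$ is therefore a subgroup $V_{\bar x}:=\omega(N^{\ab},\bar x)\le[N,N]$, generated by the finitely many vectors $[s,x]$ with $s\in S$, and the conjugacy class of $x$ is the coset $V_{\bar x}\,x$. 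Hence $x$ and $y$ are conjugate in $N$ iff $\bar x=\bar y$ in $N^{\ab}$ and $z:=yx^{-1}\in V_{\bar x}$ (note $z\in[N,N]$ automatically once $\bar x=\bar y$). The very same description holds in every quotient $Q=N/M$ (still of class $\le 2$): when $\bar x=\bar y$ one has $\pi(y)\in[\pi(x)]_{Q}$ iff $z\in M\,V_{\bar x}$, where $\pi\colon N\to Q$ is the projection.

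\textbf{Size bounds and an effective lattice lemma.} Fix Mal'cev coordinates for $N$ adapted to the central series (Section~\ref{sec:nilpotent}) and an embedding $N\hookrightarrow UT_{d}(\Z)$. The standard estimates relating $S$-length to coordinates show that for $x,y$ in the $n$-ball the vectors $z$ and $\{[s,x]:s\in S\}$ in $[N,N]\cong\Z^{m}$ all have norm $\preceq n^{O(1)}$. The arithmetic input is then the elementary statement: if $L\le\Z^{m}$ is generated by vectors of norm $\le R$ and $z\in\Z^{m}\setminus L$ has norm $\le R$, there is a surjection $\psi\colon\Z^{m}\to A$ onto a finite abelian group with $\psi(L)=0$, $\psi(z)\ne 0$ and $|A|\preceq R^{O(m)}$; moreover $A$ may be taken of exponent $q\preceq R^{O(m)}$, so that $q\Z^{m}\subseteq\ker\psi=:W$. (This follows by passing to the isolator of $L$, Smith normal form with the Hadamard bound on the elementary divisors and on the change-of-basis matrices, and the fact that every integer $\le R$ is coprime to some prime $\preceq\log R$.) Applying this with $L=V_{\bar x}$ and $R\preceq n^{O(1)}$ we obtain $\psi, W, q$ with $|A|,q,[\,[N,N]:W\,]\preceq n^{O(1)}$, $V_{\bar x}\subseteq W$ and $z\notin W$.

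\textbf{Assembling the quotient.} Suppose first $\bar x=\bar y$. Choosing the coordinates so that $N_{q}:=N\cap\ker\!\big(UT_{d}(\Z)\to UT_{d}(\Z/q)\big)$ meets $[N,N]$ in exactly $q\Z^{m}$, we have $N_{q}\trianglelefteq N$ with $[N:N_{q}]\le|UT_{d}(\Z/q)|\preceq q^{O(1)}\preceq n^{O(1)}$, and, by the Dedekind modular law, $N_{q}V_{\bar x}\cap[N,N]=V_{\bar x}(N_{q}\cap[N,N])=V_{\bar x}+q\Z^{m}\subseteq W$ (using $V_{\bar x}\subseteq W$ and $q\Z^m\subseteq W$). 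Since $z\in[N,N]$ and $z\notin W$ we get $z\notin N_{q}V_{\bar x}$, so by the commutator description $\pi(x)$ and $\pi(y)$ are not conjugate in $Q=N/N_{q}$; in particular $\pi(y)\notin\pi([x])$. If instead $\bar x\ne\bar y$ in $N^{\ab}$, then for a suitable $q\preceq n^{O(1)}$ the images of $x,y$ in the finite abelian group $N^{\ab}/qN^{\ab}$ are distinct, hence non-conjugate, and $N\twoheadrightarrow N^{\ab}/qN^{\ab}$ works. In every case we have exhibited a finite quotient of order $\preceq n^{O(1)}$ separating the conjugacy classes of $x$ and $y$, which is exactly $\Conj_{N,S}(n)\preceq n^{\Phi(N)}$ for a suitable $\Phi(N)\in\N$.

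\textbf{Where the work is.} The conceptual step is the commutator description of conjugacy classes and its persistence under quotients; the technical heart is the effective lattice lemma with a polynomial bound whose exponent depends only on $m$ (this is where the elementary-divisor estimates enter), together with the bookkeeping identifying $N_{q}\cap[N,N]$ with $q\Z^{m}$ in adapted coordinates and the $S$-length-versus-coordinates estimates. Pinning down the \emph{exact} exponent $\Phi(N)$ — and, for the assertion that it is presumably sharp, a matching lower bound — requires tracking all of these estimates in terms of $\rank N^{\ab}$, $\rank[N,N]$ and the invariant factors of $\omega$; for the upper bound $\Conj_{N,S}(n)\preceq n^{\Phi(N)}$ it suffices that each of these quantities is polynomially bounded in $n$.
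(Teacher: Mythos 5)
Your reduction is sound as far as it goes: the description of the conjugacy class of $x$ as $V_{\bar x}\,x$ with $V_{\bar x}=\im(\psi_{\Inn(x),2})$ and its persistence in quotients is exactly Lemma \ref{nearly_twisted_conjugate_lemma} specialised to $c=2$, and separating $z=yx^{-1}$ from a sublattice of $[N,N]\cong\Z^m$ by an effective Smith-normal-form/elementary-divisor argument does yield \emph{some} polynomial bound. But this misses the actual content of Theorem \ref{two_step_conjugacy_precise}: in the paper $\Phi(N)$ is not a placeholder exponent, it is the specific invariant of Section \ref{sec:nilpotent} (the least bound on the Hirsch length of one-dimensional central quotients), and the point of the theorem is that the exponent is exactly $\Phi(N)$ — a polynomial bound with an unspecified exponent is already contained in the earlier work cited (\cite{Pengitore_1}) and in Theorem \ref{main_thm}. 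Your argument produces an exponent governed by $m$, the dimension $d$ of the unitriangular embedding (your quotient has order roughly $q^{d(d-1)/2}$), and the coordinate-versus-word-length distortion constants, and you concede in your last paragraph that you do not track it. The paper gets the sharp exponent from two ingredients you do not use: Proposition \ref{important_morphism_bound}, where the quadratic distortion of $\gamma_2(N)$ (Osin) gives $\|\im(\psi_{\Inn(x),2})\|_S\preceq\sqrt{n}$ rather than $n$, and Proposition \ref{centralsubgroup}, where the separating quotient for a central subgroup is built inside a one-dimensional central quotient of Hirsch length $\leq\Phi(N)$ (using Blackburn's Lemma \ref{blackburn_lemma} to control power subgroups), giving order $\preceq\|H\|_S^{2\Phi(N)}$; combining yields $(\sqrt n)^{2\Phi(N)}=n^{\Phi(N)}$.

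There is also a genuine rigor gap in your assembly step. The assertion that one can "choose coordinates so that $N_q=N\cap\ker\bigl(UT_d(\Z)\to UT_d(\Z/q)\bigr)$ meets $[N,N]$ in exactly $q\Z^m$" is not justified and is false verbatim in general: congruence conditions on matrix entries and divisibility of Mal'cev coordinates of $[N,N]$ differ by bounded but nontrivial denominators, so at best one gets $N_{Cq}\cap[N,N]\subseteq q\Z^m$ for a constant $C$ depending on the embedding, and this needs an argument (note that the analogous naive statement for power subgroups fails: in $H_3(\Z)$ one has $N^q\cap[N,N]=[N,N]$, not $[N,N]^q$, which is precisely why the paper invokes Blackburn's lemma). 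This particular hole is repairable, but as written the step you lean on to kill $V_{\bar x}+q\Z^m$ without killing $z$ is not proved; and even after repairing it, the resulting exponent is not $\Phi(N)$.
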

An explicit expression for $\Phi(N)$ is given in Section \ref{sec:nilpotent}. This matches the lower bound provided by \cite[Thm 1.8]{Pengitore_1}, but unfortunately we were recently made aware that there is a gap in the proof of that paper. It is still likely that this result gives a sharp upper bound, but at the moment there is no full proof for the lower bound. The purpose of this result is to both improve the results of \cite{Pengitore_1} in the context of two step nilpotent groups and to demonstrate the techniques used in the proof of the main theorem.

The next result is the main theorem of the article. In the following theorem, we give the first effective upper bound for $\varphi$-twisted conjugacy separability for nilpotent groups and the first effective upper bound for twisted conjugacy separability for any finitely generated group.
\begin{thmn}[\ref{main_thm}]
	Let $N$ be a torsion free, finitely generated nilpotent group with a finite generating subset $S$. Let $x \in N$ and $\varphi \in \Aut(N)$. Then there exist natural numbers $k_1,k_2,k_3$ such that
	$$
	\Conj_{N,x,S}^\varphi(n) \preceq \pr{\|\varphi\|_S}^{k_1} \: \pr{\|x\|_S}^{k_2} \: n^{k_3}.
	$$
	In particular, $\Conj_{N,S}^\varphi(n) \preceq \pr{\|\varphi\|_S}^{k_1} n^{k_2 + k_3}$ and $\Tconj_{N,S}(n) \preceq n^{k_1 + k_2 + k_3}$.
\end{thmn}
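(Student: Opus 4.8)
The plan is to pass to Mal'cev coordinates, rewrite the twisted conjugacy equation $z\,x\,\varphi(z)^{-1}=y$ as a system of polynomial Diophantine equations whose coefficients are controlled by $\norm{\varphi}_S$, $\norm{x}_S$ and $\norm{y}_S\le n$, and then argue by induction on the nilpotency class $c=c(N)$, reducing at each step to effective linear algebra over $\Z$. Concretely, fix a Mal'cev basis $\set{g_1,\dots,g_h}$ of $N$ adapted to the lower central series, so that elements have unique normal forms $g_1^{a_1}\cdots g_h^{a_h}$, the group operations are given by fixed integer (Hall--Petresco) polynomials, and $\norm{\cdot}_S$ is comparable to $\max_i\abs{a_i}$ up to the polynomial distortion of $N$. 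In these coordinates $\varphi$ is a polynomial map with integer coefficients of size polynomial in $\norm{\varphi}_S$, while $x$ and $y$ are integer vectors of sup-norm polynomial in $\norm{x}_S$ and in $n$ respectively. The finite quotients of $N$ that we use are the characteristic, finite-index subgroups $\Delta_m\nsub N$ attached to suitable sublattices of index polynomial in $m$; each such $\Delta_m$ is $\varphi$-invariant, and $\bar y\in[\bar x]_{\bar\varphi}$ in $N/\Delta_m$ precisely when the equation $z\,x\,\varphi(z)^{-1}y^{-1}=1$ is solvable modulo the corresponding sublattice. Hence it suffices to find $m\preceq \pr{\norm{\varphi}_S}^{k_1}\pr{\norm{x}_S}^{k_2}n^{k_3}$ such that, whenever $y\notin[x]_\varphi$ in $N$, this congruence has no solution at scale $m$.

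Now I would induct on $c$. Let $Z\nsub N$ be the last nontrivial term of the lower central series (so $Z\subseteq Z(N)$), and let $\bar N=N/Z$ with induced automorphism $\bar\varphi$, where $\norm{\bar\varphi}\preceq\norm{\varphi}_S$ and the images of $x,y$ have norm $\preceq\norm{x}_S$ and $\preceq n$. If $\bar x\not\sim_{\bar\varphi}\bar y$ in $\bar N$, then, $\bar N$ having smaller class, the inductive hypothesis supplies a finite quotient of $\bar N$ — hence of $N$ — of the required size. Otherwise $\bar x\sim_{\bar\varphi}\bar y$, and the crucial point is that a twisted-conjugating element $\bar z\in\bar N$ can be taken of word length polynomial in $\norm{\varphi}_S,\norm{x}_S,n$: a consistent twisted conjugacy equation in a f.g.\ nilpotent group admits a polynomially bounded solution, which one proves by its own induction on class via the Mal'cev coordinates, using that a consistent polynomial system of bounded degree and controlled coefficients has an integer solution of controlled size. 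Lifting $\bar z$ to $z\in N$ of controlled norm and using that $v\mapsto z^{-1}\,v\,\varphi(z)$ preserves $\varphi$-twisted conjugacy classes in $N$ and in every $\varphi$-invariant quotient, we may replace $y$ by $\tilde y:=z^{-1}\,y\,\varphi(z)$, which then satisfies $\tilde y=x\,u$ for some \emph{central} $u\in Z$ of norm polynomial in $\norm{\varphi}_S,\norm{x}_S,n$. So we are reduced to deciding whether $x\,u\in[x]_\varphi$.

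For that last step, note that $w\,x\,\varphi(w)^{-1}\in xZ$ if and only if $\bar w$ lies in the fixed subgroup $H_0$ of the automorphism $\bar\theta$ of $\bar N$ obtained by composing $\bar\varphi$ with conjugation by $\bar x$; let $H\le N$ be the preimage of $H_0$. A short computation, using that $Z$ is central, shows that $q\colon H\to Z$, $q(w)=w\,x\,\varphi(w)^{-1}x^{-1}$, is a group homomorphism, so that $x\,u\in[x]_\varphi$ if and only if $u\in\im(q)$. Generators of $H_0$ — and hence of $H$ — can be produced effectively with word length polynomial in $\norm{\varphi}_S$ and $\norm{x}_S$ (again by the effective Mal'cev machinery, applied to the lower-class group $\bar N$), so the homomorphism $q$ is given, in bases, by an integer matrix with entries polynomial in $\norm{\varphi}_S,\norm{x}_S$. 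If $u\notin\im(q)$ this is witnessed modulo a scale $m$ bounded in terms of the nonzero elementary divisors of $q$ together with the entries of $u$, so that $m\preceq\pr{\norm{\varphi}_S}^{k_1}\pr{\norm{x}_S}^{k_2}n^{k_3}$; translating back through the Mal'cev machinery yields a $\varphi$-invariant finite quotient of $N$ of order $\preceq\pr{\norm{\varphi}_S}^{k_1}\pr{\norm{x}_S}^{k_2}n^{k_3}$ separating $\bar x$ from $[\bar y]_{\bar\varphi}$. This gives the first inequality. The ``in particular'' statements are then immediate: as $y$ ranges over the $n$-ball one has $\norm{y}_S\le n$ (already used above), and if moreover $\norm{x}_S\le n$ then $\Conj_{N,S}^\varphi(n)\preceq\pr{\norm{\varphi}_S}^{k_1}n^{k_2+k_3}$; since $\Tconj_{N,S}(n)$ is the supremum of $\Conj_{N,S}^\varphi(n)$ over $\varphi$ with $\norm{\varphi}_S\le n$, we also get $\norm{\varphi}_S\le n$ there and hence $\Tconj_{N,S}(n)\preceq n^{k_1+k_2+k_3}$.

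The main obstacle is the second case of the induction: showing that a \emph{solvable} twisted conjugacy equation in a nilpotent group has a solution of polynomially bounded size. Twisted conjugacy separability only guarantees that unsolvable equations remain unsolvable in some finite quotient; it gives no control on the magnitude of solutions when they exist, so a separate effective argument is needed, and this is precisely where the polynomial — rather than exponential — nature of the bound must be earned, through uniform control of the integer solutions of the polynomial systems arising at each level of the lower central series. It is also this step, together with the linear-algebra estimate at the top level, that produces the three separate exponents $k_1,k_2,k_3$.
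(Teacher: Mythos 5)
Your overall skeleton coincides with the paper's: induct on the class, split according to whether the images are twisted conjugate modulo $\ga_{c}(N)$, use a polynomially bounded twisted-conjugator to reduce to comparing $x$ with $x\,u$ for a central $u$ of controlled norm (this conjugator bound is indeed needed and is the paper's Lemma \ref{norm}, resting on Theorem \ref{twisted_pullback_norm_bound}), and finally observe that $x\,u\sim_\varphi x$ if and only if $u$ lies in the image of the homomorphism $q=\psi_{\varphi_x,c}$. However, your last step contains a genuine gap. From a finite quotient $N/K$ in which $\pi_K(u)\notin\pi_K(\im q)$ you conclude that the twisted classes of $x$ and $x\,u$ are separated in $N/K$. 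That implication is false in general: the $\bar\varphi$-twisted class of $\pi_K(x)$ in $N/K$ is computed with \emph{all} elements of $N/K$, not just with images of elements of $H=N^{\varphi_x}_{c}$. An element $w\in N$ with $w\,x\,\varphi(w)^{-1}x^{-1}\in \ga_c(N)\,K$ but \emph{not} in $\ga_c(N)$ becomes, modulo $K$, a new twisted-centralizing element, and it contributes elements to the set $\{\bar w\,\pi_K(x)\,\bar\varphi(\bar w)^{-1}\pi_K(x)^{-1}\}$ that need not lie in $\pi_K(\im q)$. So the quotient twisted image can be strictly larger than the image of $N_x=\im q$, and your separating quotient may fail to separate the twisted conjugacy classes. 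Nothing in your elementary-divisor argument controls this discrepancy, and it is exactly here that the polynomial bound ``must be earned'' — not only in the conjugator-size estimate you flagged.

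The paper's solution to this is the content of Section \ref{sec:twisted} and Section 4, which your proposal has no substitute for: one first forces the separating quotient to be a congruence quotient $N/N^{p^{\al}}$ (Lemma \ref{congruence_subgroup_of_finite_index_subgroup}), and then passes to the deeper congruence quotient $N/N^{m}$ with $m=p^{\al+v_p(D_{\varphi_x})+k^\ast(p,c-1)}$, where the twisted analogue of Blackburn's root-extraction lemma (Proposition \ref{lemma_2_blackburn_generalization}) shows that every ``new'' twisted element of the deeper quotient projects, in $N/N^{p^{\al}}$, into $\pi_{p^{\al}}(N_x)$ (Corollary \ref{twisted_central_pullback_matrix_reduction}); this is what makes $\pi_m(x\,u)\notin[\pi_m(x)]_{\bar\varphi}$ legitimate. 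The extra index paid, $p^{k^\ast(p,c-1)+v_p(D_{\varphi_x})}$, must then itself be bounded polynomially in $\|\varphi\|_S$ and $\|x\|_S$, which requires introducing the twisted determinant and bounding it by the norm of the automorphism (Corollaries \ref{twisted_determinant_bound} and \ref{lemma_2_blackburn_generalization_bound}). Your proposal would be repaired by inserting this mechanism (or an equivalent quantitative statement that the twisted image subgroup of a congruence quotient agrees with the image of $N_x$ after a controlled loss of index); as written, the ``translating back through the Mal'cev machinery'' sentence asserts precisely the statement that is hardest to prove.
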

The proof of Theorem \ref{main_thm} generalizes the techniques of \cite{Blackburn} and \cite{Pengitore_1} to the context of $\varphi$-conjugacy classes by introducing the notion of the $i$-th twisted centralizer corresponding to an automorphism $\varphi$. When $\varphi = \Inn(x)$, then the $i$-th twisted centralizer is equal to the group of elements that centralize $x$ modulo the $i$-th term of the lower central series. Using these twisted centralizers, we may proceed by induction on step length.

More generally, we have a similar result as Theorem \ref{main_thm} for virtually nilpotent groups.
\begin{thmn}[\ref{virtual_upper_bound}]
Suppose that $G$ is a virtually nilpotent group, and suppose that $S$ is a finite generating subset of $G$. For $\varphi \in \Aut(G)$ and $x \in G$, there exist natural numbers $k_1,k_2,k_3$ such that
$$
\Conj_{G,x,S}(n) \preceq \pr{\|\varphi\|_S}^{k_1} \: \pr{\|x\|_S}^{k_2} \: n^{k_3}.
$$
In particular, $\Conj_{G,S}^\varphi(n) \preceq \pr{\|\varphi\|_S}^{k_1} n^{k_2 + k_3}$ and $\Tconj_{G,S}(n) \preceq n^{k_1 + k_2 + ks_3}$.
\end{thmn}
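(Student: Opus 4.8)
The plan is to bootstrap from Theorem~\ref{main_thm} through a twisted version of the extension argument behind Theorem~\ref{ext_twisted_conj}. First I would fix, once and for all, a characteristic subgroup $N \leq G$ of finite index $r := [G:N]$ that is torsion free nilpotent; such an $N$ exists because $G$ is finitely generated virtually nilpotent -- pass to a characteristic finite-index nilpotent subgroup $H$ (the intersection of all subgroups of a suitable finite index), note its torsion subgroup $T$ is finite and characteristic in $G$, choose by residual finiteness a characteristic finite-index $K \leq G$ with $K \cap T = 1$, and set $N := H \cap K$. Fix also a finite generating set $S'$ of $N$ and coset representatives $g_1,\dots,g_r$ of $N$ in $G$ of $S$-norm bounded in terms of $r$. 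Since $N$ has finite index in $G$ it is undistorted, so $\|w\|_{S'} \preceq \|w\|_S$ for $w \in N$. As $N$ is characteristic, each $\varphi \in \Aut(G)$ fixes $N$, hence induces $\bar\varphi \in \Aut(G/N)$ on the finite group $G/N$, restricts to $\varphi|_N \in \Aut(N)$, and gives $\psi_j := (\mathrm{conj}_{g_j}) \circ (\varphi|_N) \in \Aut(N)$ for each $j$; a short computation with the metric comparison shows $\|\varphi|_N\|_{S'} \preceq \|\varphi\|_S$ and $\|\psi_j\|_{S'} \preceq \|\varphi\|_S$.

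The heart of the argument is a structural description of $\varphi$-twisted conjugacy in $G$ in terms of twisted conjugacy in $N$, exactly parallel to the Blackburn-type analysis behind Theorem~\ref{ext_twisted_conj}. Reduction modulo $N$ shows $x \sim_\varphi y$ in $G$ forces $\bar x \sim_{\bar\varphi} \bar y$ in $G/N$; when that fails, the quotient $G \to G/N$ of order $r$ already separates $x$ from $[y]_\varphi$. Otherwise, choose a bounded-norm representative $z_0 \in G$ of an element realizing $\bar x \sim_{\bar\varphi}\bar y$ and replace $x$ by $x' := z_0\, x\, \varphi(z_0)^{-1}$: then $[x']_\varphi = [x]_\varphi$, $\bar{x'} = \bar y =: \bar g_j$, and $\|x'\|_S \preceq \|x\|_S + \|\varphi\|_S$. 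Writing $x' = a g_j$ and $y = b g_j$ with $a,b \in N$ (so $\|a\|_{S'} \preceq \|x\|_S + \|\varphi\|_S$, $\|b\|_{S'} \preceq \|y\|_S$), any $z$ effecting $x' \sim_\varphi y$ must have image in the $\bar\varphi$-twisted centralizer of $\bar g_j$ in $G/N$, a set of size $m \leq r$; taking representatives $c_1,\dots,c_m$ for it, a direct computation shows $x' \sim_\varphi y$ in $G$ if and only if, for some $k \leq m$, the element $a_k := (\mathrm{conj}_{c_k^{-1}})(a\,\epsilon_k^{-1}) \in N$ is $\psi_j$-twisted conjugate to $b$ in $N$, where $\epsilon_k \in N$ is the $N$-part of $c_k g_j \varphi(c_k)^{-1}$, so that $\|a_k\|_{S'} \preceq \|x\|_S + \|\varphi\|_S$. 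The same computation yields the ``modulo $M'$'' version: for any $M' \trianglelefteq G$ with $M' \leq N$ and $\pi : G \to G/M'$, one has $\pi(x) \in \pi([y]_\varphi)$ if and only if $\bar x \sim_{\bar\varphi}\bar y$ and $a_k \equiv n\,b\,\psi_j(n)^{-1} \pmod{M'}$ for some $k \leq m$, $n \in N$. Crucially $m \leq r$, the norms of $z_0, g_j, c_k$ are bounded purely in terms of $r$, and $\|\psi_j\|_{S'} \preceq \|\varphi\|_S$, all uniformly in $\varphi$.

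With this in hand, suppose $x \not\sim_\varphi y$ with $\|x\|_S, \|y\|_S \leq n$ and $\bar x \sim_{\bar\varphi}\bar y$ (the other case handled above). Then $a_k \not\sim_{\psi_j} b$ in $N$ for all $k \leq m$, so Theorem~\ref{main_thm} furnishes finite quotients $\pi_k : N \to Q_k$ with $\pi_k(a_k) \notin \pi_k([b]_{\psi_j})$ and
$$
|Q_k| \;\preceq\; \pr{\|\psi_j\|_{S'}}^{k_1}\pr{\|a_k\|_{S'}}^{k_2}\pr{\|b\|_{S'}}^{k_3} \;\preceq\; \pr{\|\varphi\|_S}^{k_1}\pr{\|x\|_S + \|\varphi\|_S}^{k_2}\pr{\|y\|_S + \|\varphi\|_S}^{k_3}.
$$
Put $M := \bigcap_{k \leq m} \ker\pi_k \leq N$ and $M' := \bigcap_{g \in G} g M g^{-1} \trianglelefteq G$; then $M' \leq N$, $M' \leq \ker\pi_k$ for each $k$, and $[G:M'] \leq r\,[N:M]^{r} \leq r\pr{\prod_{k \leq m}|Q_k|}^{r}$. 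Since $M' \leq \ker\pi_k$ for each $k$, a congruence $a_k \equiv n\,b\,\psi_j(n)^{-1} \pmod{M'}$ would force $\pi_k(a_k) \in \pi_k([b]_{\psi_j})$, contrary to hypothesis; so no such $n$ exists for any $k$, and by the structural description $G/M'$ separates $x$ from $[y]_\varphi$. As $m \leq r$, collecting exponents in the displayed bounds gives $\Conj_{G,x,S}^\varphi(n) \preceq \pr{\|\varphi\|_S}^{k_1}\pr{\|x\|_S}^{k_2} n^{k_3}$ for suitable constants; specializing to $\|x\|_S \leq n$ gives $\Conj_{G,S}^\varphi(n) \preceq \pr{\|\varphi\|_S}^{k_1} n^{k_2 + k_3}$, and maximizing over $\|\varphi\|_S \leq n$ gives $\Tconj_{G,S}(n) \preceq n^{k_1 + k_2 + k_3}$.

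The main obstacle is the structural description of the second paragraph together with its uniform norm bookkeeping: verifying that only $m \leq r$ pairs arise, that the single automorphism $\psi_j$ has $\|\psi_j\|_{S'} \preceq \|\varphi\|_S$, that $a_k$ and $b$ have $S'$-norm polynomially bounded in $\|x\|_S, \|y\|_S, \|\varphi\|_S$, and that passing to the common normal core $M'$ does not destroy separation -- each routine on its own, but all implied constants must stay uniform in $G$ and independent of $\varphi, x, y$. The existence of a characteristic, finite-index, torsion free nilpotent $N \leq G$ sketched above, and the bi-Lipschitz comparison of $\|\cdot\|_{S'}$ with the restriction of $\|\cdot\|_S$, are standard.
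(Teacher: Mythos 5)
Your argument is correct in substance, but it takes a noticeably different route from the paper. The paper first proves a general extension result (Theorem \ref{ext_twisted_conj}): via Proposition \ref{conjugacy_union} it writes $[x]_\varphi$ as a union of $[G:N]$ right translates $[1_N]_{f_{x,i}\circ\bar\varphi}\cdot x_i$ of twisted conjugacy classes of the \emph{identity}, and then combines three lemmas on the depth function $\Farb$ (invariance under right translation, behaviour under finite unions, and passage from $N$ to $G$ via an induced homomorphism into a larger finite group, costing the exponent $[G:N]$). Theorem \ref{main_thm} is then applied at the base point $1$ with the automorphisms $f_{x,i}\circ\bar\varphi$, so the whole $\|x\|_S$-dependence enters through the bound $\|f_{x,i}\circ\bar\varphi\|_{S'}\preceq \|x\|_S(\|\varphi\|_S)^2$. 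You instead match cosets modulo $N$, use that any $z$ realizing the twisted conjugacy must project into the $\bar\varphi$-twisted stabilizer of $\bar g_j$ in $G/N$ (a set of at most $[G:N]$ cosets), and thereby reduce to at most $[G:N]$ twisted-conjugacy conditions in $N$ for the single automorphism $\psi_j=\mathrm{conj}_{g_j}\circ\varphi|_N$ with modified base elements of norm $\preceq \|x\|_S+\|\varphi\|_S$; you then pass to $G$ by taking the normal core of the intersection of the kernels. That core step is legitimate with polynomial cost precisely because each kernel is normal in $N$ and $N$ is normal in $G$, so the core is an intersection of only $[G:N]$ conjugates, giving $[G:M']\leq [G:N]\,[N:M]^{[G:N]}$ --- a clean alternative to the paper's ``finite groups are linear'' induced-quotient lemma, and it avoids the translation/union formalism entirely. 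Your version keeps $\|\psi_j\|_{S'}\preceq\|\varphi\|_S$ and pushes the $\|x\|_S$-dependence into the elements rather than the automorphisms; both yield the same polynomial shape. Two small bookkeeping slips to fix: with $z=nc_k$ the element arising is $c_k a c_k^{-1}\epsilon_k$ (not $c_k^{-1}(a\epsilon_k^{-1})c_k$), and the congruence you derive modulo $M'$ places $b$ in the image of the $\psi_j$-class of that element, so you should choose $\pi_k$ separating $b$ from $[c_k a c_k^{-1}\epsilon_k]_{\psi_j}$ (apply Theorem \ref{main_thm} with that base point and the element $b$), rather than the other way around; the bound has the same form, so nothing in the estimates changes.
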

For this result, we write the $\varphi$-twisted conjugacy class of an element of $G$ as a finite union of right translates of twisted conjugacy classes of a finite index, characteristic finitely generated nilpotent subgroup. We then apply Theorem \ref{main_thm}.

The last result of this article extends the work of the second author in \cite{Pengitore_1} to the context of finite extensions of nilpotent groups.
\begin{thmn}[\ref{last_main_result}]
	Let $G$ be a virtually nilpotent group with a finite generating subset $S$, and let $x \in G$. There exist $k_1, k_2 \in \N$ such that $\Conj_{G,x,S}(n) \preceq \pr{\|x\|_S}^{k_1} n^{k_2}$. In particular, $\Conj_{G,S}(n) \preceq n^{k_1 + k_2}$. If $G$ is not virtually abelian, then
	$$
	n^{(c(N) - 1) \: (c(N) + 1)} \preceq \Conj_{G,S}(n) \preceq n^{k_1 + k_2}
	$$
	where $N$ is any infinite finitely generated nilpotent subgroup of finite index in $G$.
\end{thmn}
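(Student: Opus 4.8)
The plan is to obtain the upper bound as an immediate consequence of Theorem~\ref{virtual_upper_bound}. Applying that theorem to the automorphism $\varphi = \id_G$ and observing that $\|\id_G\|_S$ is a constant depending only on the pair $(G,S)$, the factor $(\|\id_G\|_S)^{k_1}$ is absorbed by $\preceq$; this produces $k_1,k_2 \in \N$ with
$$\Conj_{G,x,S}(n) = \Conj_{G,x,S}^{\id}(n) \preceq (\|x\|_S)^{k_1}\, n^{k_2}.$$
For the ``in particular'' statement, $\Conj_{G,S}(n)$ is the maximum of $\Conj_{G,x,S}(n)$ over all $x$ in the $n$-ball, and for such $x$ one has $\|x\|_S \le n$; substituting this bound into the displayed estimate gives $\Conj_{G,S}(n) \preceq n^{k_1+k_2}$.

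For the lower bound, suppose $G$ is not virtually abelian and fix a finite-index finitely generated nilpotent subgroup $N \le G$ (for instance the one named in the statement) together with a finite generating set $S'$ of $N$. Since finite-index subgroups are undistorted, $\|\cdot\|_{S'} \approx \|\cdot\|_S$ on $N$, and since $N$ is not abelian (otherwise $G$ would be virtually abelian) we have $c := c(N) \ge 2$. The first structural step is a restriction principle: if $x,y \in N$ are not conjugate in $G$ --- hence, a fortiori, not conjugate in $N$ --- and $\pi \colon G \twoheadrightarrow Q$ is a finite quotient with $\pi(y) \notin \pi([x]_G)$, then $[x]_N \subseteq [x]_G$ forces $\pi(y) \notin \pi([x]_N)$, so $\pi|_N \colon N \twoheadrightarrow \pi(N)$ is a finite quotient of $N$ of order at most $|Q|$ that separates the $N$-conjugacy classes of $x$ and $y$. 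Consequently the minimal order of a finite quotient of $G$ separating $[x]_G$ from $[y]_G$ is at least the minimal order of a finite quotient of $N$ separating $[x]_N$ from $[y]_N$.

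The second step is to feed into this the lower bound for conjugacy separability of the finitely generated nilpotent group $N$ from \cite{Pengitore_1}, which (for a suitable sequence of scales $m$) supplies elements $x_m,y_m \in N$ with $\|x_m\|_{S'},\|y_m\|_{S'} \preceq m$, not conjugate in $N$, and such that every finite quotient of $N$ separating their $N$-conjugacy classes has order $\succeq m^{(c-1)(c+1)}$. The point requiring attention is that the restriction principle needs $x_m \not\sim_G y_m$, not merely $x_m \not\sim_N y_m$: writing $[x_m]_G \cap N = \bigcup_{j=1}^{r}[z_j]_N$ as a union of at most $r = [G:N]$ many $N$-conjugacy classes with $z_1 = x_m$, one needs $y_m \notin [z_j]_N$ for all $j$, which should follow from the freedom in the choice of the witnesses in \cite{Pengitore_1}, there being only finitely many classes $[z_2]_N,\dots,[z_r]_N$ to avoid at each scale. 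Combining this with the restriction principle, the equivalence $\|\cdot\|_{S'} \approx \|\cdot\|_S$ on $N$, and the identification $n \approx m$ then yields $n^{(c-1)(c+1)} \preceq \Conj_{G,S}(n)$.

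I expect the main obstacle to be precisely this last adjustment: the witnesses of \cite{Pengitore_1} are engineered to certify non-conjugacy \emph{in $N$} and encode delicate divisibility conditions, so one must re-enter that construction and verify that it can simultaneously be chosen to avoid the finitely many extra classes $[z_j]_N$ coming from the cosets of $N$ in $G$ without degrading the exponent $(c-1)(c+1)$. A secondary, essentially bookkeeping, matter is to keep track of the word lengths of the (possibly modified) witnesses under the inclusion $N \le G$, and to note that the exponent attached to $N$ is the one claimed in the statement --- the nilpotency class may differ between finite-index nilpotent subgroups of $G$ when $G$ has torsion, but $\Conj_{G,S}$ is not monotone under finite-index subgroups, which is exactly why the bound is consistent for every admissible choice of $N$.
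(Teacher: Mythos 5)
Your upper bound is exactly the paper's argument: apply Theorem \ref{virtual_upper_bound} with $\varphi=\id$, absorb $\|\id\|_S$ into the constant, and for the ``in particular'' statement use $\|x\|_S\leq n$. Your restriction principle is also the paper's Proposition \ref{conj_finite_ext}. The problem is the lower bound, where you have correctly located the crucial point --- one needs witnesses $a_t,b_t\in N$ that are non-conjugate \emph{in $G$}, not merely in $N$, while still satisfying $D_N([a_t]_{\id},b_t)\succeq n_t^{(c-1)(c+1)}$ --- but you do not actually resolve it. Saying that it ``should follow from the freedom in the choice of the witnesses in \cite{Pengitore_1}'' is not a proof, and it is a genuinely delicate request: you would have to re-open that construction and show that its divisibility constraints are compatible with avoiding up to $[G:N]-1$ additional $N$-conjugacy classes at every scale, which is exactly the part of the theorem that goes beyond quoting \cite{Pengitore_1} (and recall the paper itself warns that the lower-bound argument of \cite[Thm 1.8]{Pengitore_1} has a gap, so importing its witnesses as a black box is doubly risky).

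The paper closes this gap with a self-contained pigeonhole construction rather than by modifying \cite{Pengitore_1}. Choose $x\in\gamma_{c(N)-1}(N)$, $y\in N$ and a primitive $z\in Z(N)$ with $[x,y]=z^k$, so that $\langle x,y,z^k\rangle\cong H_3(\Z)$, and for primes $p_t>\max\{k,[G:N]\}$ set $\alpha_{t,i}=x^{p_t}z^{ik}$ for $1\leq i\leq [G:N]+1$. These are pairwise non-conjugate in $N$ (they are distinct central elements in a suitable finite quotient), and the key estimate is proved \emph{simultaneously for every pair}: if $\pi\colon N\twoheadrightarrow Q$ has $|Q|<p_t^{c(N)+1}$, then either $\pi(z^k)=1$, or $Q$ may be taken to be a $q$-group with $q\neq p_t$ (the case $q=p_t$ forces $|Q|\geq p_t^{c(N)+1}$ because $\pi(z)\neq 1$ forces $c(Q)=c(N)$), and then Proposition \ref{heisenb_conj_class} applied inside $\langle x,y,z^k\rangle$ conjugates $\pi(\alpha_{t,i})$ to $\pi(x^{p_t})$ for every $i$. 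Since $[\alpha_{t,1}]_{\id}^G\cap N$ is a union of at most $[G:N]$ conjugacy classes of $N$, among the $[G:N]+1$ elements there is some $i_0$ with $\alpha_{t,1}\nsim_{\id}\alpha_{t,i_0}$ in $G$; take $a_t=\alpha_{t,1}$, $b_t=\alpha_{t,i_0}$, use $\|a_t\|_S\approx\|b_t\|_S\approx p_t^{1/(c(N)-1)}$ (distortion of $\gamma_{c(N)-1}(N)$, \cite[Lem 3.B]{Gromov}) and Proposition \ref{conj_finite_ext} to conclude. Your proposal, as written, stops short of any such argument, so the lower bound is not established.
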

In order to prove the upper bound of Theorem \ref{last_main_result}, we apply Theorem \ref{virtual_upper_bound}. For the lower bound, we follow the methods of \cite[Thm 1.8]{Pengitore_1} using the fact that every conjugacy class in $G$ is the union of at most $[G:N]$ conjugacy classes in $N$.

We finish by working out asymptotic upper bounds for $\Conj_{H_3(\Z)}^\varphi(n)$ for any $\varphi \in \Aut(H_3(\Z))$ where $H_3(\Z)$ is the $3$-dimensional integral Heisenberg group. We also work out a $5$-dimension example with a fixed automorphism.
\paragraph{Acknowledgements}

We would like to thank Karel Dekimpe for his suggestion to study twisted conjugacy classes in the context of finitely generated nilpotent groups. The second author would like to thank his advisor Ben McReynolds for his continued support.
\section{Background}

In this section, we introduce necessary definitions for this paper and start by fixing some notation. 

Let $G$ be a group with finite generating subset $S$. The order of a finite group $G$ is denoted as $\vert G \vert$. We write $\|x\|_S$ to be the word length of $x$ with respect to $S$ and denote the identity element of $G$ as $1$. We denote the order of $g$ as an element of the group $G$ as $\Ord_G(g)$. We define the commutator of $x, y \in G$ as $[x,y] = x \: y \: x^{-1} \: y^{-1}$. For any subset $X \subset G$, we let $\langle X \rangle$ be the subgroup generated by the set $X$. For a normal subgroup $H \nsub G$, we set $\map{\pi_H}{G}{G/H}$ to be the natural projection and sometimes write $\bar{x} = \pi_H(x)$ if the normal subgroup $H$ is clear from the context. 

For any $x \in G$, we let $\Inn(x) \in \Aut(G)$ be the associated inner automorphism i.\@e.\@ $\Inn(x)(y) = x \: y \: x^{-1}.$ For an integer $k$ and prime $p$, we define $v_p(k)$ to be the largest natural number such that $p^{v_p(k)}$ divides $k$. 

We define $G^m$ to be the subgroup generated by $m$-th powers of elements in $G$ which is a characteristic subgroup. We define the associated projection as $r_m = \pi_{G^m}$.  We also define the abelianization of $G$ as $G_\text{ab} = G / [G,G]$ with the associated projection $\pi_{\text{ab}} = \pi_{[G,G]}$. We define  the center of $G$ as $Z(G)$ and the centralizer of $x$ in $G$ as $C_G(x)$.

\subsection{Norms of subgroups and automorphisms}

We associate a norm to finitely generated subgroups in the following definition.
\begin{defn}
	\label{def_normsubgroup}
	Let $G$ be a finitely generated group with a finite generating subset $S$. For any finite subset $X \subseteq G$, we define $\|X\|_S = \max\{\|x\|_S \: | \: x \in X\}$. For a finitely generated subgroup $H \leq G$, we define
	$$
	\|H\|_S = \text{min}\{\|X\|_S \: | \: X \text{ is a finite generating subset for } H \}.
	$$
\end{defn}

Let $S_1$ and $S_2$ be two generating subsets of a group $G$ such thath $\vert s \vert_{S_2} \leq K$ for all $s \in S_1$, or equivalently, such that $\Vert x \Vert_{S_2} \leq K \Vert x \Vert_{S_1}$ for all $x \in G$. Then $\Vert H \Vert_{S_2} \leq K \Vert H \Vert_{S_1}$ for all finitely generated subgroups $H \le G$.
Indeed, take generators $h_i \in H$ for $H$ such that $\vert h_i \vert_{S_1} \leq \Vert H \Vert_{S_1}$, then $\vert h_i \vert_{S_2} \leq K \vert h_i \vert_{S_1} \leq K \Vert H \Vert_{S_1}$. Since these elements generate $H$, the statement follows.
In particular, we get the following relation between norms of subgroups for different generating subsets.

\begin{lemma}
	Let $G$ be a finitely generated group with finite generating subsets $S_1$ and $S_2$, and let $H \leq G$ be a finitely generated subgroup. Then $\frac{1}{C} \:  \| H \|_{S_1} \leq \|H\|_{S_2} \leq C \: \|H \|_{S_1}$ for some $C > 0$.
\end{lemma}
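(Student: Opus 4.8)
The plan is to deduce this directly from the observation recorded immediately before the statement. First I would note that since $S_1$ is a finite generating subset, the quantity $K_{12} := \max\{\|s\|_{S_2} : s \in S_1\}$ is a well-defined nonnegative integer, and in fact we may take it positive (if some generator had $S_2$-length $0$ it would be trivial and could be discarded). Writing an arbitrary $x \in G$ as a product of $\|x\|_{S_1}$ letters from $S_1 \cup S_1^{-1}$ and replacing each letter by an $S_2$-word of length at most $K_{12}$ yields $\|x\|_{S_2} \le K_{12}\,\|x\|_{S_1}$ for all $x \in G$. Symmetrically, with $K_{21} := \max\{\|s\|_{S_1} : s \in S_2\}$ one gets $\|x\|_{S_1} \le K_{21}\,\|x\|_{S_2}$ for all $x \in G$.

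Next I would invoke the paragraph preceding the lemma: it shows that whenever $\|x\|_{S_2} \le K\,\|x\|_{S_1}$ holds for all $x \in G$, then $\|H\|_{S_2} \le K\,\|H\|_{S_1}$ for every finitely generated subgroup $H \le G$. (Choose a finite generating set $\{h_i\}$ of $H$ with $\|h_i\|_{S_1} \le \|H\|_{S_1}$; then $\|h_i\|_{S_2} \le K\,\|h_i\|_{S_1} \le K\,\|H\|_{S_1}$, so $\{h_i\}$ witnesses the bound.) Applying this with $K = K_{12}$ gives $\|H\|_{S_2} \le K_{12}\,\|H\|_{S_1}$, and applying it with the roles of $S_1$ and $S_2$ interchanged and $K = K_{21}$ gives $\|H\|_{S_1} \le K_{21}\,\|H\|_{S_2}$.

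Finally I would set $C := \max\{K_{12}, K_{21}, 1\}$, a positive real number. The two inequalities just obtained then give $\|H\|_{S_2} \le C\,\|H\|_{S_1}$ and, rearranging the other one, $\frac{1}{C}\,\|H\|_{S_1} \le \|H\|_{S_2}$, which is exactly the claimed two-sided bound. There is essentially no obstacle here; the only minor points are the degenerate case $H = \{1\}$ (where $\|H\|_{S_i} = 0$ and the inequalities hold trivially) and choosing $C \ge 1$ so that the asserted constant is genuinely positive.
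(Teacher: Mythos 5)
Your proposal is correct and is essentially the paper's own argument: the paper proves the lemma via the paragraph immediately preceding it (choosing generators $h_i$ of $H$ with $\|h_i\|_{S_1} \leq \|H\|_{S_1}$ and rewriting them over $S_2$), applied symmetrically in both directions, just as you do. Setting $C$ to be the larger of the two conversion constants is exactly the intended conclusion.
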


Similarly, we define a norm for morphisms of finitely generated groups, and subsequently, define a norm for automorphisms of a finitely generated group. 

\begin{defn}
	\label{def_normauto}
	Let $G$ and $G'$ be finitely generated groups with finite generating subsets $S$ and $S'$. Let $\map{\varphi}{G}{G'}$ be a group morphism. We define
	$$
	\| \varphi\|_{S,S'} = \max\{\|\varphi(s) \|_{S'} \: | \: s \in S \}.
	$$
	If $\varphi$ is an automorphism of $G$, then we assume $S = S'$ and write $\| \varphi \|_S$.
\end{defn}

Equivalently, $\Vert \varphi \Vert_{S,S^\prime}$ is the smallest natural number such that $\Vert \varphi(x) \Vert_{S^\prime} \leq \Vert \varphi \Vert_{S, S^\prime} \Vert x \Vert_S$ for all $x \in G$.

Let $S_1$ and $S_2$ be two generating subsets of $G$ such that $\Vert s \Vert_{S_2} \leq K$ for all $s \in S_1$. We show that in this case, $\Vert \varphi \Vert_{S_1,S^\prime} \leq K \Vert \varphi \Vert_{S_2,S^\prime}$. Note that $$\Vert \varphi(x) \Vert_{S^\prime} \leq \Vert \varphi \Vert_{S_2, S^\prime} \Vert x \Vert_{S_2} \leq K \Vert \varphi \Vert_{S_2,S^\prime} \Vert x \Vert_{S_1}$$ for all $x \in G$, and thus, the conclusion follows. 

Similarly, one can show for $S_1^\prime$ and $S_2^\prime$ generating subsets for $G^\prime$ with $\Vert s \Vert_{S_2^\prime} \leq K$ for all $s \in S_1^\prime$, that $$\Vert \varphi \Vert_{S,S_1^\prime} \leq K \Vert \varphi \Vert_{S,S_2^\prime}.$$

In particular, we have the following lemma.

\begin{lemma}Let $G$ be a group with two finite generating subsets $S, S^\prime$. There exists a constant $C > 0$ such that for every automorphism $\varphi \in \Aut(G)$, we have $\frac{1}{C} \Vert \varphi \Vert_{S^\prime} \leq \Vert \varphi \Vert_S \leq C \Vert \varphi \Vert_{S^\prime}.$
\end{lemma}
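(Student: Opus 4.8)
The plan is to reduce the statement to the two one-sided inequalities for mixed morphism norms recorded immediately before the lemma, applied in succession. First I would observe that, since $S$ and $S'$ are both finite generating subsets of the same group $G$, there is a single natural number $K$ with $\|s\|_{S'}\le K$ for every $s\in S$ and $\|s'\|_{S}\le K$ for every $s'\in S'$; equivalently, $\|x\|_{S'}\le K\,\|x\|_S$ and $\|x\|_S\le K\,\|x\|_{S'}$ for all $x\in G$. Crucially, this $K$ depends only on the two generating sets, not on any automorphism.

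Next I would chain the two displayed inequalities preceding the statement. Taking $S_1=S$ and $S_2=S'$ in the first one gives $\|\varphi\|_{S,S}\le K\,\|\varphi\|_{S',S}$; taking $S_1'=S$ and $S_2'=S'$ in the second gives $\|\varphi\|_{S',S}\le K\,\|\varphi\|_{S',S'}$. Combining, $\|\varphi\|_S=\|\varphi\|_{S,S}\le K^2\,\|\varphi\|_{S',S'}=K^2\,\|\varphi\|_{S'}$. Running the same argument with the roles of $S$ and $S'$ interchanged yields $\|\varphi\|_{S'}\le K^2\,\|\varphi\|_S$, so the lemma holds with $C=K^2$.

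The only point that needs a word of care is the uniformity of $C$ in $\varphi$: the constant $K$, and hence $C=K^2$, is extracted purely from the finite comparison of $S$ with $S'$ and never sees the automorphism, so a single $C$ works simultaneously for all $\varphi\in\Aut(G)$. There is no genuine obstacle here; the argument is a two-line consequence of the preceding paragraphs and could even be folded into them. Alternatively, if a self-contained proof is preferred, I would argue directly: for any $x\in G$ one has $\|\varphi(x)\|_S\le K\,\|\varphi(x)\|_{S'}\le K\,\|\varphi\|_{S'}\,\|x\|_{S'}\le K^2\,\|\varphi\|_{S'}\,\|x\|_S$, and specializing to $x\in S$ together with the definition of $\|\varphi\|_S$ gives $\|\varphi\|_S\le K^2\,\|\varphi\|_{S'}$, the reverse inequality following by symmetry.
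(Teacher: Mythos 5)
Your proof is correct and follows essentially the same route as the paper, which derives the lemma precisely by combining the two change-of-generating-set inequalities for the mixed norms $\Vert\varphi\Vert_{S_1,S_2}$ established in the paragraphs immediately preceding the statement (your symmetric choice of $K$ even smooths over the slightly garbled hypothesis in the paper's second displayed comparison). Nothing further is needed.
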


\subsection{Finitely generated groups and separability}
Let $G$ be a finitely generated group with a finite generating subset $S$, and let $X \subseteq G$ be an arbitrary subset. Following Bou-Rabee in \cite{Bou_Rabee10}, we define the relative depth function $$\map{D_G(X, \cdot \hspace{1mm})}{G \setminus   X}{\N \cup \{\infty\}}$$ as $$ \label{def_reldepfun}
\D_{G}(X,y) = \text{min}\{|Q| \suchthat \surj{\pi}{G}{Q}, \hspace{1mm} |Q| < \infty, \text{ and } \pi(y) \notin \pi(X) \}
$$
with the understanding that $D_G(X,y) = \infty$ if no such $Q$ exists.
\begin{defn}
We say that a non-empty subset $X \leq G$ is \emph{separable} if $D(X,y) < \infty$ for all $y \in G \setminus  X$. We say that a finite quotient $G/H$ \emph{separates} $X$ and $y$ if $\pi_H(y) \notin \pi_H(X)$.
\end{defn}

Recall the twisted conjugacy class $[x]_\varphi$ for $x \in G$ and $\varphi \in \Aut(G)$ from Section \ref{intro}.
\begin{defn}
	Let $G$ be a finitely generated group with and let $\map{\varphi}{G}{G}$ be an automorphism. We say that $G$ is \emph{$\varphi$-twisted conjugacy separable} if for all $x \in G$ the twisted conjugacy class $[x]_\varphi$ is separable. We say that $G$ is \emph{twisted conjugacy separable} if $G$ is $\varphi$-twisted conjugacy separable for all $\varphi \in \Aut(G)$.
\end{defn}

Let $G$ be a finitely generated group with a finite generating subset $S$, and let $\map{\varphi}{G}{G}$ be an automorphism. To quantify $\varphi$-twisted conjugacy separability relative to $x$ for some fixed $x \in N$, we define the following function $\map{\Conj_{G,x,S}^\varphi(n)}{\N}{\N}$ given by
$$
\Conj_{G,x,S}^{\varphi}(n) = \max\{ D([x]_\varphi,y) \: | \: \|y\|_S \leq n, y \notin [x]_\varphi \}.
$$
By allowing $x$ to vary, we are able to quantify $\varphi$-twisted conjugacy separability for any given group $G$ and automorphism $\varphi$. We define $\map{\Conj^\varphi_{G,S}}{\N}{\N}$ to be given by $$\Conj^\varphi_{G,S}(n) = \max\{ \Conj_{G,x,S}^\varphi(n) \: | \: \|x\|_S \leq n \}. $$

Finally, we obtain a method to quantify twisted conjugacy separability by taking automorphisms with norm at most $n$. We now define $\map{\Tconj}{\N}{\N}$ to be given by $$\Tconj_{G,S}(n) =  \max \{\Conj_{G,S}^\varphi(n) \: | \: \varphi \in \Aut(G) \text{ and } \|\varphi\|_S \leq n \}.$$

\begin{lemma}
	Let $S_1$ and $S_2$ be two finite generating subsets of $G$. If $\varphi \in \Aut(G)$, then $\Conj_{G,S_1}^\varphi(n) \approx \Conj_{G,S_2}^\varphi(n)$. Similarly, $\Tconj_{G,S_1}(n) \approx \Tconj_{G,S_2}(n)$. 
\end{lemma}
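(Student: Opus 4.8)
The key observation is that the relative depth function $\D_G(X,y)$ is defined purely in terms of finite quotients of $G$ and does not reference any generating set; only the constraints $\|y\|_S \le n$, $\|x\|_S \le n$, and $\|\varphi\|_S \le n$ appearing in the definitions of $\Conj_{G,S}^\varphi$ and $\Tconj_{G,S}$ are generating-set dependent. So the proof is entirely a matter of tracking how these constraints deform under a change of generating set, using the bi-Lipschitz comparison of word lengths and the analogous comparison for automorphism norms established in the previous lemma.

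Concretely, I would first fix a constant $K > 0$ such that $\|x\|_{S_2} \le K \, \|x\|_{S_1}$ and $\|x\|_{S_1} \le K \, \|x\|_{S_2}$ for all $x \in G$, which exists since $S_1$ and $S_2$ are both finite. For fixed $x$ and $\varphi$, the inclusion of balls $\{y : \|y\|_{S_1} \le n\} \subseteq \{y : \|y\|_{S_2} \le Kn\}$ immediately gives $\Conj_{G,x,S_1}^\varphi(n) \le \Conj_{G,x,S_2}^\varphi(Kn)$, because the set over which we take the maximum only gets larger while the quantity $\D_G([x]_\varphi,y)$ being maximized is unchanged. Taking the maximum over $x$ with $\|x\|_{S_1} \le n$, and noting that such $x$ satisfy $\|x\|_{S_2} \le Kn$, yields $\Conj_{G,S_1}^\varphi(n) \le \Conj_{G,S_2}^\varphi(Kn) \le K \, \Conj_{G,S_2}^\varphi(Kn)$, i.e.\ $\Conj_{G,S_1}^\varphi(n) \preceq \Conj_{G,S_2}^\varphi(n)$; the reverse inequality follows by symmetry, giving $\approx$.

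For the statement about $\Tconj$, I would additionally invoke the earlier lemma to fix a constant $C > 0$ with $\|\varphi\|_{S_2} \le C \, \|\varphi\|_{S_1}$ and $\|\varphi\|_{S_1} \le C \, \|\varphi\|_{S_2}$ for all $\varphi \in \Aut(G)$. Then, writing $M = \max\{C, K\}$ and combining the bound $\Conj_{G,S_1}^\varphi(n) \le \Conj_{G,S_2}^\varphi(Kn)$ with the observation that $\|\varphi\|_{S_1} \le n$ forces $\|\varphi\|_{S_2} \le Cn$, we get
$$
\Tconj_{G,S_1}(n) \le \max\{ \Conj_{G,S_2}^\varphi(Mn) \suchthat \varphi \in \Aut(G), \ \|\varphi\|_{S_2} \le Mn \} = \Tconj_{G,S_2}(Mn) \le M \, \Tconj_{G,S_2}(Mn),
$$
so $\Tconj_{G,S_1}(n) \preceq \Tconj_{G,S_2}(n)$, and symmetry again upgrades this to $\approx$.

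There is no genuine obstacle here; the only point that requires a little care is bookkeeping of the two (independent) rescalings of the argument $n$ — one coming from the change of word-length metric on the ball, one from the change of norm on automorphisms — and confirming that replacing both by their maximum $M$ keeps every maximum taken over a set at least as large as the one in the target quantity, so that no term is lost. Everything else is the definition of $\preceq$ together with monotonicity of $\Conj$ and $\Tconj$ in $n$.
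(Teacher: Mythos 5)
Your proof is correct and is essentially the argument the paper intends: the paper omits the proof and refers to the standard change-of-generating-set lemmas of Bou-Rabee and Lawton--Louder--McReynolds, whose content is exactly your observation that $\D_G([x]_\varphi,\cdot)$ is generating-set independent while the word-length and automorphism-norm constraints only rescale by the bi-Lipschitz constants $K$ and $C$ (the latter from the preceding lemma). The bookkeeping with $M=\max\{C,K\}$ and monotonicity in $n$ is exactly what is needed, so nothing is missing.
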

The proof is similar to \cite[Lem 1.1]{Bou_Rabee10} (see also \cite[Lem 1.2]{Bou_Rabee11} and \cite[Lem 2.1]{LLM}).

Observe that if $\varphi = \id$, then $\Conj_{G,S}^\varphi$ is equal to the conjugacy separability $\Conj_{G,S}$ introduced by Lawton, Louder, and McReynolds \cite{LLM}; subsequently, $\Conj_{G,S}(n) \preceq \Tconj_{G,S}(n)$. If, for some $\varphi \in \Aut(G)$, we have that $\Conj_{G,S}^\varphi(n) < \infty$ for all $n \in \N$, then $G$ is $\varphi$-twisted conjugacy separable. Similarly, if $\Tconj_{G,S}(n) < \infty$ for all $n$, then $G$ is twisted conjugacy separable, and subsequently, $G$ is $\varphi$-conjugacy separable for all $\varphi \in \Aut(G)$. In particular, $G$ is conjugacy separable in that case.

\subsection{Nilpotent Groups}
\label{sec:nilpotent}
Most of the groups we work on in this paper are nilpotent groups, and we recall their basic properties. See \cite{Hall_notes,Segal_book_polycyclic} for a more thorough account of the theory of nilpotent groups.

A central series for a group $N$ is a sequence of subgroups $1 = N_0 \le N_1 \le \dots \le N_c = N$ such that $[N,N_{i+1}] \le N_i$. There are two special central series which play an important role when studying nilpotent groups. The $i$-th term of the \emph{lower central series} is defined by $\ga_1(N) \bdef G$ and inductively $\ga_i(N) \bdef [G,\ga_{i-1}(N)]$ for $i>1$.  The $i$-th term of the \emph{upper central series} is defined by $\zeta_1(N) \bdef Z(N)$ and inductively $\zeta_i(N) \bdef \pi_{\zeta_{i-1}(N)}^{-1}(Z(G / \zeta_{i-1}(N)))$ for $i>1$. We denote their associated projections as $\pi_i = \pi_{\ga_i(N)}$ and $\pi^i = \pi_{\zeta_i(N)}$ when $N$ is understood from context.

\begin{defn}
	We say that $N$ is \emph{nilpotent of step size $c$} if $c$ is the minimal natural number such that $\ga_{c + 1}(N) = 1$, or equivalently, $\zeta_{c}(N) = N$. We write the nilpotency class of $N$ as $c(N)$. We define the \emph{Hirsch length} of $N$ as $$h(N) = \sum_{i=1}^{c(N)}\text{rank}_\Z \left( \faktor{\ga_i(N)}{\ga_{i+1}(N)}\right).$$ We define $T(N)$ to be the finite characteristic subgroup of finite order elements. If $N$ is a torsion free, finitely generated nilpotent group, we say that $N$ is a $\mathcal{F}$-group. 
\end{defn}
For nilpotent groups $N$, the subgroup $N^m$ is always of finite index in $N$. 

\begin{lemma}\label{congruence_subgroup_of_finite_index_subgroup}
	Let $N$ be a $\mathcal{F}$-group, and let $H \leq N$ be a subgroup of index $p^\al$ where $p$ is a prime. Then $N^{p^\al} \leq H$.
\end{lemma}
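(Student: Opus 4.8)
The plan is to reduce the statement to a fact about a finite $p$-group acting on the coset space $N/H$. Let $K = \mathrm{Core}_N(H) = \bigcap_{g\in N} gHg^{-1}$ be the kernel of the permutation action $\rho\colon N \to \mathrm{Sym}(N/H)$ on the set of $p^{\alpha}$ left cosets of $H$. Since every generator of $N^{p^{\alpha}}$ is a $p^{\alpha}$-th power $g^{p^{\alpha}}$ and $K \leq H$, it suffices to prove that $g^{p^{\alpha}} \in K$ for all $g \in N$; equivalently, that every element of the finite group $F := N/K \cong \rho(N) \leq \mathrm{Sym}(N/H)$ has order dividing $p^{\alpha}$.

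The first and only substantive step is to show that $F$ is a $p$-group, and this is where nilpotency of $N$ enters. As a quotient of $N$, the group $F$ is finite nilpotent, hence splits as an internal direct product $F = P \times Q$ of its Sylow $p$-subgroup $P$ with a (normal) subgroup $Q$ of order coprime to $p$. Writing $\overline H = H/K$, which has index $p^{\alpha}$ in $F$, the set $Q\overline H$ is a subgroup since $Q$ is normal; it contains $\overline H$, so $[F : Q\overline H]$ divides $[F:\overline H] = p^{\alpha}$ and is therefore a power of $p$, whereas $[Q\overline H : \overline H] = |Q|/|Q\cap \overline H|$ divides $|Q|$ and is coprime to $p$. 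Since $[F:\overline H] = [F:Q\overline H]\,[Q\overline H : \overline H]$, the coprime-to-$p$ factor must equal $1$, so $Q \leq \overline H$. But $Q$ is normal in $F$ while $\overline H$ has trivial core in $F$ (because $K$ is the full core of $H$ in $N$), so $Q = 1$ and $F = P$ is a $p$-group.

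It then remains to invoke an elementary fact about symmetric groups: an element $\sigma$ of $p$-power order $p^{j}$ acting on a set of size $p^{\alpha}$ has a cycle of length exactly $p^{j}$, since the order of $\sigma$ is the least common multiple of its cycle lengths, all of which are powers of $p$, so the largest one equals $p^{j}$; that cycle occupies $p^{j}$ of the $p^{\alpha}$ points, forcing $p^{j} \leq p^{\alpha}$. Applying this to $\rho(g)$ for arbitrary $g \in N$ shows $\rho(g)^{p^{\alpha}} = 1$, i.e. $g^{p^{\alpha}} \in \ker\rho = K \leq H$, and hence $N^{p^{\alpha}} \leq H$.

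The only genuinely non-formal point is the second paragraph, namely that $F$ is a $p$-group rather than merely a group whose order happens to be divisible by $p^{\alpha}$; this rests on the decomposition of a finite nilpotent group into the direct product of its Sylow subgroups, and is the step I would write most carefully. Everything else is routine index arithmetic together with the cycle-structure observation. I note in passing that torsion-freeness of $N$ is not used in this argument, so the lemma in fact holds for every finitely generated nilpotent group; we only need the $\mathcal{F}$-group case.
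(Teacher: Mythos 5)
Your proof is correct, and it takes a genuinely different route from the paper's. The paper handles the normal case (where the statement is immediate, since $N/H$ is then a group of order $p^{\al}$) and then invokes the fact that every subgroup of a nilpotent group is subnormal, chaining the normal case along a subnormal series $H = H_0 \le H_1 \le \dots \le H_k = N$ using $N^{p^{a+b}} \le (N^{p^a})^{p^b}$. You instead pass to the core $K=\mathrm{Core}_N(H)$ and the coset action, and use the structure of the finite nilpotent quotient $F = N/K$: the Sylow decomposition $F = P \times Q$ together with the trivial-core observation forces $Q=1$, and the cycle-length argument inside $\mathrm{Sym}(N/H)$ bounds the exponent of $F$ by $p^{\al}$. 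Each step of yours checks out (in particular $[Q\ol H:\ol H]$ being prime to $p$ while dividing $[F:\ol H]=p^{\al}$, and the faithfulness of the action of $F$ on the $p^{\al}$ cosets). What the two approaches buy: the paper's proof is shorter once one cites subnormality of subgroups of nilpotent groups, while yours avoids that input, only needs standard finite nilpotent group theory, and actually proves the slightly stronger statement $N^{p^{\al}} \le \mathrm{Core}_N(H)$, i.e.\ the exponent of $N/\mathrm{Core}_N(H)$ divides $p^{\al}$. Your closing remark is also accurate — neither torsion-freeness nor finite generation is used — though the same is true of the paper's argument, so this is not an improvement specific to your route.
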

\begin{proof}
	For normal subgroups $H \le N$, this result is trivial. Using the fact that in nilpotent groups every subgroup $H$ in $N$ is subnormal, see \cite{Segal_book_polycyclic}, meaning that there exists a sequence $H_0 = H \le H_1 \le \ldots \le H_k = N$ such that $H_i$ is normal in $H_{i+1}$, the result follows for every subgroup $H$.
\end{proof}

The following subgroup will be useful in assigning numerical invariants to subgroups and group morphisms.
\begin{defn}
	Let $N$ be a $\mathcal{F}$-group, and let $H \leq N$ be a subgroup. We define the \emph{isolator of $H$ in $N$}, denoted $\sqrt[N]{H}$, as the set
	$$
	\sqrt[N]{H} = \{ x \in N \: | \: \text{there exists } k \in \N \text{ such that } x^k \in H \}.
	$$
\end{defn}
From \cite{Segal_book_polycyclic} it follows that $\sqrt[N]{H}$ is a subgroup for all $H \leq N$ when $N$ is a $\mathcal{F}$-group. Additionally, we have that $|\sqrt[N]{H} : H| < \infty$.

We now define the notion of a determinant of a subgroup of a finite generated nilpotent group. 
\begin{defn}
	Let $N$ be a nilpotent group, and let $H \le N$ be a subgroup. We define the \emph{determinant} $\det(H)$ as $\det(H) = | \sqrt[N]{H}:H|$. If $\map{\varphi}{N_1}{N_2}$ is a morphism of nilpotent groups, we write $\det(\varphi) = \det(\varphi(N_1))$.
\end{defn}

If $\varphi$ is an injective map of torsion-free abelian groups of the same rank, then $\det(\varphi)$ is equal to the usual determinant of the matrix representative of $\varphi$ with respect to a fixed choice of basis. Thus, we have a generalization of the usual notion of determinant to a more general class of groups and group morphisms. If $\map{\varphi}{A}{B}$ is a map of abelian groups where $\varphi(A)$ has rank equal to $B$, then $\det(\varphi) B \subseteq \varphi(A)$.

We need one more invariant for nilpotent groups. 
\begin{defn}
	Let $N$ be a $\mathcal{F}$-group, and let $z \in N$ be a primitive central element of $N$. An \emph{one dimensional central quotient of $N$ associated to $z$} is a quotient $\faktor{N}{H}$ such that $\faktor{N}{H}$ is a $\mathcal{F}$-group where $Z\left( \faktor{N}{H} \right) = \innp{\pi_H(z)}$. 
\end{defn}
For any primitive central element, the existence of an associated one dimensional central quotient of $N$ is guaranteed by \cite[Prop 3.1]{Pengitore_1}; however, uniqueness is not guaranteed motivating the following definition.

\begin{defn}
	Let $N$ be a $\mathcal{F}$-group. We define $\Phi(N)$ to be the smallest integer such that for every primitive $z \in Z(N)$, there exists an one dimensional central quotient $\faktor{N}{H}$ associated to $z$ such that $h \left(\faktor{N}{H}\right) \leq \Phi(N)$. 
\end{defn}
The value $\Phi(N)$ for a $\mathcal{F}$-group is the value found in the statement of \cite[Thm 1.1]{Pengitore_1} and is originally defined in \cite[Defn 3.4]{Pengitore_1}. 

\section{Twisted centralizers and twisted determinants}
\label{sec:twisted}

In this section, we introduce twisted centralizers and study the projections of these subgroups to finite quotients. The key concept for understanding these finite projections is the twisted determinant which we introduce in Definition \ref{def_twisted_det}. From now on, $N$ is a nilpotent group of nilpotency class $c$ and with a fixed automorphism $\map{\varphi}{N}{N}$. 

\begin{defn}
	\label{deftwistedcentralizer}
	Let $N$ be a nilpotent group, and let $\map{\varphi}{N}{N}$ be an automorphism. For every $i \geq 1$, we define the subgroups $$
	N_{i}^\varphi = \{x \: | \: x \: \varphi(x)^{-1} \in \ga_{i}(N) \}
	$$
	and the corresponding subsets
	$$
	X_{i}^\varphi = \{x \: \varphi(x)^{-1} \: | \: x \in N_{i}^\varphi \} \subseteq \ga_{i}(N).
	$$
	We call $N_i^\varphi$ the \emph{$i$-th twisted centralizer} corresponding to the automorphism $\varphi$.
	
\end{defn}
Note that $N_1^\varphi = N$ and $N_{c+1}^\varphi = \{x \in N \suchthat \varphi(x) = x \}$ by definition. If $\varphi = \Inn(y)$ with $y \in N$, then $N_{c+1}^\varphi$ is the centralizer of the element $y$, hence the name. 

The subsets $X_i^\varphi$ play an important role in studying twisted conjugacy classes as they determine if two elements are twisted conjugate when they differ by an element of $\ga_{i}(N)$
\begin{lemma}\label{nearly_twisted_conjugate_lemma}
	Let $N$ be a nilpotent group with an automorphism $\varphi$. For every $x,y \in N$ with $y \in \ga_{i}(N)$, it holds that $$y x \sim_\varphi x \Longleftrightarrow y \in X_i^{\varphi_x}.$$
\end{lemma}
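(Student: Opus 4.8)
The plan is to prove both implications by unwinding the definitions, the single substantive ingredient being an algebraic identity that converts a $\varphi$-twisted conjugation of $x$ into a $\varphi_x$-twisted conjugation, where $\varphi_x$ denotes the automorphism $z \mapsto x\,\varphi(z)\,x^{-1} = \Inn(x)\circ\varphi$. First I would record the key computation: for every $z \in N$,
$$
z\,x\,\varphi(z)^{-1} \;=\; \bigl(z\,x\,\varphi(z)^{-1}x^{-1}\bigr)\,x \;=\; \bigl(z\,\varphi_x(z)^{-1}\bigr)\,x,
$$
using that $x\,\varphi(z)^{-1}x^{-1} = \varphi_x(z)^{-1}$. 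Consequently $z\,x\,\varphi(z)^{-1} = y\,x$ holds if and only if $y = z\,\varphi_x(z)^{-1}$. Everything else is bookkeeping with Definition \ref{deftwistedcentralizer}, recalling that $N_1^{\varphi_x}=N$, so that $X_i^{\varphi_x} = \{\,z\,\varphi_x(z)^{-1} : z \in N,\ z\,\varphi_x(z)^{-1}\in\ga_i(N)\,\}$.

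For the implication $(\Leftarrow)$: if $y \in X_i^{\varphi_x}$, pick $z \in N_i^{\varphi_x} \subseteq N$ with $y = z\,\varphi_x(z)^{-1}$; the identity above gives $y\,x = z\,x\,\varphi(z)^{-1}$, exhibiting $z$ as a $\varphi$-twisting element carrying $x$ to $y\,x$, so $y\,x \sim_\varphi x$. For $(\Rightarrow)$: if $y\,x \sim_\varphi x$, then, using that $\sim_\varphi$ is an equivalence relation (in particular symmetric), there is $z \in N$ with $z\,x\,\varphi(z)^{-1} = y\,x$, and the identity yields $y = z\,\varphi_x(z)^{-1}$. Here is the one place the hypothesis $y \in \ga_i(N)$ enters: it says precisely that $z\,\varphi_x(z)^{-1} \in \ga_i(N)$, i.e.\ $z \in N_i^{\varphi_x}$, whence $y = z\,\varphi_x(z)^{-1} \in X_i^{\varphi_x}$.

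I do not expect a genuine obstacle; the statement is essentially a reformulation of the definition of twisted conjugacy once the correct auxiliary automorphism $\varphi_x$ has been isolated. The only points demanding care are (i) fixing the convention for $\varphi_x$ so the conjugation identity holds on the nose, and (ii) observing that the membership $y \in \ga_i(N)$ is exactly what promotes an arbitrary twisting element $z$ into the $i$-th twisted centralizer $N_i^{\varphi_x}$, so that the equivalence is with $X_i^{\varphi_x}$ and not merely with the full image set $\{\,z\,\varphi_x(z)^{-1} : z\in N\,\}$.
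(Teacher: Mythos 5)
Your proposal is correct and follows essentially the same route as the paper: the paper's proof is exactly the observation that $y\,x \sim_\varphi x$ means $z\,x\,\varphi(z)^{-1} = y\,x$ for some $z$, i.e.\ $y = z\,x\,\varphi(z)^{-1}x^{-1} = z\,\varphi_x(z)^{-1}$, which together with $y \in \ga_i(N)$ is precisely membership in $X_i^{\varphi_x}$. Your added remarks on the convention for $\varphi_x$ and on where the hypothesis $y \in \ga_i(N)$ is used are just a more explicit rendering of the same argument.
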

\begin{proof}
Note that $y x \sim_\varphi x$ if and only if there exists $z \in N$ such that $z \: x \: \varphi(z)^{-1} = y \: x$. This last statement is the same as $z \: x \: \varphi(z)^{-1} \: x^{-1} = y$. Equivalently, $y \in X_{i}^{\varphi_x}$.
\end{proof}

The twisted centralizers $N_i^\varphi$ are used to define the maps $\psi_{\varphi,i}$.

\begin{defn}
Let $N$ be a nilpotent group, and let $\varphi \in \Aut(N)$. For each $i$, we define a map $\map{\psi_{\varphi,i}}{N_i^\varphi}{ \faktor{\ga_i(N)}{\ga_{i+1}(N)}}$ as $$\psi_{\varphi,i} (x) = x \: \varphi(x)^{-1} \mod \ga_{i+1}(N).$$
\end{defn}

\begin{lemma}
The map	$\map{\psi_{\varphi,i}}{N_i^\varphi}{\faktor{\ga_i(N)}{\ga_{i+1}(N)}}$ is a group morphism for all $i$.
\end{lemma}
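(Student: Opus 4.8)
The plan is to verify directly that $\psi_{\varphi,i}$ is multiplicative; the remaining homomorphism axioms are then automatic, and checking that the domain $N_i^\varphi$ is genuinely a subgroup falls out of the same computation. Fix $x,y \in N_i^\varphi$ and set $a = x\,\varphi(x)^{-1}$ and $b = y\,\varphi(y)^{-1}$; by definition of the twisted centralizer both $a$ and $b$ lie in $\ga_i(N)$. Expanding the defining formula, $(xy)\,\varphi(xy)^{-1} = x\,y\,\varphi(y)^{-1}\,\varphi(x)^{-1} = x\,b\,\varphi(x)^{-1}$, and inserting $\varphi(x)^{-1}\varphi(x) = 1$ rewrites this as $x\,b\,\varphi(x)^{-1} = \bigl(x\,\varphi(x)^{-1}\bigr)\bigl(\varphi(x)\,b\,\varphi(x)^{-1}\bigr) = a\cdot\bigl(\varphi(x)\,b\,\varphi(x)^{-1}\bigr)$.

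The one nontrivial ingredient is that the conjugation action of $N$ on the abelian group $\ga_i(N)/\ga_{i+1}(N)$ is trivial. Concretely, $\varphi(x)\,b\,\varphi(x)^{-1} = b\cdot[b^{-1},\varphi(x)]$, and since $b^{-1}\in\ga_i(N)$ the commutator $[b^{-1},\varphi(x)]$ lies in $[\ga_i(N),N] = \ga_{i+1}(N)$. Hence $\varphi(x)\,b\,\varphi(x)^{-1} \equiv b \pmod{\ga_{i+1}(N)}$, and combining with the previous display gives $(xy)\,\varphi(xy)^{-1} \equiv a\,b \pmod{\ga_{i+1}(N)}$, which is exactly $\psi_{\varphi,i}(x)\,\psi_{\varphi,i}(y)$. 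This is the required multiplicativity.

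It remains to note that $N_i^\varphi$ is a subgroup, so that the statement even makes sense: the computation above exhibits $(xy)\,\varphi(xy)^{-1}$ as a product of $a\in\ga_i(N)$ with a conjugate of $b\in\ga_i(N)$, and since $\ga_i(N)$ is normal this product lies in $\ga_i(N)$, so $xy\in N_i^\varphi$; clearly $1\in N_i^\varphi$; and if $x\in N_i^\varphi$ then, writing $\varphi(x) = a^{-1}x$, we get $x^{-1}\varphi(x^{-1})^{-1} = x^{-1}\varphi(x) = x^{-1}a^{-1}x\in\ga_i(N)$ again by normality, so $x^{-1}\in N_i^\varphi$. A multiplicative map between groups is a homomorphism, so this finishes the proof.

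\textbf{Main obstacle.} There is no real obstacle here: the entire content is the standard fact that the adjoint action of a group is trivial on the successive quotients of its lower central series, so the only care needed is in the noncommutative rearrangement and in invoking normality of $\ga_i(N)$ and $\ga_{i+1}(N)$ at the right moments.
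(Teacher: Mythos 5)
Your proof is correct and follows essentially the same route as the paper: expand $(xy)\,\varphi(xy)^{-1}$, rewrite it as $\psi_{\varphi,i}(x)\,\psi_{\varphi,i}(y)$ times a commutator of an element of $\ga_i(N)$ with $\varphi(x)$, and observe that this commutator lies in $[\ga_i(N),N]=\ga_{i+1}(N)$. The additional verification that $N_i^\varphi$ is indeed a subgroup (which the paper asserts in the definition without proof) is a correct and welcome supplement.
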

\begin{proof}
	Let $x,y \in N_i^\varphi$. By computation,
	\begin{eqnarray*}
	\psi_{\varphi,i}(xy) &=& xy \: \varphi(xy)^{-1}   \mod \ga_{i + 1}(N) \:  \\
	\text{ } &=& xy \: \varphi(y)^{-1} \: \varphi(x)^{-1}   \mod\ga_{i + 1}(N) \: \\
	\text{ } &=& x \: \psi_{\varphi,i}(y) \: \varphi(x)^{-1}  \mod \ga_{i + 1}(N) \:  \\
	\text{ } &=& \psi_{\varphi,i}(x) \: \psi_{\varphi,i}(y) \: [\psi_{\varphi,i}(y)^{-1}, \varphi(x)]   \mod \ga_{i + 1}(N) \: \\
	\text{ } &=& \psi_{\varphi,i}(x) \: \psi_{\varphi,i}(y).
	\end{eqnarray*}\end{proof}

In order to understand how $N_i^{\varphi}$ relates to $N_{i+1}^\varphi$ for each $i$, we have the following lemma.
\begin{lemma}
With notations as above, we have $\ker(\psi_{\varphi,i}) = N_{i+1}^\varphi$.
\end{lemma}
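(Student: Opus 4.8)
The plan is to prove the two inclusions by directly unwinding the definitions of the objects involved. Recall that $\psi_{\varphi,i}\colon N_i^\varphi \to \ga_i(N)/\ga_{i+1}(N)$ sends $x \mapsto x\,\varphi(x)^{-1} \bmod \ga_{i+1}(N)$, and that this is a well-defined group morphism by the preceding lemma; in particular its kernel is a subgroup and the statement makes sense. First I would observe that $N_{i+1}^\varphi \subseteq N_i^\varphi$, since $\ga_{i+1}(N) \le \ga_i(N)$, so that $\psi_{\varphi,i}$ is defined on all of $N_{i+1}^\varphi$.

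For the inclusion $N_{i+1}^\varphi \subseteq \ker(\psi_{\varphi,i})$: take $x \in N_{i+1}^\varphi$, which by Definition \ref{deftwistedcentralizer} means $x\,\varphi(x)^{-1} \in \ga_{i+1}(N)$. Then $\psi_{\varphi,i}(x) = x\,\varphi(x)^{-1} \bmod \ga_{i+1}(N)$ is the trivial element of $\ga_i(N)/\ga_{i+1}(N)$, so $x \in \ker(\psi_{\varphi,i})$. For the reverse inclusion $\ker(\psi_{\varphi,i}) \subseteq N_{i+1}^\varphi$: take $x \in N_i^\varphi$ with $\psi_{\varphi,i}(x) = 1$, i.e. $x\,\varphi(x)^{-1} \in \ga_{i+1}(N)$; but this is exactly the defining condition for membership in $N_{i+1}^\varphi$, so $x \in N_{i+1}^\varphi$. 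Combining the two inclusions gives the equality.

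I expect there to be essentially no obstacle here: the result is immediate once the definitions of $N_i^\varphi$, $N_{i+1}^\varphi$, and $\psi_{\varphi,i}$ are placed side by side, since membership of $x$ in $\ga_{i+1}(N)$ is simultaneously the kernel condition for $\psi_{\varphi,i}$ and the defining condition for $N_{i+1}^\varphi$. The only minor point worth mentioning explicitly is that $N_{i+1}^\varphi$ sits inside $N_i^\varphi$, which is needed for the equality of the two (rather than just a comparison of one contained in the other as abstract sets). As a remark, this identification realizes $N_i^\varphi/N_{i+1}^\varphi$ as (isomorphic to) the image subgroup $\psi_{\varphi,i}(N_i^\varphi) \le \ga_i(N)/\ga_{i+1}(N)$, which is the structural fact that will drive the inductive arguments on step length later in the paper.
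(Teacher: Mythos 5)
Your proof is correct and follows essentially the same route as the paper's: both simply unwind the definitions of $\psi_{\varphi,i}$ and $N_{i+1}^\varphi$ to see that the kernel condition $x\,\varphi(x)^{-1} \in \ga_{i+1}(N)$ is precisely the membership condition for $N_{i+1}^\varphi$. Your explicit remark that $N_{i+1}^\varphi \subseteq N_i^\varphi$ is a harmless (and slightly more careful) addition.
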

\begin{proof}
Take any $x \in N_i^\varphi$, then $x \in \ker(\psi_{\varphi,i})$ if and only if $$ x \varphi(x)^{-1} \ga_{i + 1}(N) = \psi_{\varphi,i}(x) = 1 \mod \ga_{i+1}(N)$$ which is equivalent to $x \in N_{i+1}^\varphi$. 
\end{proof}

Each $\psi_{\varphi,i}$ induces an injective map $\map{\bar{\psi}_{\varphi,i}}{\faktor{N_i^{\varphi }}{N_{i+1}^\varphi}}{\faktor{\ga_i(N)}{\ga_{i+1}(N)}}$ of abelian groups.

With the above lemma, we can define a subgroup which will be of importance in the effective upper bound of twisted conjugacy separability.
\begin{defn}
	Let $N$ be a nilpotent group, and let $\varphi \in \Aut(N)$. The set $X_{c}^{\varphi}$ is a subgroup since it is the image of $\psi_{\varphi,c}$. We define the central subgroup $N_{\varphi} = X_{c}^\varphi \le \gamma_{c+1}(N) \le Z(N)$.
\end{defn}

 One can see that $x \in N_\varphi$ are elements of $\ga_{c}(N)$ that take the form $x = y \: \varphi(y)^{-1}$ for some $y \in N$. 

\begin{defn}
	\label{def_twisted_det}
Let $\Ga$ be a $\mathcal{F}$-group, and let $\varphi \in \Aut(N)$. For each $i$, we define $D_{\varphi,i} = \det(\bar{\psi}_{\varphi,i})$. Additionally, we define the \emph{twisted determinant} $D_\varphi$ as 
$$
D_\varphi = \prod_{i=1}^{c} D_{\varphi,i}.$$
\end{defn}

The twisted determinant is the main ingredient for the following proposition.

\begin{prop}\label{lemma_2_blackburn_generalization}
Let $N$ be a $\mathcal{F}$-group and $p$ be a prime. There exists a natural number $k^\ast(p,c)$ such that for every automorphism $\varphi$, natural number $k$, and $x \in N$ with $x \: \varphi(x)^{-1} \in N^{p^{k+ k^\ast(p,c) + v_p(D_\varphi)}}$  there exists $y \in N^{p^k}$ such that $y \: \varphi(y)^{-1} = x \: \varphi(x)^{-1}$. The number $k^\ast(p,c)$ can be chosen such that there exists a constant $C > 0$ with $p^{k^\ast(p,c) + v_p(D_\varphi)} \leq C |D_\varphi|$ for all primes $p$. 
\end{prop}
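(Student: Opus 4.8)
The goal is an ``effective lifting'' statement: if a twisted commutator $x\varphi(x)^{-1}$ lies deep enough in the $p$-power filtration (depth $k + k^\ast(p,c) + v_p(D_\varphi)$), then the same twisted commutator is realized by some $y$ already lying in $N^{p^k}$. The natural strategy is induction on the nilpotency class $c$, descending through the twisted centralizers $N_i^\varphi$ and using the morphisms $\bar\psi_{\varphi,i}\colon N_i^\varphi/N_{i+1}^\varphi \hookrightarrow \gamma_i(N)/\gamma_{i+1}(N)$, whose determinants $D_{\varphi,i}$ multiply to $D_\varphi$. At each stage I would solve the lifting problem one graded piece at a time: given that $x\varphi(x)^{-1}$ is congruent mod $\gamma_{i+1}(N)$ to an element in the image of $\bar\psi_{\varphi,i}$ of the required $p$-depth, I use the fact (recorded in the excerpt) that $\det(\bar\psi_{\varphi,i})\cdot \gamma_i(N)/\gamma_{i+1}(N) \subseteq \operatorname{im}(\bar\psi_{\varphi,i})$ to find an element $y_i \in N_i^\varphi$ with $y_i\varphi(y_i)^{-1} \equiv x\varphi(x)^{-1} \pmod{\gamma_{i+1}(N)}$, and moreover $y_i$ can be taken inside $N^{p^{k+(\text{correction})}}$ because the preimage of a $p^m$-divisible element under an injection of finitely generated abelian groups is $p^{m - v_p(\det)}$-divisible (up to a bounded correction coming from torsion, which is where $k^\ast$ absorbs the constants). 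Replacing $x$ by $y_i^{-1}x$ — using that $N_i^\varphi$ is a group and that multiplying the argument corrects the twisted commutator multiplicatively modulo a deeper term — reduces to the case where $x\varphi(x)^{-1} \in \gamma_{i+1}(N)$, i.e.\ $x \in N_{i+1}^\varphi$, and one proceeds. After $c$ steps one reaches $\gamma_{c+1}(N)$ and the process terminates.

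\textbf{Key steps, in order.}
\begin{enumerate}
\item Set up the induction on $c = c(N)$. The base case $c=1$ ($N$ abelian, $\varphi$ linear, $N^\varphi_2 = N_{c+1}^\varphi$) is essentially the elementary divisor / Smith normal form statement: $x-\varphi(x)$ divisible by $p^{k+m}$ with $m$ large forces $x-\varphi(x) = y-\varphi(y)$ for some $p^k$-divisible $y$, with the loss $m$ bounded by $v_p(\det)$ plus a constant accounting for the cokernel's torsion.
\item For the inductive step, analyze the graded piece at level $i=1$: use $\bar\psi_{\varphi,1}$ and the inclusion $D_{\varphi,1}\cdot(N/\gamma_2(N)) \subseteq \operatorname{im}(\bar\psi_{\varphi,1})$ to extract $y_1$ with controlled $p$-depth (here one must pass from divisibility in the quotient $N/\gamma_2(N)$ back to divisibility in $N$, which is where the class-dependent constant $k^\ast(p,c)$ first enters, via bounding the torsion and the index $|\sqrt[N]{H}:H|$ uniformly).
\item Replace $x$ by $y_1^{-1}x$; verify that this lands in $N_2^\varphi$ and that its twisted commutator equals the original one modulo $\gamma_2(N)$ — more precisely that the correction term $[\psi_{\varphi,i}(\cdot)^{-1},\varphi(\cdot)]$ that appeared in the proof that $\psi_{\varphi,i}$ is a homomorphism is swallowed by the deeper filtration and can be carried along. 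Then apply the inductive hypothesis to $N_2^\varphi$ (or to $N/\gamma_2$ as appropriate) with class $c-1$, with the exponent budget $k^\ast(p,c) + v_p(D_\varphi)$ split as $k^\ast(p,c-1) + v_p(D_{\varphi,1}) + v_p(\prod_{i\ge2}D_{\varphi,i})$ plus the level-$1$ correction.
\item Collect the $y_i$'s into a single $y = y_1 y_2 \cdots y_c$ lying in $N^{p^k}$ (using that each $y_i$ was chosen $p^k$-divisible modulo a correction that telescopes), and check $y\varphi(y)^{-1} = x\varphi(x)^{-1}$ exactly (not just modulo some $\gamma_j$), since the process exhausts the central series.
\item Finally, establish the uniform estimate $p^{k^\ast(p,c)+v_p(D_\varphi)} \le C|D_\varphi|$: since $v_p(D_\varphi) = \sum_p$-adic valuations and $|D_\varphi| = \prod_i |D_{\varphi,i}|$ with $D_{\varphi,i} = \det(\bar\psi_{\varphi,i})$, it suffices that $k^\ast(p,c)$ is bounded by a constant independent of $\varphi$ for $p$ dividing $D_\varphi$ and is $0$ (or absorbed) for $p \nmid D_\varphi$; the constant comes from the torsion subgroups $T(\gamma_i(N)/\gamma_{i+1}(N))$ and the bounded indices of isolators, all of which depend only on $N$ (hence on $c$ and $h(N)$), not on $\varphi$.
\end{enumerate}

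\textbf{Main obstacle.} The delicate point is the bookkeeping that keeps the $p$-depth loss \emph{additive in the $D_{\varphi,i}$} rather than multiplicative, uniformly over all automorphisms $\varphi$ — in particular, passing divisibility back and forth between $N$ and its lower-central quotients $\gamma_i(N)/\gamma_{i+1}(N)$ without the torsion of these quotients or the index $|\sqrt[N]{H}:H|$ introducing a factor that depends on $\varphi$. Concretely, when I lift a $p^m$-divisible element of $\operatorname{im}(\bar\psi_{\varphi,i}) \subseteq \gamma_i(N)/\gamma_{i+1}(N)$ to a preimage in $N_i^\varphi/N_{i+1}^\varphi$, I only directly get $p^{m-v_p(D_{\varphi,i})}$-divisibility \emph{after killing a finite cokernel}; then lifting from the quotient $N_i^\varphi/N_{i+1}^\varphi$ to $N_i^\varphi$ and further to $N$ costs another bounded-but-nonzero amount. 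The claim that all these costs can be packaged into a single $k^\ast(p,c)$ (with the stated size bound) is exactly the content one has to verify carefully, presumably by invoking uniform bounds on torsion and on isolator indices in $\mathcal F$-groups of bounded Hirsch length, together with the observation that $v_p(D_\varphi)=0$ for all but finitely many $p$, so $k^\ast(p,c)$ need only be nonzero for those primes and can otherwise be taken to be a fixed class-dependent constant.
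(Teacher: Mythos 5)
Your skeleton coincides with the paper's: induct through the twisted centralizers $N_i^\varphi$, use the injections $\bar\psi_{\varphi,i}$ together with the abelian determinant lemma (Lemma \ref{lemma_1}) to peel off an element $x_i\in N_i^\varphi$ of controlled $p$-depth, replace $x$ by $x_i^{-1}x\in N_{i+1}^\varphi$, and reassemble $y$ as the product of the peeled-off pieces. The genuine gap sits exactly where you place your ``main obstacle,'' and the mechanism you propose there is not the right one. To invoke Lemma \ref{lemma_1} at level $i$ you must know that the class of $x\,\varphi(x)^{-1}$ in $\gamma_i(N)/\gamma_{i+1}(N)$ is honestly $p^{m'}$-divisible, knowing only that $x\,\varphi(x)^{-1}$ lies in the verbal subgroup $N^{p^m}$, i.e.\ is a \emph{product} of $p^m$-th powers of arbitrary elements of $N$. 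This transfer is not governed by the torsion of the graded quotients or by isolator indices (those are finite constants of $N$ and, besides, would make $k^\ast$ depend on $N$ rather than on $p$ and $c$); the real issue is that collecting a product of $p^m$-th powers along the lower central series produces terms $y_i^{\binom{p^m}{i}}$, and $v_p\binom{p^m}{i}$ drops below $m$ by roughly $\log_p c$. Already in the torsion-free Heisenberg group with $p=2$ one has $y^{-2^m}(yx)^{2^m}x^{-2^m}=z^{\binom{2^m}{2}}$, so $z^{2^{m-1}}\in N^{2^m}\cap\gamma_2(N)$ while it is only $2^{m-1}$-divisible in $\gamma_2(N)\cong\Z$: a genuine loss with no torsion and no isolator in sight. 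So ``uniform bounds on torsion and isolator indices'' cannot supply the constant you need.

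The missing ingredient, which the paper supplies and your argument presupposes without proof, is a Blackburn-type root-extraction lemma: there is $k(p,c)$ depending only on $p$ and the class $c$, with $p^{k(p,c)}\leq c!$, such that every element of $N^{p^{k+k(p,c)}}$ is a single $p^k$-th power (Lemma \ref{blackburn_lemma}, proved by Hall's collection formula and exactly the binomial-coefficient bookkeeping above). With that lemma your graded lifting step goes through: the loss at each level is $k(p,c)+v_p(D_{\varphi,i})$, the total is $k^\ast(p,c)=(c-1)\,k(p,c)$ plus $v_p(D_\varphi)$, and the size estimate $p^{k^\ast(p,c)+v_p(D_\varphi)}\leq (c!)^{\,c-1}\,|D_\varphi|$ is immediate; in particular $k^\ast(p,c)=0$ for $p>c$, which is the correct reason the bound is uniform over primes (your dichotomy ``$p\mid D_\varphi$ versus $p\nmid D_\varphi$'' is not the relevant one). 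A secondary remark: your inductive step wavers between applying the hypothesis to $N_2^\varphi$ and to $N/\gamma_2(N)$; neither is quite right (the first need not lower the class, the second only yields the twisted commutator up to $\gamma$-error), and the clean induction is on the filtration level $i$ inside the fixed group $N$, as in the paper.
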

Observe that $k^\ast(p,c)$ is independent of the choice of automorphism. The above proposition is a twisted generalization of \cite[Lem 2]{Blackburn} and is an effective version of \cite[Thm 4.1.]{Jonas}. 

The proof of Proposition \ref{lemma_2_blackburn_generalization} relies on the following two lemmas.
\begin{lemma}\label{lemma_1}
Let $p$ be a prime, and let $\map{\varphi}{A}{B}$ be a group morphism of abelian groups. If $b \in \varphi(A) \cap \left({p^{k + v_p(\det(\varphi))}} \cdot B \right)$ for some $k \geq 0$, then there exists an $a \in p^k \cdot A$ such that $\varphi(a) = b$.
\end{lemma}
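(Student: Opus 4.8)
The plan is to reduce to the case of finitely generated free abelian groups and then read off the conclusion from the Smith normal form (elementary divisors) of $\varphi$.

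First I would factor $\varphi$ through $\pi\colon A \to A/\ker\varphi$, writing $\varphi = \bar\varphi\circ\pi$ with $\bar\varphi\colon A/\ker\varphi \hookrightarrow B$ injective. Since $\pi(p^k A) = p^k(A/\ker\varphi)$, it suffices to produce an element of $p^k(A/\ker\varphi)$ that $\bar\varphi$ sends to $b$: any $\pi$-preimage of such an element lies in $p^k A$ and is sent to $b$ by $\varphi$. Now $A/\ker\varphi$ is isomorphic via $\bar\varphi$ to the subgroup $\varphi(A) \leq B$, and since $\varphi(A)$ has rank equal to that of $B$ (a finitely generated torsion-free abelian group — this being the standing hypothesis under which $\det(\varphi)$ is defined and finite), $\varphi(A)$ has finite index in $B$; hence both $A/\ker\varphi$ and $B$ are free abelian of the same rank $r$ and $\det(\varphi) = |B : \varphi(A)|$. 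Thus we are reduced to $\varphi\colon \Z^r \hookrightarrow \Z^r$ with $\det(\varphi)$ equal, up to sign, to the determinant of the integer matrix of $\varphi$.

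Next I would invoke the theory of elementary divisors: choose bases of source and target so that $\varphi$ becomes the diagonal map $(a_1,\dots,a_r)\mapsto (e_1 a_1,\dots,e_r a_r)$ with positive integers $e_1 \mid e_2 \mid \dots \mid e_r$ satisfying $e_1 e_2 \cdots e_r = |\det(\varphi)|$. Writing $b = (b_1,\dots,b_r)$ in the chosen basis of the target, the hypothesis $b\in\varphi(A)$ gives $e_i \mid b_i$ for each $i$, so $a := (b_1/e_1,\dots,b_r/e_r)$ is the unique preimage of $b$. The hypothesis $b \in p^{k+v_p(\det\varphi)}B$ gives $v_p(b_i) \ge k + v_p(\det\varphi)$ for each $i$; since $e_i$ divides $\det(\varphi)$ we have $v_p(e_i) \le v_p(\det\varphi)$, whence $v_p(b_i/e_i) \ge k$, i.e. $a \in p^k A$. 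This $a$ is the desired element.

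There is no real obstacle here; the only points requiring care are the opening reduction — in particular noting that finiteness of $\det(\varphi)$ is automatic from the full-rank hypothesis, so that the reduced problem genuinely concerns finitely generated free abelian groups and the matrix determinant agrees with $\det(\varphi)$ — and the bookkeeping of $p$-adic valuations, for which the divisibility chain $e_1\mid\dots\mid e_r$ together with $e_1\cdots e_r = \det(\varphi)$ is exactly what makes the inequality $v_p(e_i)\le v_p(\det\varphi)$ work.
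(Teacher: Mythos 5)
Your Smith normal form computation is fine as far as it goes, but the reduction at the start restricts the lemma to a special case that is neither assumed in the statement nor available where the lemma is used, and the justification you give for the restriction misreads the definition of $\det(\varphi)$. In this paper $\det(\varphi)$ is the index $\abs{\sqrt[B]{\varphi(A)} : \varphi(A)}$ of the image in its isolator; this is finite for any morphism into a finitely generated abelian group, with no assumption that $\varphi(A)$ has full rank in $B$ and no assumption that $B$ is torsion-free. So "full rank is the standing hypothesis under which $\det(\varphi)$ is defined and finite" is not correct, and the lemma makes no such hypothesis. Worse, the excluded cases are exactly the ones needed: in the proof of Proposition \ref{lemma_2_blackburn_generalization} the lemma is applied to $\bar{\psi}_{\varphi,i}\colon N_i^\varphi/N_{i+1}^\varphi \to \gamma_i(N)/\gamma_{i+1}(N)$, whose image typically has strictly smaller rank than the target (for $\varphi = \id$ it is trivial), and $\gamma_i(N)/\gamma_{i+1}(N)$ may have torsion even when $N$ is an $\mathcal{F}$-group. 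Hence your argument, which needs $A/\ker\varphi$ and $B$ to be free of the same rank so that $\varphi$ becomes a square integer matrix with $\det(\varphi)$ equal to its determinant, does not prove the lemma as stated or as used. (A smaller slip: "any $\pi$-preimage of such an element lies in $p^kA$" is false; what you need, and what $\pi(p^kA) = p^k(A/\ker\varphi)$ gives, is that \emph{some} preimage does.)

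The paper's proof avoids all structure theory and has no rank or torsion-freeness restrictions: write $b = p^{k+v_p(\det\varphi)}\cdot g$ with $g \in B$; since $b \in \varphi(A)$, the element $g$ lies in $\sqrt[B]{\varphi(A)}$, and the order of $g + \varphi(A)$ in the finite group $\sqrt[B]{\varphi(A)}/\varphi(A)$ divides both $p^{k+v_p(\det\varphi)}$ and the group order $\det(\varphi)$, hence divides $p^{v_p(\det\varphi)}$; so $p^{v_p(\det\varphi)}\cdot g = \varphi(\tilde a)$ for some $\tilde a \in A$, and $a = p^k\tilde a$ works. You could rescue your approach by first replacing $B$ with $\sqrt[B]{\varphi(A)}$ (note $g$ automatically lands there, so the hypothesis survives this replacement) and then dealing separately with torsion, but by that point you have essentially rebuilt the short order-divisibility argument, so the Smith normal form machinery buys nothing here.
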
 
\begin{proof}
Write $b = {p^{k + v_p(\det(\varphi))}} \cdot g$ for some $g \in B$. Since $b \in \varphi(A)$, we know that $\Ord(g \cdot \varphi(A))$ as an element of the group $\sqrt[B]{\varphi(A)}/\varphi(A)$ divides $p^{k + v_p(\det(\varphi))}$. Since $\det(\varphi) = |\sqrt[B]{\varphi(A)} / \varphi(A)|$, it follows that $g \cdot \varphi(A)$ has order at most $p^{v_p(\det(\varphi))}$. Hence, ${p^{v_p(\det(\varphi))}} \cdot g \in \varphi(A)$. Let $\tilde{a} \in A$ with $\varphi(\tilde{a}) = g$, then the element $a =  p^k \: \tilde{a}$ is our desired element.
\end{proof}

We reproduce the proof of \cite[Lem 2]{Blackburn} in order to estimate the associated value that is constructed.
\begin{lemma}\label{blackburn_lemma}
	Let $N$ be a $\mathcal{F}$-group of nilpotency class $c$, and let $p$ be a prime. There exists an integer $k(p,c) \geq 0$ such that if $x \in N^{p^{k+k(p,c)}}$, then there exists $y \in N$ such that $x = y^{p^k}$. Additionally, $p(k,c)$ can be chosen such that $p^{k(p,c)}\leq c! \:$ for all primes $p$.
\end{lemma}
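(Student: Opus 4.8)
The plan is to induct on the nilpotency class $c$ of $N$, producing an explicit value $k(p,c)$ along the way and then checking at the end that $p^{k(p,c)}$ equals the $p$-primary part of $c!$ (so in particular $p^{k(p,c)}\le c!$). The base case $c=1$ is immediate: then $N$ is torsion free abelian, $N^{p^{k}}=p^{k}N$ consists of genuine $p^{k}$-th powers, so one may take $k(p,1)=0$, and $p^{0}=1$ divides $1!$.

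For the inductive step I would assume the statement for all $\mathcal{F}$-groups of class $\le c-1$, with constant $k(p,c-1)$ satisfying $p^{k(p,c-1)}\mid (c-1)!$, and set $M=\sqrt[N]{\ga_c(N)}$. Since $\ga_c(N)\le Z(N)$ and $Z(N)$ is isolated in the $\mathcal{F}$-group $N$ (see \cite{Segal_book_polycyclic}), $M$ is a central, finitely generated, torsion free subgroup, $M$ is the kernel of $\pi_M$, and $\bar N=N/M$ is an $\mathcal{F}$-group of class $\le c-1$. I would then take $k(p,c)=k(p,c-1)+v_p(c)$. Given $x\in N^{p^{k+k(p,c)}}$, its image lies in $\bar N^{p^{k+k(p,c)}}$, so applying the inductive hypothesis to $\bar N$ with $k$ replaced by $k+v_p(c)$ produces $v\in N$ with $\pi_M(v)^{p^{k+v_p(c)}}=\pi_M(x)$, i.e. $x=v^{p^{k+v_p(c)}}m$ with $m\in M$. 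Because $M$ is central, it then suffices to write $m$ as a $p^{k}$-th power in $M$: if $m=n^{p^{k}}$ with $n\in M$, then $y:=v^{p^{v_p(c)}}n$ satisfies $y^{p^{k}}=v^{p^{k+v_p(c)}}n^{p^{k}}=v^{p^{k+v_p(c)}}m=x$.

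The crux — and the step I expect to be the main obstacle — is the resulting divisibility statement. Since $m=v^{-p^{k+v_p(c)}}x\in M\cap N^{p^{k+v_p(c)}}$, everything reduces to the uniform (independent of $N$) containment $M\cap N^{p^{\,j+v_p(c)}}\subseteq p^{j}M$ for all $j\ge 0$, which is essentially the content of \cite[Lem 2]{Blackburn}. I would prove it by the Hall--Petrescu collection process: expanding an element of $N^{p^{m}}$ as a product $\prod_i u_i^{p^{m}}$ and collecting, every correction term landing in the central subgroup $\ga_c(N)$ is a power $c_i^{\binom{p^{m}}{i}}$ of an element $c_i\in\ga_i(N)$; using that $c$-fold commutators are multilinear modulo $\ga_{c+1}(N)=1$ and that $M$ is the isolator of $\ga_c(N)$, the only loss of $p$-divisibility in the $\ga_c$-component is governed by these binomial coefficients, and $v_p\!\binom{p^{m}}{c}=m+v_p((c-1)!)-v_p(c!)=m-v_p(c)$ (and the term vanishes when $p^{m}\le c-1$), which yields exactly the loss $p^{v_p(c)}$. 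This collection argument, together with the bookkeeping of how $p^{m}$-th powers meet the lower central series, is where all of the real work sits; the remainder of the proof is routine.

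Finally, unwinding the recursion gives $p^{k(p,c)}=p^{\sum_{j=2}^{c}v_p(j)}=p^{v_p(c!)}$, the $p$-primary part of $c!$; in particular $p^{k(p,c)}\mid c!$, hence $p^{k(p,c)}\le c!$, completing the induction.
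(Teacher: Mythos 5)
Your overall architecture (pass to $\bar N=N/\sqrt[N]{\ga_c(N)}$, lift, and fix up a central discrepancy) is reasonable, but the quantitative claim the whole proof rests on is false. You reduce everything to the containment $M\cap N^{p^{\,j+v_p(c)}}\subseteq p^{j}M$ with $M=\sqrt[N]{\ga_c(N)}$, and this already fails for $c=3$, $p=2$, $j=1$: take $N=U_4(\Z)$, the group of $4\times 4$ upper unitriangular integer matrices (class $3$, $M=\ga_3(N)=Z(N)=\{I+tE_{14}\}$). With $g_1=I+E_{12}+E_{24}$, $g_2=I+E_{12}$, $g_3=I+E_{24}$ one computes $g_1^2=I+2E_{12}+2E_{24}+E_{14}$ and hence $g_1^{2}\,g_2^{-2}\,g_3^{-2}=I+E_{14}$, so the generator of $M$ lies in $N^{2}$; since $v_2(3)=0$, your containment would force $I+E_{14}\in 2M$, which is false. (The same family of examples, using $g_1^{p^m}$, shows the loss at the top level is governed by $v_p\binom{p^m}{2}=m-v_p(2)$ coming from a $\ga_2$-direction, not by $v_p\binom{p^m}{c}$.)

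The error is exactly in the heuristic "the only loss of $p$-divisibility in the $\ga_c$-component is governed by $\binom{p^m}{c}$": in the Hall--Petrescu expansion the lower correction terms $c_i^{\binom{p^m}{i}}$ with $c_i\in\ga_i(N)$, $2\le i<c$, generally have nontrivial $\ga_c$-components, and these enter with divisibility only $p^{m-\lfloor\log_p i\rfloor}$. This is why the paper's per-step loss is $d_p(c)=\lfloor\log_p c\rfloor$ (giving $k(p,c)=\sum_{i=1}^{c}d_p(i)$ and the bound $p^{k(p,c)}\le c!$), not $v_p(c)$; your claimed total $k(p,c)=v_p(c!)$ is strictly smaller and is not established by your argument. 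Note also that the containment you invoke is essentially a restatement of the lemma itself (the paper \emph{deduces} such statements from the lemma, and Blackburn's Lemma 2 is precisely what is being reproved here), so citing it defers all of the actual work to a claim that, with your constant, is simply wrong. A correct proof along the paper's lines instead collects $x=\prod x_i^{p^{k+k(p,c)}}$, absorbs each correction term $y_i^{\binom{p^{k+k(p,c)}}{i}}$ as a $p^{k+k(p,c)-d_p(c)}$-th power, and applies induction inside the subgroup $\langle z_1,[N,N]\rangle$ of smaller class, with the recursion $k(p,c)=k(p,c-1)+d_p(c)$.
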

\begin{proof}
For a natural number $m$, we let $d_p(m)$ be the largest integer such that $p^{d_p(m)} \leq m$. We show that the lemma holds for the value $k(p,c) = \sum_{i=1}^{c} d_p(i)$. We proceed by induction on nilpotency class length, and since the statement is evident for abelian groups, we may assume that $N$ has nilpotency class $c > 1$. For $x \in N^{p^{k + k(p,c)}}$, we write $x  = \prod_{i=1}^r x_i^{p^{k+k(p,c)}}$ where $x_i \in N$ for all $i$. Then \cite[Thm 6.3]{Hall_notes} implies that we may write
	$$
	x = \prod_{i=1}^{r}x_{i}^{p^{k+k(p,c)}} = \pr{\prod_{i=1}^r x_i}^{p^{k+k(p,c)}}  \prod_{i=2}^{c}y_i^{{p^{k+k(c,p)} \choose i}} = \prod_{i=1}^{c}y_i^{{p^{k+k(c,p)} \choose i}}
	$$
	where $y_i \in \ga_i(N)$ for each $i$ and $y_1 = \prod_{i=1}^r x_i$. We may write the binomial coefficient
	$$
	{p^{k + k(p,c)} \choose i} = {p^{k + k(p,c)} \choose p^{\ell_i} \: u_i}
	$$
	where $\GCD(u_i,p) = 1$. Thus, this value is divisible by $p^{k+k(p,c) - \ell_i}$; therefore, we write
	$$
	{p^{k + k(p,c)} \choose i} = m_i \: p^{k + k(p,c) - \ell_i}.
	$$
	Since $N$ has nilpotent class $c$ and $y_i \in \ga_i(N)$, we have $g_{p^{\ell_i} \: u_i}=1$ for $p^{\ell_i} \: u_i > c$. If $p^{\ell_i}u_i \leq c$, then $\ell_i \leq d_p(c)$. Letting $s_i = d_p(c) - \ell_i$, we write
	$$
	{p^{k + k(p,c)} \choose i} = m_i \: p^{k + k(p,c) - \ell_i} = m_i p^{k + k(p,c) + s_i - d_p(c) }
	$$
	for $i \leq c$. Subsequently, for $i \leq c$ we have
	$$
	y_i^{{p^{k + k(p,c)} \choose i} } = y_i^{m_i p^{k + k(p,c) + s_i - d_p(c) }} = (y_i^{m_i \: p^{s_i}})^{p^{k + k(p,c) - d_p(c)}}.
	$$
	Letting $z_i = y_i^{m_i \: p^{s_i}}$, we may write
	$$
	x = 
	\prod_{i=1}^{r}x_{i}^{p^{k+k(p,c)}} = \prod_{i=1}^{c} z_i^{p^{k+k(p,c) - d_p(c)}} = \prod_{i=1}^c z_i^{p^{k + k(p,c-1)}}.
	$$
	where $z_i \in [N,N]$ for $2 \leq i \leq c$. Then \cite[Lem 1.3]{Hall_notes} implies that $\innp{z_1,[N,N]}$ has nilpotency class strictly less than $c$. Since $\set{z_1,z_2, \cdots, z_{c}} \in \innp{z_1,[N,N]}$, the inductive hypothesis gives us a $y \in \innp{z_1,[N,N]}$ such that $x = y^{p^k}$. By construction, $k(p,c) = \sum_{i=1}^{c}d_p(i)$ which implies $p^{k(p,c)} \leq \prod_{i=1}^{c}p^{d_p(i)}\leq c!.$
\end{proof}
\begin{proof}[Proof of Proposition \ref{lemma_2_blackburn_generalization}]
Let $k(p,c)$ be the natural number from Lemma \ref{blackburn_lemma}. We proceed by induction on $i$ where $x \in N_i^\varphi$. If $i = c + 1$, or equivalently, $x \: \varphi(x)^{-1} \in \ga_{c+1}(N) =1$, then we take $y = 1$.

Now assume that $x \: \varphi(x)^{-1} \in \ga_i(N) \cap N^{p^{k}}$ for $k$ sufficiently big. Letting $A = \faktor{N_i^\varphi}{N_{i+1}^\varphi}$ and $B = \faktor{\ga_i(N)}{\ga_{i+1}(N)}$, we consider the injective morphism of abelian groups $\map{\bar{\psi}_{\varphi,i}}{A}{B}$ induced by $\psi_{\varphi,i}$. Lemma \ref{blackburn_lemma} gives us a $z \in N$ such that $x \: \varphi(x)^{-1} = z^{p^{k - k(p,c)}}$ which implies $\bar{\psi}_{\varphi,i}(x) \in B^{p^{k - k(p,c)}}$. Lemma \ref{lemma_1} shows that there exists $x_i \in (N_i^\varphi)^{p^{k - k(p,c) - v_p(\det(\bar{\varphi}))}}$ such that $\bar{\psi}_{\varphi,i}(x \: N_{i+1}^\varphi)
 = \bar{\varphi_i}(x_i \: N_{i+1}^\varphi)$. Hence, $x_i^{-1} \: x \in N_{i+1}^\varphi$, and since $x_i,x \in N^{p^{k - k(p,c) - v_p(D_i)}}$, it follows that $\tilde{x} \in N^{p^{k - k(p,c) - v_p(D_i)}}$. Thus, $\tilde{x}\: \varphi(\tilde{x})^{-1} \in \ga_{i+1}(N) \cap N^{p^{k - k(p,c) - v_p(D_i)}}.$ Induction implies the theorem follows with the constant given by $k^\ast(p,c) = (c-1) \: k(p,c)$.
 
 We finish by noting Lemma \ref{lemma_2_blackburn_generalization} implies that $k(p,c) \leq c\,!$, and thus, $$p^{k^\ast(p,c) + v_p(D_\varphi)} = p^{(c-1)\cdot k(p,c) + v_p(D_\varphi)} \leq (p^{k(p,c)})^{c-1} p^{v_p(D_\varphi)} \leq (c)^{c - 1} |D_\varphi|.$$ \end{proof}

We denote by $\bar{\varphi}_{p^k}$ the automorphism induced by $\varphi$ on the quotient $\faktor{N}{N^{p^k}}$.

\begin{cor}\label{twisted_central_pullback_matrix_reduction}
Let $N$ be a $\mathcal{F}$-group of nilpotency class $c$, and let $\map{\varphi}{N}{N}$ be an automorphism. Let $p$ be a prime and $k$ be some natural number. Take $k^\ast(p,c-1)$ as in Proposition \ref{lemma_2_blackburn_generalization}, and define $k_0 = v_p(D_\varphi) + k^\ast(p,c-1)$. Define $\map{\rho}{N / N^{p^{k+k_0}}}{N / N^{p^k}}$ as the natural projection such that $\pi_{p^k} = \rho \circ \pi_{p^{k+k_0}}$. Then $\rho \left( \left( \faktor{N}{N^{p^{k + k_0}}} \right)_{\bar{\varphi}_{p^{k+k_0}}}\right) = \pi_{p^k}(N_\varphi)$.
\end{cor}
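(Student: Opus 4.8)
The plan is to prove the two inclusions separately, the inclusion $\pi_{p^k}(N_\varphi)\subseteq \rho\left( \left(\faktor{N}{N^{p^{k+k_0}}}\right)_{\bar\varphi_{p^{k+k_0}}}\right)$ being essentially formal and the reverse one carrying the content. Write $q=\pi_{p^k}$ and $\tilde q=\pi_{p^{k+k_0}}$ (so $q=\rho\circ\tilde q$), put $\tilde M=\faktor{N}{N^{p^{k+k_0}}}$ and $\tilde\varphi=\bar\varphi_{p^{k+k_0}}$, and recall $N_\varphi=X_c^\varphi=\{x\,\varphi(x)^{-1}\suchthat x\in N_c^\varphi\}$ with $N_c^\varphi=\{x\suchthat x\,\varphi(x)^{-1}\in\ga_c(N)\}$, and likewise $\tilde M_{\tilde\varphi}=X_c^{\tilde\varphi}$. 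First I would translate everything into subsets of $N$: since $\tilde q$ is onto, commutes with $\varphi$, and $\ga_c(\tilde M)=\tilde q(\ga_c(N))$ while $\ga_{c+1}(\tilde M)=1$, an element $\tilde q(x)$ lies in $\tilde M_c^{\tilde\varphi}$ exactly when $x\in V:=\{x\in N\suchthat x\,\varphi(x)^{-1}\in\ga_c(N)\,N^{p^{k+k_0}}\}$; hence $\tilde M_{\tilde\varphi}=\{\tilde q(x\,\varphi(x)^{-1})\suchthat x\in V\}$ and, applying $\rho$, $\rho(\tilde M_{\tilde\varphi})=\{q(x\,\varphi(x)^{-1})\suchthat x\in V\}$, whereas $\pi_{p^k}(N_\varphi)=\{q(x\,\varphi(x)^{-1})\suchthat x\in N_c^\varphi\}$. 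Since $N_c^\varphi\subseteq V$, the inclusion $\pi_{p^k}(N_\varphi)\subseteq\rho(\tilde M_{\tilde\varphi})$ is immediate. (If $\ga_c(N)\subseteq N^{p^{k+k_0}}$ the situation is degenerate and both sides are trivially $1$, so assume otherwise.)

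For the reverse inclusion it is enough to show: \emph{every $x\in V$ factors as $x=y x'$ with $y\in N^{p^k}$ and $x'\in N_c^\varphi$.} Indeed, given such a factorization, $x'\,\varphi(x')^{-1}\in\ga_c(N)\le Z(N)$, so $x\,\varphi(x)^{-1}=(x'\,\varphi(x')^{-1})(y\,\varphi(y)^{-1})$; as $N^{p^k}$ is characteristic, $y\,\varphi(y)^{-1}\in N^{p^k}=\ker q$, whence $q(x\,\varphi(x)^{-1})=q(x'\,\varphi(x')^{-1})\in\pi_{p^k}(N_\varphi)$, giving $\rho(\tilde M_{\tilde\varphi})\subseteq\pi_{p^k}(N_\varphi)$.

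To produce the factorization I would pass to $\bar N=\faktor{N}{\ga_c(N)}$, a finitely generated nilpotent group of class $\le c-1$, with induced automorphism $\bar\varphi$ and projection $\pi_c$. For $x\in V$, writing $\bar x=\pi_c(x)$, we get $\bar x\,\bar\varphi(\bar x)^{-1}=\pi_c(x\,\varphi(x)^{-1})\in\pi_c(N^{p^{k+k_0}})=\bar N^{p^{k+k_0}}$. Now apply Proposition \ref{lemma_2_blackburn_generalization} to $\bar N$ and $\bar\varphi$: this is permitted because $\bar N$ has class $\le c-1$, so $k^\ast(p,c(\bar N))\le k^\ast(p,c-1)$ (the function $k^\ast(p,\cdot)$ being nondecreasing, e.g. $k^\ast(p,m)=(m-1)\sum_{i=1}^m d_p(i)$), and $D_{\bar\varphi}\mid D_\varphi$, hence $v_p(D_{\bar\varphi})\le v_p(D_\varphi)$; together these give $\bar x\,\bar\varphi(\bar x)^{-1}\in\bar N^{p^{k+k_0}}\subseteq\bar N^{p^{\,k+v_p(D_{\bar\varphi})+k^\ast(p,c(\bar N))}}$, which is the hypothesis of the proposition. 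It yields $\bar y\in\bar N^{p^k}$ with $\bar y\,\bar\varphi(\bar y)^{-1}=\bar x\,\bar\varphi(\bar x)^{-1}$; lift $\bar y$ to $y\in N^{p^k}$ (possible since $\pi_c(N^{p^k})=\bar N^{p^k}$) and set $x'=y^{-1}x$. Then $\pi_c(x'\,\varphi(x')^{-1})=\bar y^{-1}(\bar x\,\bar\varphi(\bar x)^{-1})\bar\varphi(\bar y)=\bar y^{-1}(\bar y\,\bar\varphi(\bar y)^{-1})\bar\varphi(\bar y)=1$, i.e. $x'\,\varphi(x')^{-1}\in\ga_c(N)$, so $x'\in N_c^\varphi$ and $x=yx'$ as required. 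For the divisibility $D_{\bar\varphi}\mid D_\varphi$ I would check that $\pi_c$ induces isomorphisms $\faktor{\ga_i(N)}{\ga_{i+1}(N)}\cong\faktor{\ga_i(\bar N)}{\ga_{i+1}(\bar N)}$ and $\faktor{N_i^\varphi}{N_{i+1}^\varphi}\cong\faktor{\bar N_i^{\bar\varphi}}{\bar N_{i+1}^{\bar\varphi}}$ for $1\le i\le c-1$ (using $\ga_c(N)\subseteq\ga_i(N)\cap N_{i+1}^\varphi$), intertwining $\bar\psi_{\varphi,i}$ with $\bar\psi_{\bar\varphi,i}$, so that $D_{\bar\varphi,i}=D_{\varphi,i}$ for $i\le c-1$ and hence $D_{\bar\varphi}=\prod_{i=1}^{c(\bar N)}D_{\varphi,i}$ divides $D_\varphi=\prod_{i=1}^{c}D_{\varphi,i}$.

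The hard part is exactly this last step: recognizing that Proposition \ref{lemma_2_blackburn_generalization} must be applied not to $N$ itself but to the quotient $\bar N=N/\ga_c(N)$ of class $c-1$ — which is precisely why $k_0$ is built from $k^\ast(p,c-1)$ rather than $k^\ast(p,c)$ — and carrying out the exponent bookkeeping ($v_p(D_{\bar\varphi})\le v_p(D_\varphi)$, monotonicity of $k^\ast$) needed to verify its hypothesis. A minor subtlety is that $\bar N$ need not be torsion-free; but the proof of Proposition \ref{lemma_2_blackburn_generalization}, resting on Lemma \ref{blackburn_lemma} and Lemma \ref{lemma_1}, uses only that $\bar N$ is finitely generated nilpotent. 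Everything else — the rewriting in terms of subsets of $N$, the two short commutator computations, and the lift of $\bar y$ — is routine.
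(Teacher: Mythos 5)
Your proof is correct and takes essentially the same route as the paper's: the easy inclusion by pushing $N_\varphi$ forward through the projections, and the reverse inclusion by applying Proposition \ref{lemma_2_blackburn_generalization} to $N/\gamma_c(N)$ (of class $c-1$) to replace the witness by one lying in $N^{p^k}$, which is exactly why $k_0$ involves $k^\ast(p,c-1)$. Your additional bookkeeping --- checking $D_{\bar{\varphi}} \mid D_\varphi$ so that $v_p(D_\varphi)$ suffices in the exponent, and noting that $N/\gamma_c(N)$ need not be torsion free but the proposition's proof still applies --- only makes explicit details that the paper's proof leaves implicit.
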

\begin{proof}
First assume that $x \in N_\varphi$, or equivalently, $x = y \: \varphi(y)^{-1} \in \ga_{c}(N)$ for some $y \in N$. Note that $\pi_{p^{k+k_0}}(x) \in \gamma_c(N / N^{p^{k+k_0}})$ since $\pi_{p^{k+k_0}}$ is a group morphism. Now $$\pi_{p^{k+k_0}}(x) =\pi_{p^{k+k_0}} (y \varphi(y)^{-1})= \pi_{p^{k+k_0}}(y) \: \varphi_{p^{k+k_0}}(\pi_{p^{k+k_0}}(y))^{-1} \in \left( \faktor{N}{N^{p^{k + k_0}}} \right)_{\bar{\varphi}_{p^{k+k_0}}};$$ hence,  $\rho(\pi_{p^{k+k_0}}(x)) = \pi_{p^k}(x) \in \rho \left( \left( \faktor{N}{N^{p^{k + k_0}}} \right)_{\bar{\varphi}_{p^{k+k_0}}}\right)$. Hence, $\pi_{p^k}(N_\varphi) \subseteq \rho \left( \left( \faktor{N}{N^{p^{k + k_0}}} \right)_{\bar{\varphi}_{p^{k+k_0}}}\right)$.

For the other inclusion, let $x \in \ga_c(N)$ with $\pi_{p^{k}}(x) \in \rho \left( \left( \faktor{N}{N^{p^{k + k_0}}} \right)_{\bar{\varphi}_{p^{k+k_0}}}\right)$. Thus, there exists $y \in N$ such that $y \: \varphi(y)^{-1} \in x \: N^{p^{k+k_0}}$. Applying Proposition \ref{lemma_2_blackburn_generalization} to $N / \ga_c(N)$ and the map induced by $\varphi$, there exists $z \in N^{p^k}$ such that $z \: \varphi(z)^{-1} \in y \: \varphi(y)^{-1} \ga_{c}(N)$. Hence, $y^{-1} \: z \: \varphi(y^{-1} \: z)^{-1} \in N_\varphi$. Since $\pi_{p^k}(z) = \pi_{p^k}(\varphi(z)) = 1$, we get
$$
\pi_{p^k}(x) = \pi_{p^k}(y \: \varphi(y)^{-1}) = \pi_{p^k}(z^{-1} \: y \: \varphi(z^{-1} \: y)^{-1}) \in \pi_{p^k}(N_\varphi).
$$
We conclude that $\rho \left( \left( \faktor{N}{N^{p^{k + k_0}}} \right)_{\bar{\varphi}_{p^{k+k_0}}}\right) = \pi_{p^k}(N_\varphi)$.
\end{proof}

\section{Bounding the Twisted Determinant}

In the previous section, we identified the twisted determinant as the crucial factor for studying the twisted centralizers in finite quotients. The goal of this section is to bound the twisted determinant of an automorphism in terms of the norm of that automorphism. 

Before we start, we provide some propositions and examples that relate the determinates of subgroups with the norms of those subgroups.
\begin{ex}
	\label{firstlength}
	Consider the group $G = \Z$ with standard generating subset $S = \{ \pm 1\}$. The norm of the subgroup $n \Z$ with $n \in \N$ satisfies $$\Vert n \Z \Vert_S = n = [\Z:n\Z] = \det(n\Z).$$\end{ex}

We generalize the previous example to all finitely generated abelian groups.

\begin{prop}\label{det_bound_length_subgroup}
Let $A$ be a finitely generated abelian group with a finitely generating subset $S$. There exists a constant $C > 0$ such that if $H \leq A$ is a subgroup, then $\det(H) \leq C \: (\|H\|_S)^{\text{rank}_\Z (H)}$.
\end{prop}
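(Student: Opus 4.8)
The plan is to reduce to the case where $A$ is free abelian and then interpret $\det(H)$ as an honest matrix determinant bounded via Hadamard's inequality. First I would handle torsion: write $A = T \oplus F$ with $T = T(A)$ finite and $F \cong \Z^r$ free. Given $H \leq A$, the isolator $\sqrt[A]{H}$ contains all of $T$ (every element of $T$ has finite order), so $\det(H) = |\sqrt[A]{H} : H|$. One checks that $\det(H) = |T \cdot \sqrt[A]{H} : T \cdot H| \cdot (\text{something bounded by } |T|)$; more precisely, projecting to $F$ along $T$ sends $H$ to a subgroup $\bar H \leq F$ of the same rank, and $\det(H)$ differs from $\det(\bar H)$ by a factor bounded in terms of $|T|$ alone. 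Absorbing $|T|$ into the constant $C$, it suffices to prove the statement for $A = F = \Z^r$.

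Next, for $H \leq \Z^r$ of rank $s = \rank_\Z(H)$, pick generators $h_1, \dots, h_s$ of $H$ realizing the norm, so $\|h_j\|_S \leq \|H\|_S$ for each $j$. With respect to a fixed basis of $\Z^r$ compatible with $S$ (changing $S$ only affects the final constant, by the earlier lemma comparing norms for different generating subsets), each $h_j$ is an integer vector whose Euclidean length is at most $C' \|H\|_S$ for a constant $C'$ depending only on $S$ and the fixed basis. Now $\sqrt[\Z^r]{H}$ is a direct summand of $\Z^r$ of rank $s$, i.e.\ a free abelian group $\Z^s$ containing $H$ with finite index, and $\det(H) = |\sqrt[\Z^r]{H} : H|$ equals the absolute value of the determinant of the $s \times s$ integer matrix $M$ expressing $h_1, \dots, h_s$ in a basis of $\sqrt[\Z^r]{H}$. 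The columns of $M$ are the coordinate vectors of the $h_j$ in that basis; since $\sqrt[\Z^r]{H}$ is a summand of $\Z^r$, these coordinate vectors have Euclidean length bounded by a fixed constant times the Euclidean length of the $h_j$ in $\Z^r$, hence by $C'' \|H\|_S$. Hadamard's inequality then gives $\det(H) = |\det M| \leq \prod_{j=1}^s \|M e_j\|_2 \leq (C'' \|H\|_S)^s = (C'' \|H\|_S)^{\rank_\Z(H)}$, and pulling the constant out yields the claim.

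The step I expect to require the most care is controlling the passage from a generating set of $H$ inside $\Z^r$ to a matrix representing $H$ inside its isolator $\sqrt[\Z^r]{H}$: one must argue that choosing a basis of the summand $\sqrt[\Z^r]{H}$ does not blow up the lengths of the coordinate vectors of the $h_j$. This follows because $\sqrt[\Z^r]{H}$ is a \emph{direct summand}, so there is a basis of $\Z^r$ whose first $s$ vectors span $\sqrt[\Z^r]{H}$, and the coordinate map is then just (a fixed linear change of basis followed by) coordinate projection, which is norm-nonincreasing up to a constant depending only on $r$ and the fixed basis. Everything else is a bounded book-keeping of constants, all of which are independent of $H$, so they can be collected into the single constant $C$.
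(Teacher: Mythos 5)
Your overall strategy (reduce to $A=\Z^r$, take $\rank_\Z(H)$ short generators, bound an integer determinant by Hadamard) is close in spirit to the paper's, but the step you yourself flag as the delicate one is justified incorrectly, and as written it fails. You claim that because $\sqrt[\Z^r]{H}$ is a direct summand, writing the $h_j$ in a basis of $\sqrt[\Z^r]{H}$ is ``a fixed linear change of basis followed by coordinate projection,'' with distortion depending only on $r$. But the adapted basis is not fixed: it depends on $H$, and coordinates with respect to an arbitrary basis of a primitive sublattice are not controlled by Euclidean length. For instance, with $r=s=2$ and $\sqrt[\Z^2]{H}=\Z^2$, the basis $b_1=(1,0)$, $b_2=(N,1)$ writes the unit vector $(0,1)$ as $-N\,b_1+b_2$, so its coordinate vector has length about $N$, unbounded in terms of $\|(0,1)\|$. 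Hence the inequality $\|Me_j\|_2\le C''\|H\|_S$ does not follow from what you wrote; it is only true after choosing a good, $H$-dependent basis of $\sqrt[\Z^r]{H}$ (e.g.\ a Minkowski-reduced one), which requires a genuine lattice-reduction argument you have not supplied. (A smaller imprecision: the norm-realizing generating set of $H$ need not have exactly $s$ elements; you should pick $s$ independent ones, which generate a finite-index subgroup $H'\le H$ with the same isolator and $\det(H)\le[\sqrt[\Z^r]{H}:H']$ --- harmless, since the determinant only grows.)

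Two clean repairs exist. The paper sidesteps the issue by projecting $\Z^k$ onto a coordinate subspace $\Z^\ell$ ($\ell=\rank_\Z H$) on which $H$ injects; then $\pi(\sqrt[\Z^k]{H})\le\sqrt[\Z^\ell]{\pi(H)}=\Z^\ell$ gives $\det(H)\le\det(\pi(H))=|\det X|$ for the $\ell\times\ell$ integer matrix $X$ formed by the projected generators, whose entries are bounded by $\|H\|_S$ because coordinate projections do not increase the standard word norm; the explicit determinant formula finishes. Since there are only finitely many coordinate projections, all constants are genuinely uniform --- this is exactly what your ``fixed change of basis'' was missing. Alternatively, keep your isolator picture but drop coordinates altogether: $\det(H)\le[\sqrt[\Z^r]{H}:H']$ equals the ratio of covolumes of $H'$ and $\sqrt[\Z^r]{H}$ inside their common real span; Hadamard bounds the numerator by $\prod_j\|h_j\|_2$, and the denominator is at least $1$ because the Gram determinant of independent integer vectors is a positive integer. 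Either route closes the gap; your argument as stated does not.
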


\begin{proof}
Since $T(A)$ is always a finite normal subgroup with $T(A) \le \sqrt[A]{H}$, we may assume that $A = \Z^k$. Moreover, we may assume that $S$ is the standard generating subset since the statement is invariant under changing the generating subset.

Let $\ell$ be the rank of $H$. There exist $h_1, \ldots, h_l \in H$ such that $H' = \innp{h_1, \cdots, h_\ell}$ has rank $l$ and $\|h_i\|_S \leq \|H\|_S$. Thus, we may restrict our attention to subgroups of the form $H'$ since $\det(H) \leq \det(H')$ and $\Vert H^\prime \Vert_S \leq \Vert H \Vert_S$. 

Let $\map{\pi}{\Z^k}{\Z^\ell}$ be the projection to the first $\ell$ coordinates. By relabeling coordinates as necessary, we may assume that $\pi|_H$ is injective. Since $\pi|_H$ is injective on $H$, it follows that $\pi|_{\sqrt[A]{H}}$ is injective. Thus, $[\pi(\sqrt[A]{H}) : \pi(H)] = [\sqrt[A]{H} : H ]$, and since $\pi(\sqrt[A]{H}) \leq \sqrt[\Z^\ell]{\pi(H)} = \Z^\ell$, it follows that $\det(\pi(H)) \geq \det(H)$. Thus, without loss of generality, we may assume that $H$ has full rank in $A$. 

Let $X = [ h_1, h_2, \hdots, h_\ell]^T$. Since $H$ is full rank, it follows that $X$ has non-zero determinant. Observing that $\det(X) = [\Z^k:H] = \det(H)$ and that each coefficient of $X$ is bounded by $\|H\|_S$, the formula for the determinant gives the desired bound.
\end{proof}

Applying the above proposition to group morphisms, we have the following.
\begin{cor}\label{abelian_image_bound}
Let $A$ be a finitely generated abelian group with a finite generating subset $S$. There exists a constant $C>0$ such that for every morphism $\map{\varphi}{B}{A}$ of abelian groups, it follows that $$\det(\varphi) \leq C (\| \varphi(B) \|_S)^{\text{rank}(\varphi(B))} \leq C (\Vert \varphi \Vert_S)^{\text{rank}(\varphi(B))}.$$
\end{cor}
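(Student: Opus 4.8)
The plan is to reduce everything to Proposition \ref{det_bound_length_subgroup} applied to the subgroup $H = \varphi(B) \le A$, and then to bound $\|\varphi(B)\|_S$ by the norm of $\varphi$ using only the definitions of the two norms. So this is really just a bookkeeping corollary rather than a new argument.

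First I would address a small well-definedness point. For $\|\varphi\|_S$ to make sense we must fix a finite generating subset $S_B$ of $B$ and read this as $\|\varphi\|_{S_B,S}$ in the sense of Definition \ref{def_normauto}; in particular $B$ is finitely generated, so $\varphi(B)$ is a finitely generated subgroup of the finitely generated abelian group $A$ (equivalently, $A$ is Noetherian). Hence both $\det(\varphi(B))$ and $\|\varphi(B)\|_S$ are defined, and $\det(\varphi) = \det(\varphi(B))$ by the definition of the determinant of a morphism.

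Next I would apply Proposition \ref{det_bound_length_subgroup} with $H = \varphi(B)$: there is a constant $C > 0$ depending only on $A$ and $S$ such that
$$
\det(\varphi) = \det(\varphi(B)) \le C\,\bigl(\|\varphi(B)\|_S\bigr)^{\operatorname{rank}_\Z(\varphi(B))},
$$
which is exactly the first claimed inequality. For the second inequality, observe that $\varphi(S_B)$ is a finite generating subset of $\varphi(B)$, and for each $s \in S_B$ we have $\|\varphi(s)\|_S \le \|\varphi\|_{S_B,S}$ by Definition \ref{def_normauto}; hence $\|\varphi(S_B)\|_S \le \|\varphi\|_{S_B,S}$, and therefore $\|\varphi(B)\|_S \le \|\varphi(S_B)\|_S \le \|\varphi\|_{S_B,S}$ by Definition \ref{def_normsubgroup}. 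Substituting this bound into the displayed inequality yields the corollary, since $\operatorname{rank}(\varphi(B))$ is the same exponent on both sides.

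I do not expect any genuine obstacle here; the only point that warrants a sentence of care is the remark that $\varphi(B)$ is automatically finitely generated, so that the quantities $\det(\varphi(B))$ and $\|\varphi(B)\|_S$ entering Proposition \ref{det_bound_length_subgroup} are legitimate. Everything else is a direct unwinding of Definitions \ref{def_normsubgroup} and \ref{def_normauto}.
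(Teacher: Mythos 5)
Your proposal is correct and follows essentially the same route as the paper, which simply invokes Proposition \ref{det_bound_length_subgroup} for the image $\varphi(B)$; your extra remarks on finite generation of $\varphi(B)$ and the bound $\|\varphi(B)\|_S \leq \|\varphi\|_{S_B,S}$ are just a careful unwinding of Definitions \ref{def_normsubgroup} and \ref{def_normauto} that the paper leaves implicit.
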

\begin{proof}
Our result follows from Proposition \ref{det_bound_length_subgroup} since the value only depends on the image.
\end{proof}

The following example shows that one cannot bound the norm of $\varphi(B)$ by the determinant of $\varphi$.
\begin{ex}
	Consider the group $\Z^2$ with the standard generating subset $S$ and subgroup $H = \langle \left( 1, n \right) \rangle$. A computation shows that $$\Vert H \Vert_S = n + 1,$$ but $\det(H) = 1$.
\end{ex}

The following lemma will be useful to estimate the norm of the kernel of group morphisms.
\begin{lemma}\label{unique_preimage_bound}
Let $S$ be a generating subset for $\Z^k$. There exists a constant $C > 0$ such that for every injective group morphism $\map{\varphi}{\Z^k}{\Z^k}$ and every element $x \in \Z^k$, it holds that $$\|x\|_S \leq C \frac{ \: (\max\{ \|\varphi\|_S, \|\varphi(x)\|_S \})^k}{\vert \det(\varphi) \vert}.$$
\end{lemma}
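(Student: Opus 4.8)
The plan is to use Cramer's rule. Given an injective group morphism $\map{\varphi}{\Z^k}{\Z^k}$, pick a basis and let $M \in \Mat_k(\Z)$ be the matrix of $\varphi$, so that $\varphi(x) = Mx$ when we identify elements of $\Z^k$ with column vectors. Since $\varphi$ is injective, $\det(M) = \det(\varphi) \neq 0$, so $M$ is invertible over $\Q$ and $x = M^{-1} \varphi(x)$. By Cramer's rule, each coordinate of $x$ is $x_j = \det(M_j)/\det(M)$, where $M_j$ is obtained from $M$ by replacing the $j$-th column with the vector $\varphi(x)$.

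First I would bound $|\det(M_j)|$. Every entry of $M$ is bounded in absolute value by a constant times $\|\varphi\|_S$ (entries of columns are coordinates of $\varphi(s)$ for $s$ in the chosen basis, whose $S$-norms are at most $\|\varphi\|_S$), and every entry of the replaced column is bounded by a constant times $\|\varphi(x)\|_S$. So every entry of $M_j$ is at most a constant times $\max\{\|\varphi\|_S, \|\varphi(x)\|_S\}$. The Leibniz expansion of the determinant of a $k \times k$ matrix with entries bounded by $B$ gives $|\det(M_j)| \leq k! \, B^k$. Hence $|x_j| \leq k! \, C^k \, (\max\{\|\varphi\|_S, \|\varphi(x)\|_S\})^k / |\det(\varphi)|$ for each $j$, and since $\|x\|_S$ is comparable (up to a constant depending only on $k$ and $S$) to $\sum_j |x_j|$ or $\max_j |x_j|$, absorbing all the $k$- and $S$-dependent constants into a single $C$ yields the claimed inequality.

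The only mild subtlety — rather than a genuine obstacle — is bookkeeping the change between the word metric $\|\cdot\|_S$ and the $\ell^\infty$ (or $\ell^1$) norm on coordinates: these differ only by multiplicative constants depending on $k$ and $S$ (this is the standard quasi-isometry between any two word metrics on $\Z^k$, already used implicitly in the earlier lemmas), so they can be folded into $C$. One should also note the edge case $k=0$ or $x = 0$, where the statement is trivial. No step here requires anything beyond elementary linear algebra over $\Z$, so I do not anticipate a hard part; the content is simply packaging Cramer's rule with crude determinant bounds.
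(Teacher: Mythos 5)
Your proposal is correct and follows essentially the same route as the paper: reduce to the standard generating subset, apply Cramer's rule so that $x_i = \det(\varphi_i)/\det(\varphi)$ with $\varphi_i$ the matrix obtained by replacing the $i$-th column by $\varphi(x)$, and bound $|\det(\varphi_i)|$ via the explicit determinant formula with entries at most $\max\{\|\varphi\|_S,\|\varphi(x)\|_S\}$. The bookkeeping between the word metric and coordinate norms that you mention is exactly the reduction the paper makes at the outset, so there is no discrepancy.
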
\begin{proof}
	The statement is invariant under changing the generating subset, so we assume that $S$ is the standard generating subset. Writing $x = (x_i)_{i=1}^k$, Cramers rule implies
$$
x_i = \frac{\det(\varphi_i)}{\det  (\varphi)}
$$
where $\varphi_i$ is the morphism given by replacing the $i$-th column of the matrix representative of $\varphi$ by $\varphi(x)$. Moreover, each entry of the matrix representative of $\varphi_i$ is bounded by $\max\{\|\varphi\|_S,\| \varphi(x) \|_S\}.$ Thus, the explicit formula for the determinant of a $(k \times k)-$matrix gives our result.
\end{proof}

\begin{lemma}\label{ker_morphism_inequalityab}
Let $A$ be a finitely generated abelian group of rank $k > 0$ with a finite generating subset $S'$ and $m$ a fixed natural number. There exists a constant $C>0$ such that for every group morphism $\varphi: \Z^\ell \to A$ and generating subset $S$ with $\vert S \vert \leq m$ for $\Z^\ell$ it holds that $$\|\ker(\varphi)\|_S \leq C (\|\varphi\|_{S,S'})^{k}.$$
Moreover, there exists $m^\prime$, not depending on $\varphi$, such that this bound can be achieved with a generating subset with $\leq m^\prime$ elements.
 \end{lemma}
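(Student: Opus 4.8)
The plan is to reduce to the case of a free abelian target and then to a square injective matrix, where Lemma \ref{unique_preimage_bound} applies directly. First I would dispose of the torsion: since $T(A)$ is a fixed finite subgroup, replacing $A$ by $A/T(A)\cong\Z^k$ changes $\varphi$ only by post-composition with the quotient map, which affects $\|\varphi\|_{S,S'}$ by a bounded multiplicative constant, and enlarges $\ker(\varphi)$ only by a subgroup of index dividing $|T(A)|$; by Lemma \ref{congruence_subgroup_of_finite_index_subgroup} (or directly, since $A$ is abelian) this costs at most a bounded factor in the norm of the kernel as well. So I may assume $A=\Z^k$ and that $S'$ is the standard basis.

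Next, set $K=\ker(\varphi)\le\Z^\ell$ and let $r=\rank(K)=\ell-\rank(\varphi(\Z^\ell))$. After relabelling the coordinates of $\Z^\ell$, I may assume $K$ projects injectively onto the first $r$ coordinates; write $q\colon\Z^\ell\to\Z^r$ for that projection. The image $q(K)$ is a finite-index subgroup of $\Z^r$, and I claim $[\Z^r:q(K)]$ is polynomially bounded in $\|\varphi\|_{S,S'}$: indeed $q(K)$ is the kernel of the induced map $\bar\varphi\colon\Z^r\cong\Z^\ell/\langle e_{r+1},\dots,e_\ell\rangle\to\Z^k$ composed appropriately — more cleanly, I would argue that $\Z^\ell/K$ embeds in $\Z^k$ with $\|\Z^\ell/K\|$ controlled, so its torsion subgroup (equivalently, the ``defect'' making $q(K)$ proper in $\Z^r$) has order bounded by a determinant of a matrix with entries $\le C\|\varphi\|_{S,S'}$, hence by $C(\|\varphi\|_{S,S'})^{k}$ via the determinant expansion as in Proposition \ref{det_bound_length_subgroup}. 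Then a finite-index subgroup $q(K)\le\Z^r$ of index $D$ contains $D\Z^r$, and I can find generators of $q(K)$ of norm $\le D$; pulling these back through the injective $q|_K$ and bounding the preimages with Lemma \ref{unique_preimage_bound} (applied to $q|_K$, whose matrix has small entries and determinant $\pm D$) gives generators of $K$ of norm $\le C(\|\varphi\|_{S,S'})^{k}$. The ``moreover'' clause follows because $K$ has rank $r\le\ell\le$ (a bound depending only on $m$, since a generating set of size $\le m$ forces $\ell\le m$ up to passing to a cleaner generating set), so one may always take $\le m'$ generators with $m'$ depending only on $m$.

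The main obstacle I anticipate is the bookkeeping in the middle step: controlling the index $[\Z^r:q(K)]$ — equivalently the torsion of $\Z^\ell/K\hookrightarrow\Z^k$ — purely in terms of $\|\varphi\|_{S,S'}$, while the rank $r$ and the relevant coordinate projection depend on $\varphi$. I would handle this by choosing, among all size-$\le m$ generating sets of $\Z^\ell$, a ``nice'' one (bounded in terms of $m$ only), writing $\varphi$ as an integer $k\times\ell$ matrix with entries $\le\|\varphi\|_{S,S'}$, and invoking Smith normal form: the product of the nonzero elementary divisors is $\le C(\|\varphi\|_{S,S'})^{\min(k,\ell)}\le C(\|\varphi\|_{S,S'})^{k}$ and controls exactly the torsion of the cokernel restricted to the image, while the kernel lattice is spanned by the ``zero-divisor'' columns of the change-of-basis matrix, whose entries are again polynomially bounded. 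This makes both $r$ and the generating set of $K$ uniform, and the final norm bound $C(\|\varphi\|_{S,S'})^{k}$ drops out. (One should double-check the exponent is $k$ and not $\min(k,\ell)$; since $\ell$ is unbounded this matters, and it works because the nonzero elementary divisors are at most $k$ in number, each bounded by $C\|\varphi\|_{S,S'}$, so their product — which is what bounds the kernel generators via Cramer — is $\le C(\|\varphi\|_{S,S'})^{k}$.)
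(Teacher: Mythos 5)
Your core engine---producing kernel vectors by Cramer's rule so that their size is governed by a maximal nonsingular minor, which is at most $C(\|\varphi\|_{S,S'})^{k}$---is exactly the engine of the paper's proof (there one takes $D=\det(\varphi(e_1),\dots,\varphi(e_k))$, solves $\varphi(x_j)=D\,\varphi(e_{k+j})$ via Lemma \ref{unique_preimage_bound}, and uses $v_j=\iota(x_j)-D\,e_{k+j}$). The genuine gap is in your treatment of the arbitrary generating subset $S$. You propose to pass to a ``nice'' basis of $\Z^\ell$ and then write $\varphi$ as a matrix with entries at most $\|\varphi\|_{S,S'}$; but the matrix of $\varphi$ in a nice basis is controlled by the norm of $\varphi$ with respect to that nice basis, not by $\|\varphi\|_{S,S'}$, and a bound on $\|\ker(\varphi)\|$ in the nice norm does not transfer back to the $S$-norm with a constant independent of $S$ and $\varphi$. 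Concretely, take $\ell=2$, $A=\Z$ (so $k=1$), $S=\{(1,n),(0,1)\}$ and $\varphi(a,b)=b-na$: then $\|\varphi\|_{S,S'}=1$, while the standard-basis matrix of $\varphi$ is $(-n,\ 1)$ and $\ker(\varphi)=\langle(1,n)\rangle$ has standard norm $n+1$ (its $S$-norm is $1$). So no basis change ``bounded in terms of $m$ only'' can work. The missing idea, which is how the paper handles this, is to precompose with the surjection $f:\Z^m\to\Z^\ell$ sending the standard basis to $S$: the matrix of $\varphi\circ f$ in the standard basis of $\Z^m$ genuinely has entries bounded by $\|\varphi\|_{S,S'}$, one proves the bound there, and then $f$ carries generators of $\ker(\varphi\circ f)$ onto generators of $\ker(\varphi)$ with $\|f(x)\|_S\leq\|x\|$.

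Two further steps of your sketch would also not survive scrutiny as written. In the projection route, applying Lemma \ref{unique_preimage_bound} to $q|_K$ is circular: to speak of its ``matrix with small entries and determinant $\pm D$'' you already need a basis of $K$ of controlled size, which is the statement being proved; moreover $\Z^\ell/K$ embeds in $\Z^k$ and is torsion-free, so the defect $[\Z^r:q(K)]$ is not a torsion subgroup of $\Z^\ell/K$ and must be bounded by a separate argument. In the Smith normal form variant, the assertion that the unimodular change-of-basis matrix has polynomially bounded entries is not automatic (naive SNF reduction can blow up the transforming matrices, and justifying a polynomial bound essentially amounts to redoing the Cramer construction), and the claim that each nonzero elementary divisor is at most $C\|\varphi\|_{S,S'}$ is false---the matrix with rows $(2,1)$ and $(1,2)$ has elementary divisors $1$ and $3$---only their product, the top determinantal divisor, is bounded by a maximal minor, as in Proposition \ref{det_bound_length_subgroup}. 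The paper's construction sidesteps all of this: it never needs a basis of $K$ in advance, only the explicit vectors $v_j$, which generate a finite-index subgroup of the kernel, after which a short lattice argument upgrades them to generators of $\ker(\varphi)$ itself.
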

\begin{proof}
Without loss of generality, we may assume that $A$ is torsion-free, so $A = \Z^k$. The statement is invariant under changing the generating subset of $A$, so we may assume that $S^\prime$ is the standard generating subset. Additionally, we assume that $\varphi(\Z^\ell)$ has rank $k$ by taking a quotient of $A$ if necessary. 

First, we prove the lemma for the standard generating subset $S = \{e_1, \ldots, e_l \}$ for $\Z^l$. We assume that $\varphi(e_i)$ are linearly independent for $1 \leq i \leq k$, and let $D = \det(\varphi(e_1),\varphi(e_2),\hdots,\varphi(e_k))$.
Note that there exists a constant $C_0$ such that $\vert D \vert \leq C_0 \Vert \varphi \Vert_{S,S^\prime}^k$.

For each $1 \leq j \leq l - k$, we construct a vector $v_j \in \ker (\varphi)$. Apply Lemma \ref{unique_preimage_bound} gives a $x_j \in \Z^k$ such that $\varphi(x_j) = D \: \varphi(e_{k + j})$ and $\vert x_j \vert_S \leq C_1 (\|\varphi\|_{S,S'})^{k}$. Letting $\map{\iota}{\Z^k}{\Z^l}$ be the natural inclusion, the vector $v_j = \iota(x_j) - D \: e_{k + j}$ is in $\ker(\varphi)$ by construction. The vectors $v_j$ generate a finite index subgroup of $\ker(\varphi)$. Since $S$ is assumed to be the standard generating subset, an easy computation shows that $\|\ker(\varphi)\|_S \leq m \: \max\{\|v_j \| \}$. Therefore, $\|\ker(\varphi)\|_S \leq C (\|\varphi\|_{S,S'})^{k}$ and the bound $m^\prime$ also follows from this construction.

Now assume that $S = \{s_1, \ldots, s_m\}$ is an arbitrary generating subset for $\Z^\ell$. Consider the morphism $f: \Z^m \to \Z^\ell$ which maps $e_i$ to the generator $s_i$. For the composition $\varphi \circ f$, we have $$\|\ker(\varphi \circ f )\|_S \leq C (\|\varphi\|_{S,S'})^{k}.$$ Now $f$ maps the generators of this kernel to generators of the kernel of $\varphi$, and moreover, $\Vert f(x) \Vert_{S} \leq \Vert x \Vert$ for every $x \in \Z^m$. Thus, the lemma follows. 
\end{proof}

\begin{ex}
	Fix $n \in \N$ and take $n$ distinct primes $p_1 < \ldots < p_n$. Consider the matrix  given by $$A_j = \begin{pmatrix} p^j_1 & 0 & \dots & 0 & 1 \\0 & p^j_2 & \dots & 0 & 1 \\ \vdots & \vdots & \ddots & \vdots & \vdots \\ 0 & 0 & \dots & p^j_n & 1\end{pmatrix}$$ and the corresponding linear map $\varphi_j: \Z^{n+1} \to \Z^{n}$. Let $S$ and $S^\prime$ be the standard generating sets for $\Z^{n+1}$ and $\Z^n$ respectively. If we write $k = p_1 \dots p_n$, then the kernel of $\varphi_j$ is generated by the element $\left( \frac{k^j}{p^j_1}, \ldots, \frac{k^j}{p^j_n}, -k^j\right)$. A computation shows that $$\Vert \ker(\varphi_j) \Vert_S = \Vert \left( \frac{k^j}{p^j_1}, \ldots, \frac{k^j}{p^j_n}, -k^j\right) \Vert_S \geq k^j \geq p_n^{jn}.$$ On the other hand, $\Vert \varphi_j \Vert_{S^\prime} = \max \{ p_n^j, n \}$. This example implies that the bound in Proposition \ref{ker_morphism_inequalityab} is optimal.
\end{ex}

\begin{prop}
	\label{rootcommutator}
For every $m \geq 0$, there exists a constant $C$ such that for every nilpotent group $N$ and any generating subset $S$ with $\vert S \vert \leq m$, it holds that 
$$\Vert \sqrt[N]{ \gamma_2(N) } \Vert_S \leq C$$
\end{prop}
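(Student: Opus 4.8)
The plan is to analyze $\sqrt[N]{\gamma_2(N)}$ via the short exact sequence
$$\short{\gamma_2(N)}{\sqrt[N]{\gamma_2(N)}}{T(N_{\text{ab}})}$$
where $N_{\text{ab}} = N/\gamma_2(N)$ and $T(N_{\text{ab}})$ denotes its torsion subgroup, which is finite. Indeed $x \in N$ satisfies $x^k \in \gamma_2(N)$ for some $k$ precisely when $\pi_{\text{ab}}(x)$ has finite order, so $\sqrt[N]{\gamma_2(N)} = \pi_{\text{ab}}^{-1}(T(N_{\text{ab}}))$ and $\sqrt[N]{\gamma_2(N)}/\gamma_2(N) \cong T(N_{\text{ab}})$. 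Hence a finite generating subset of $\sqrt[N]{\gamma_2(N)}$ is obtained by combining a finite generating subset of $\gamma_2(N)$ with a choice of preimages in $N$ of a finite generating subset of $T(N_{\text{ab}})$ --- every such preimage automatically lies in $\sqrt[N]{\gamma_2(N)}$ --- so it is enough to bound the $S$-length of these two families by a constant.

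For the first family I would use that, since $N$ is generated by the at most $m$ elements of $S$ and is nilpotent, $\gamma_2(N)$ is generated as a group by the finitely many images of the basic commutators in the elements of $S$ of weight between $2$ and the nilpotency class of $N$; bracketing these commutators economically shows that each has $S$-length bounded by an explicit constant, so $\Vert \gamma_2(N) \Vert_S$ is bounded. For the second family I would pass to the abelianization. Let $\ell = \rank(N_{\text{ab}}) \le m$, and let $\map{\theta}{\Z^m}{N_{\text{ab}}/T(N_{\text{ab}}) \cong \Z^\ell}$ be the composite of the surjection $\Z^m \twoheadrightarrow N_{\text{ab}}$ carrying the standard basis to the images of $S$ with the projection $N_{\text{ab}} \twoheadrightarrow N_{\text{ab}}/T(N_{\text{ab}})$. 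Then $\ker\theta$ is precisely the preimage in $\Z^m$ of $T(N_{\text{ab}})$, so Lemma \ref{ker_morphism_inequalityab} produces a generating subset $\{v_1,\dots,v_t\}$ of $\ker\theta$ with each $\Vert v_j\Vert_1$ bounded; setting $w_j = s_1^{v_{j,1}}\cdots s_m^{v_{j,m}}$ yields elements of $\sqrt[N]{\gamma_2(N)}$ with $\Vert w_j\Vert_S \le \Vert v_j\Vert_1$ whose images generate $T(N_{\text{ab}})$. Combined with the first family, this gives the bounded generating subset we want.

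The step I expect to be the main obstacle is making the second family uniform: to invoke Lemma \ref{ker_morphism_inequalityab} one needs to control the norm of $\theta$, equivalently to show that the torsion subgroup of an abelian group on at most $m$ generators is always generated by cosets of integer vectors of controlled size; one also has to verify, using the comparison of $S$-norms for different finite generating subsets from Section 2, that the resulting constant is independent of the particular $S$ with $\vert S\vert \le m$. The basic-commutator step, by contrast, is routine.
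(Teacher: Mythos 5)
Your route is in essence the paper's: the paper also bounds $\Vert \sqrt[N]{\gamma_2(N)}\Vert_S$ by mapping $\Z^{|S|}$ onto the torsion-free quotient $N/\sqrt[N]{\gamma_2(N)}$ (which is your $N_{\text{ab}}/T(N_{\text{ab}})$), invoking Lemma \ref{ker_morphism_inequalityab} to get short generators of the kernel, lifting them to short elements of $N$, and adjoining a generating set of $\gamma_2(N)$. However, as a proof of the stated proposition your proposal has two genuine gaps. The first is the step you call routine. Basic commutators in the elements of $S$ of weight $w$ have $S$-length growing at least linearly in $w$, and the weights you need run up to the nilpotency class $c(N)$, which is \emph{not} bounded in terms of $m$: the proposition quantifies over all nilpotent groups on at most $m$ generators, of arbitrary class. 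So your argument yields $\Vert \gamma_2(N)\Vert_S \leq C(c(N))$, not a constant depending only on $m$, and the uniformity that the statement (and its downstream use in Proposition \ref{ker_morphism_inequality}) requires is lost exactly there. The paper does not use basic commutators at all; it asserts the class-independent bound $\Vert \gamma_2(N)\Vert_S \leq 4$ (generation by commutators of the generators), and if you keep your structure you must prove some class-independent generation statement of this kind rather than cite the basic-commutator generating set.

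The second gap is the one you flag yourself, and it is not a side issue but the heart of the argument: the constant in Lemma \ref{ker_morphism_inequalityab} depends on the fixed target $(A,S')$, whereas in your application the target $N_{\text{ab}}/T(N_{\text{ab}})$ varies with $N$. If you give it the standard basis of $\Z^\ell$ you have no control on $\Vert\theta\Vert_{S,S'}$ (the images of the $s_i$ can be arbitrarily long primitive vectors); if you give it the images of $S$ as generators then $\Vert\theta\Vert_{S,S'}=1$, but the lemma's constant now depends on that generating set and hence on $N$. You acknowledge this and leave it open, so the proposal is not a complete proof. (To be fair, the paper's own proof is terse at precisely this point: it applies Lemma \ref{ker_morphism_inequalityab} to the map $\Z^k \to N/\sqrt[N]{\gamma_2(N)}$ without specifying the generating set of the target or why the constant is uniform over all $N$ with $|S| \leq m$. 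So you have faithfully reconstructed the paper's argument, but a complete write-up has to resolve this uniformity issue, not merely record it.)
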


\begin{proof}
If $s_1, \ldots, s_k$ are the generators of $N$, then consider the group morphism $\varphi: \Z^k \to \faktor{N}{ \sqrt[N]{ \gamma_2(N) }}$ with $\varphi(e_i) = s_i  \sqrt[N]{ \gamma_2(N) }$. From Lemma \ref{ker_morphism_inequalityab}, it follows that there exists a $C$ such that $\Vert \ker(\varphi) \Vert \leq C$. For every generator $x_j \in \Z^k$ for $\ker(\varphi)$, fix an element $y_j \in N$ such that $\Vert y_j \Vert_S \leq \Vert x_j \Vert$ and $\varphi(x_j) = y_j \sqrt[N]{ \gamma_2(N) }$. The group $\sqrt[N]{ \gamma_2(N) }$ is now generated by the $y_j$ and $\gamma_2(N)$, where $\Vert \gamma_2(N) \Vert_S \leq 4.$  
\end{proof}
	
\begin{prop}\label{ker_morphism_inequality}
	Let $A$ be a finitely generated abelian group of rank $k > 0$ with a finite generating subset $S'$. For every $m > 0$, there exists a constant $C>0$ such that if $N$ is a $\mathcal{F}$-group with a finite generating subset $S$ with $\vert S \vert \leq m$ and $\map{\varphi}{N}{A}$ is a group morphism, then $$\|\ker(\varphi)\|_S \leq C (\|\varphi\|_{S,S'})^{\text{rank}_\Z(A)}.$$
	Moreover, there exists $m^\prime$ such that this bound can be achieved with a generating subset with $\leq m^\prime$ elements.
\end{prop}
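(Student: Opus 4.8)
My plan is to reduce to the abelian case, Lemma~\ref{ker_morphism_inequalityab}, by replacing the nilpotent source $N$ with its maximal torsion-free abelian quotient. One may assume $N$ and $\varphi$ are nontrivial (otherwise $\ker(\varphi)=N$ has norm $\le 1$), so $\|\varphi\|_{S,S'}\ge 1$. First I would set $L=\sqrt[N]{\gamma_2(N)}$ and invoke Proposition~\ref{rootcommutator}: it gives a generating set of $L$ whose elements have $S$-norm at most a constant $C_0$ depending only on $m$, and since $N$ is $m$-generated there are only boundedly many such elements, so $L$ is generated by boundedly many elements of norm $\le C_0$. Passing to the isolator here is essential: $\gamma_2(N)$ itself does \emph{not} have bounded norm, since generating it requires iterated commutators whose word lengths grow with the nilpotency class of $N$, which is unbounded in this statement. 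Writing $\rho\colon N\to\bar N:=N/L$, the group $\bar N$ is finitely generated torsion-free abelian, so $\bar N\cong\Z^{\ell}$ with $1\le\ell\le m$, and $\bar S:=\rho(S)$ generates $\bar N$ with $|\bar S|\le m$.

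Since $A$ is abelian, $\gamma_2(N)\le\ker(\varphi)$, and $F:=\varphi(L)$ is a finite subgroup of $T(A)$ (each element of $L$ has a power in $\gamma_2(N)$, so its image is torsion). Thus $\varphi$ need not factor through $\rho$, but it induces a morphism $\tilde\varphi\colon\bar N\to A/F$, and a short computation shows $\rho(\ker(\varphi))=\ker(\tilde\varphi)$. Since $A/F$ still has rank $k>0$ and $F$ runs over the finitely many subgroups of $T(A)$, I can apply Lemma~\ref{ker_morphism_inequalityab} to $\tilde\varphi\colon\Z^{\ell}\to A/F$, equipped with the generating set $\pi_F(S')$ on the target, with constants uniform in $\varphi$. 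Using $\|\tilde\varphi\|_{\bar S,\pi_F(S')}\le\|\varphi\|_{S,S'}$, this produces a generating set $\bar u_1,\dots,\bar u_t$ of $\ker(\tilde\varphi)$ with $t$ bounded in terms of $m$ and $A$, and $\|\bar u_j\|_{\bar S}\le C_1\,(\|\varphi\|_{S,S'})^{k}$.

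It remains to promote this to a generating set of $\ker(\varphi)$ in $N$. Lift each $\bar u_j$ to $u_j\in N$ with $\|u_j\|_S\le\|\bar u_j\|_{\bar S}$ (the same word in $S$); then $\varphi(u_j)\in F$, possibly nontrivial. I would correct this by multiplying by a fixed preimage $l_f\in L$, of bounded norm, for each of the finitely many $f\in F$ --- possible because $\varphi$ carries the bounded-norm generators of $L$ onto a generating set of the finite group $F$, so every $f\in F$ is a word of length $\le|T(A)|$ in them. Then $w_j:=u_j\,l_{\varphi(u_j)}^{-1}\in\ker(\varphi)$ satisfies $\rho(w_j)=\bar u_j$ and $\|w_j\|_S\le C_2\,(\|\varphi\|_{S,S'})^{k}$. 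Finally, $L_0:=\ker(\varphi)\cap L$ has index $|F|\le|T(A)|$ in $L$, so by Reidemeister--Schreier (using $\|L\|_S\le C_0$) it has boundedly many generators of bounded norm; and since $\rho$ sends $\langle w_1,\dots,w_t\rangle$ onto $\rho(\ker(\varphi))$ with ``error'' contained in $L_0$, the $w_j$ together with the Schreier generators of $L_0$ generate $\ker(\varphi)$. Collecting the norm bounds and counting generators yields the proposition, including the ``moreover'' clause. I expect the only real obstacle to be the torsion bookkeeping in this last step: checking that $F$, the corrections $l_f$, and the generators of $L_0$ cost only constants depending on $m$ and $A$ --- not on $\varphi$, and not on the nilpotency class of $N$ --- so that the full dependence on $\|\varphi\|_{S,S'}$, with exponent exactly $k=\rank_{\Z}(A)$, is inherited from Lemma~\ref{ker_morphism_inequalityab}.
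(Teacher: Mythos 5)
Your proposal is correct (modulo the paper's own Proposition \ref{rootcommutator} and Lemma \ref{ker_morphism_inequalityab}, which you are entitled to quote) and follows essentially the same route as the paper: reduce modulo $\sqrt[N]{\gamma_2(N)}$, whose norm and number of generators are controlled by Proposition \ref{rootcommutator}, and then apply the abelian Lemma \ref{ker_morphism_inequalityab} to the induced map on the resulting quotient $\cong \Z^{\ell}$. The only difference is that the paper disposes of the torsion of $A$ in one line (``we can assume that $A$ is torsion-free''), whereas you keep $T(A)$ and absorb it explicitly via the finite subgroup $F=\varphi\bigl(\sqrt[N]{\gamma_2(N)}\bigr)$, bounded correcting elements $l_f$, and a Reidemeister--Schreier argument for $\ker(\varphi)\cap\sqrt[N]{\gamma_2(N)}$ --- a more detailed treatment of the same argument rather than a genuinely different one.
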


\begin{proof}
We can assume that $A$ is torsion-free. First note that $\sqrt[N]{\gamma_2(N)}$ is a subgroup of $\ker{\varphi}$ whose norm is bounded as described in Proposition \ref{rootcommutator}. So it suffices to bound the norm of the kernel of $\bar{\varphi}: \faktor{N}{\sqrt[N]{\gamma_2(N)}} \to A$, and thus, Lemma \ref{ker_morphism_inequalityab} gives our proposition. The statement about the number of generators also follows from the same lemma.
\end{proof}

\begin{thm}\label{twisted_pullback_norm_bound}
Let $N$ be a $\mathcal{F}$-group with a finite generating subset $S$, and let $i>0$. Then there exists a natural number $k$ and constant $C > 0$ such that  $\|N_i^\varphi\|_S \leq C \: (\|\varphi\|_S)^{k}$ for any $\varphi \in \Aut(N)$.
\end{thm}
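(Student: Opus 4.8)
The plan is to proceed by downward induction on $i$, using the filtration $N = N_1^\varphi \geq N_2^\varphi \geq \dots \geq N_{c+1}^\varphi$ together with the short exact sequences
$$
1 \longrightarrow N_{i+1}^\varphi \longrightarrow N_i^\varphi \xrightarrow{\ \bar{\psi}_{\varphi,i}\ } \faktor{\ga_i(N)}{\ga_{i+1}(N)} .
$$
The base case is $i$ so large that $\ga_i(N) = 1$, where $N_i^\varphi = \{x \suchthat \varphi(x) = x\} = \ker(\varphi - \id)$ on the abelian group $N = \ga_c(N) \cdots$; more honestly, once $i > c$ we have $N_i^\varphi = \{x \suchthat \varphi(x)=x\}$, and this is the kernel of the morphism $x \mapsto x\,\varphi(x)^{-1}$ from $N$ to $N$ (which need not be a homomorphism), so instead I would start the induction at $i = c+1$ directly: $N_{c+1}^\varphi = \ker(\psi_{\varphi,c+1})$ is not quite right either. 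The cleanest formulation is to induct from $i = c$ downward and treat $N_{c+1}^\varphi$ separately — but in fact the whole chain is recovered from the maps $\psi_{\varphi,i}$, so I only need to bound $\|N_c^\varphi\|_S$ first and then climb up.

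Concretely, first I would handle the top: $N_c^\varphi = \{x \suchthat x\,\varphi(x)^{-1} \in \ga_c(N)\}$ is the kernel of the composite morphism $N \to N/\ga_c(N) \xrightarrow{\, y \mapsto y\,\bar\varphi(y)^{-1}\,} \cdots$ — again not a homomorphism on the nose, but modulo $\ga_2$ it is, since $N/\ga_c(N)$ has strictly smaller class and we may iterate. The honest inductive statement I will prove is: for each $i$, the subgroup $N_i^\varphi$ is the kernel of a group morphism $\Theta_i \colon N_{i-1}^\varphi \to \faktor{\ga_{i-1}(N)}{\ga_i(N)}$ (namely $\bar\psi_{\varphi,i-1}$, restricted appropriately), whose norm $\|\Theta_i\|_{S_{i-1}, S'}$ is controlled by $\|\varphi\|_S$ and $\|N_{i-1}^\varphi\|_{S}$ in a polynomial way: indeed $\Theta_i(x) = x\,\varphi(x)^{-1} \bmod \ga_i(N)$, and for a generator $x$ of $N_{i-1}^\varphi$ with $\|x\|_S \leq \|N_{i-1}^\varphi\|_S$ we get $\|\Theta_i(x)\|_{S'} \leq \|x\|_S + \|\varphi(x)\|_S \leq (1 + \|\varphi\|_S)\,\|N_{i-1}^\varphi\|_S$. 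Then I apply Proposition \ref{ker_morphism_inequality} to $\Theta_i$: since $\faktor{\ga_{i-1}(N)}{\ga_i(N)}$ is a finitely generated abelian group of fixed rank $\leq h(N)$, and since $N_{i-1}^\varphi$ admits a generating set of size bounded independently of $\varphi$ (this needs to be tracked through the induction via the ``moreover'' clause of Proposition \ref{ker_morphism_inequality}), we obtain
$$
\|N_i^\varphi\|_S \ \leq\ C\,\pr{\|\Theta_i\|_{S_{i-1},S'}}^{\rank_\Z\pr{\faktor{\ga_{i-1}(N)}{\ga_i(N)}}} \ \leq\ C'\,\pr{\pr{1+\|\varphi\|_S}\,\|N_{i-1}^\varphi\|_S}^{h(N)}.
$$

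The induction then closes: $\|N_1^\varphi\|_S = \|N\|_S$ is a constant, and each step raises the bound to a fixed power and multiplies by a power of $(1+\|\varphi\|_S)$, so after $c$ steps $\|N_i^\varphi\|_S \preceq (\|\varphi\|_S)^{k}$ for $k$ depending only on $c$ and $h(N)$ (roughly $k = h(N)^{i-1}$). One technical point I must be careful about: Proposition \ref{ker_morphism_inequality} requires an a priori bound $m$ on the size of the generating set of the source group $N_{i-1}^\varphi$, and its conclusion only promises a new generating set of size $\leq m'$; so I must feed the ``moreover'' clause forward at every stage, starting from $|S| \leq m$ for $N_1^\varphi = N$, to guarantee that the $m$ appearing at stage $i$ is bounded by a quantity depending only on $N$ and not on $\varphi$. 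I expect this bookkeeping — propagating a uniform bound on the number of generators through the $c$-fold iteration while simultaneously tracking that $\Theta_i$ really is the restriction of a genuine homomorphism — to be the main obstacle; the norm estimates themselves are immediate from the triangle inequality and the cited propositions.
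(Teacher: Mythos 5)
Your final, ``honest'' formulation is essentially the paper's own proof: an upward induction starting from $N_1^\varphi = N$, viewing $N_{i+1}^\varphi$ as the kernel of the homomorphism $\psi_{\varphi,i}\colon N_i^\varphi \to \faktor{\ga_i(N)}{\ga_{i+1}(N)}$ (already shown to be a morphism in Section \ref{sec:twisted}, so that worry is moot), bounding its norm linearly in $\|\varphi\|_S$ times the inductively controlled norm of the generators of $N_i^\varphi$, applying Proposition \ref{ker_morphism_inequality}, and propagating the uniform bound on the number of generators through the ``moreover'' clause exactly as the paper does. The preliminary hesitation about downward induction and base cases is unnecessary, but the argument you settle on is correct and matches the paper's.
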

\begin{proof}
We proceed by induction on $i$ that every $N_i^{\varphi}$ satisfies the condition of the theorem, with the additional assumption that the number of elements in such a generating subset is uniformly bounded by a constant $m_i$. For $i=1$, we have that $\ga_1(N) = N$, and thus, $N_1^\varphi  = N$. Our theorem is now evident for this case with $m_1 = \vert S \vert$ and $k = 1$.

Now assume that the result holds for the group $N_i^\varphi$ with the constant $C_1$, integer $k_1$ and number of generators $m_i$. By assumption, there exists a finite generating subset $S'$ for $N_{i}^\varphi$ with $|s|_S \leq C_1 (\|\varphi\|_S)^{k_1}$ for all $s \in S'$ and $\vert S^\prime \vert \leq m_i$.  

For the quotient $\faktor{\ga_i(N)}{\ga_{i+1}(N)}$, we fix the generating subset $S_i$ given by the projections of commutators of length $i$. Denote $r_i$ to be the rank of $\faktor{\ga_{i}(N)}{\ga_{i+1}(N)}$ and fix the constant $C_2$ as in Proposition \ref{ker_morphism_inequality} for the nilpotency class $c$ with the generating subset $S_i$, which is independent of the group morphism $\varphi$.

From the computation after Definition \ref{def_normauto}, it follows that $\|\varphi\|_{S',S_i} \leq C_1 \pr{\|\varphi\|_S}^{k_1} \: \| \varphi \|_{S,S_i}.$ Take $C_3$ as any constant such that $\|s\|_S \leq C_3$ for all $s \in S_i$. (It is easy to give an explicit form for the constant $C_3$.) Hence, it follows that $\|\varphi\|_{S,S_i} \leq C_3 \: \|\varphi\|_{S}$, and consequently, $\|\varphi\|_{S',S_i}  \leq C_1 \: C_3 \: \pr{\|\varphi\|_S}^{k_1 + 1}$.

From the computation after Definition \ref{def_normsubgroup}, we have $\|N_{i+1}^\varphi\|_S \leq C_1 \: \pr{\|\varphi\|_S}^{k_1} \: \|N_{i+1}^\varphi\|_{S'}$. By using Proposition \ref{ker_morphism_inequality}, we get that
\begin{align*}
\|N_{i+1}^\varphi\|_S &\leq C_1 \: \pr{\|\varphi\|_S}^{k_1} \: \|N_{i+1}^\varphi\|_{S'} \\
&\leq C_1 \: C_2 \: \pr{\|\varphi\|_S}^{k_1} \pr{\|\varphi\|_{S',S_i}}^{r_i} \\
&\leq C_1^{r_i + 1} \: C_2 \: C_3^{r_i} \pr{\|\varphi\|_S}^{k_1 + k_1 \: r_i + r_i}.
\end{align*}
It follows that there exists a bound $m_{i+1}$ on the number of generators by Proposition \ref{ker_morphism_inequality}.
\end{proof}

An important application of Theorem \ref{twisted_pullback_norm_bound} is to bound the twisted determinant of an automorphism in terms of its norm.
\begin{cor}\label{twisted_determinant_bound}
	Let $N$ be a $\mathcal{F}$-group with generating subset $S$. There exists a constant $C > 0$ and $k \in \N$ such that for every automorphism $\map{\varphi}{N}{N}$ the twisted determinant $D_\varphi$ satisfies
	$$
 \vert 	D_\varphi \vert \leq C \pr{\|\varphi\|_S}^k.
	$$
\end{cor}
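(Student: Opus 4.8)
The plan is to reduce $\vert D_\varphi \vert$ to a product over $i$ of the quantities $D_{\varphi,i} = \det(\bar\psi_{\varphi,i})$ and bound each factor separately using the determinant estimates from the abelian case. Recall from Definition \ref{def_twisted_det} that $D_\varphi = \prod_{i=1}^c D_{\varphi,i}$, where each $\bar\psi_{\varphi,i}\colon \faktor{N_i^\varphi}{N_{i+1}^\varphi}\to\faktor{\ga_i(N)}{\ga_{i+1}(N)}$ is the injective morphism of finitely generated abelian groups induced by $\psi_{\varphi,i}$. Since $c = c(N)$ is fixed, it suffices to produce, for each $i$, a constant $C_i>0$ and an exponent $k_i\in\N$, both independent of $\varphi$, with $D_{\varphi,i}\le C_i(\|\varphi\|_S)^{k_i}$; then one takes $C = \prod_i C_i$ and $k = \sum_i k_i$.

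For a fixed $i$, I would first invoke Theorem \ref{twisted_pullback_norm_bound} to obtain a finite generating subset $S_i'$ of $N_i^\varphi$ with $\|s\|_S \le C(\|\varphi\|_S)^{k}$ for all $s\in S_i'$ and with $\vert S_i'\vert$ bounded by a constant $m_i$ independent of $\varphi$ (both the norm bound and the uniform bound on the number of generators are asserted there). Projecting these generators to $\faktor{N_i^\varphi}{N_{i+1}^\varphi}$ gives a generating subset of that quotient of size $\le m_i$, and applying $\psi_{\varphi,i}$ to them gives a generating subset $X$ of $\bar\psi_{\varphi,i}\!\left(\faktor{N_i^\varphi}{N_{i+1}^\varphi}\right)$ inside $\faktor{\ga_i(N)}{\ga_{i+1}(N)}$. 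Now fix once and for all the standard generating subset $S_i$ of $\faktor{\ga_i(N)}{\ga_{i+1}(N)}$ coming from commutators of length $i$ (as in the proof of Theorem \ref{twisted_pullback_norm_bound}); since $\psi_{\varphi,i}(x) = x\,\varphi(x)^{-1}\bmod\ga_{i+1}(N)$, each element of $X$ has $S_i$-norm at most $(1+\|\varphi\|_{S,S_i})\,\|s\|_{S}$-type bound, which after the change-of-generating-set estimates recorded after Definitions \ref{def_normsubgroup} and \ref{def_normauto} is $\le C'(\|\varphi\|_S)^{k'}$ for constants independent of $\varphi$. Hence $\|\bar\psi_{\varphi,i}(\faktor{N_i^\varphi}{N_{i+1}^\varphi})\|_{S_i}\le C'(\|\varphi\|_S)^{k'}$.

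With this norm bound on the image in hand, I would apply Corollary \ref{abelian_image_bound} (or directly Proposition \ref{det_bound_length_subgroup}) to the abelian group $\faktor{\ga_i(N)}{\ga_{i+1}(N)}$: it yields a constant depending only on that group (hence only on $N$) such that $D_{\varphi,i} = \det(\bar\psi_{\varphi,i}) \le C''\,(\|\bar\psi_{\varphi,i}(\faktor{N_i^\varphi}{N_{i+1}^\varphi})\|_{S_i})^{r_i}$, where $r_i = \operatorname{rank}_\Z(\faktor{\ga_i(N)}{\ga_{i+1}(N)})$ is a constant. Combining this with the previous paragraph gives $D_{\varphi,i}\le C_i(\|\varphi\|_S)^{k_i}$ with $k_i = r_i k'$, and multiplying over $i=1,\dots,c$ finishes the proof.

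\textbf{Main obstacle.} The only delicate point is the bookkeeping of the several changes of generating subset — passing from $S$ to $S_i'$ on $N_i^\varphi$, then from $S_i'$ to the natural basis $S_i$ of the abelian quotient, and keeping the exponents polynomial and the multiplicative constants independent of $\varphi$. This is exactly the kind of estimate already carried out inside the proof of Theorem \ref{twisted_pullback_norm_bound}, so no genuinely new idea is needed; one must simply ensure that the uniform bound $m_i$ on the number of generators (which Theorem \ref{twisted_pullback_norm_bound} supplies) is used so that $\|X\|_{S_i}$ controls $\det$ via the fixed-group constant from Proposition \ref{det_bound_length_subgroup}. The torsion subgroup $T\!\left(\faktor{\ga_i(N)}{\ga_{i+1}(N)}\right)$ contributes only a bounded factor and is harmless, as already noted in the proof of Proposition \ref{det_bound_length_subgroup}.
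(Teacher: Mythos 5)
Your proposal is correct and follows essentially the same route as the paper: reduce to bounding each factor $D_{\varphi,i}$, use Theorem \ref{twisted_pullback_norm_bound} to get generators of $N_i^\varphi$ of norm polynomial in $\|\varphi\|_S$, bound the norm of the image of $\psi_{\varphi,i}$, and conclude via Corollary \ref{abelian_image_bound}. Your extra bookkeeping with the generating set $S_i$ of $\faktor{\ga_i(N)}{\ga_{i+1}(N)}$ and the uniform bound on the number of generators only makes explicit what the paper's shorter proof leaves implicit.
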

\begin{proof}
	It suffices to give such a bound for every determinant $D_i$ of $\psi_i$. For this bound, we use Theorem \ref{twisted_pullback_norm_bound} on the group $N_i^\varphi$ to find a generating subset $S^\prime$ whose word length $\Vert S^\prime \Vert_S \leq C (\Vert \varphi \Vert_S)^k$ for some $C, k \in \N.$ In particular, the norm $\|\psi_i(N_i^\varphi)\|_S \leq C (\Vert \varphi \Vert_S)^{k+1}$. We next apply Corollary \ref{abelian_image_bound} to the group morphism $\psi_i$ to find a bound on the determinant $D_i$.
\end{proof}

As a final application of the above bounds, we have the following estimate which is essential for Theorem \ref{main_thm}.
\begin{cor}\label{lemma_2_blackburn_generalization_bound}
	Let $N$ be a $\mathcal{F}$-group with a finite generating subset $S$. Let $p$ be prime, and let $k^\ast(p,c)$ be the constant from Corollary \ref{twisted_central_pullback_matrix_reduction}. Then there exists some constant $C >0$ and an integer $k$ such that $p^{k^\ast(p,c) + v_p(D_\varphi)} \leq C \pr{\|\varphi\|_S}^{k}$ for every automorphism $\varphi: N \to N$.
\end{cor}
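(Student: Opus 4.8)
The plan is to combine the two estimates already established: the bound from Proposition~\ref{lemma_2_blackburn_generalization} that relates $p^{k^\ast(p,c) + v_p(D_\varphi)}$ to $|D_\varphi|$, and the bound from Corollary~\ref{twisted_determinant_bound} that relates $|D_\varphi|$ to $\pr{\|\varphi\|_S}^k$. The point is that $k^\ast(p,c)$ does not depend on the automorphism $\varphi$, so the only $\varphi$-dependence on the left-hand side comes through $v_p(D_\varphi)$, which is controlled by $|D_\varphi|$ itself.

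First I would invoke the last sentence of Proposition~\ref{lemma_2_blackburn_generalization}: there is a constant $C_1 > 0$ (independent of $p$ and $\varphi$) such that $p^{k^\ast(p,c) + v_p(D_\varphi)} \leq C_1 \, |D_\varphi|$ for every prime $p$ and every automorphism $\varphi$. Then I would apply Corollary~\ref{twisted_determinant_bound} to obtain a constant $C_2 > 0$ and an integer $k$ such that $|D_\varphi| \leq C_2 \pr{\|\varphi\|_S}^k$ for every $\varphi \in \Aut(N)$. Chaining these two inequalities gives
$$
p^{k^\ast(p,c) + v_p(D_\varphi)} \leq C_1 \, |D_\varphi| \leq C_1 C_2 \pr{\|\varphi\|_S}^k,
$$
so the claim holds with $C = C_1 C_2$ and the same exponent $k$.

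There is essentially no obstacle here; the corollary is purely a matter of bookkeeping, assembling two results proved earlier in exactly the form needed. The only mild subtlety worth flagging in the writeup is that one must make sure the constant $C$ is genuinely uniform in $p$ — this is exactly why Proposition~\ref{lemma_2_blackburn_generalization} was stated with the explicit uniform-in-$p$ bound $p^{k^\ast(p,c) + v_p(D_\varphi)} \leq C|D_\varphi|$ rather than just asserting finiteness for each $p$ — and that Corollary~\ref{twisted_determinant_bound} is already uniform in $\varphi$. So I would simply cite both results and write out the one-line chain of inequalities above.
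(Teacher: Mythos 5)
Your proposal is correct and is exactly the paper's argument: the paper's proof of this corollary simply chains the uniform-in-$p$ bound $p^{k^\ast(p,c)+v_p(D_\varphi)} \leq C\,|D_\varphi|$ from Proposition \ref{lemma_2_blackburn_generalization} with the estimate $|D_\varphi| \leq C\,(\|\varphi\|_S)^k$ from Corollary \ref{twisted_determinant_bound}, just as you do. Nothing is missing.
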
 
\begin{proof}
This follows immediately from the bounds in Proposition \ref{lemma_2_blackburn_generalization} and Corollary \ref{twisted_determinant_bound}.
\end{proof}

\section{Precise Conjugacy Separability of Two Step Nilpotent Groups}

Before we give a proof of the main theorem, we apply the techniques that we have developed so far in a specific setting, namely conjugacy separability of nilpotent groups of nilpotency class $2$. The main goal of this section is to give a precise computation of the asymptotic behavior of $\Conj_{N,S}(n)$ where $N$ is a $\mathcal{F}$-group (see Theorem \ref{two_step_conjugacy_precise}). We start with some preliminary results and observations. 

Let $N$ be a $\mathcal{F}$-group of nilpotency class $2$ with $x \in N$. From Definition \ref{deftwistedcentralizer}, we have  $$N_{2}^{\Inn(x)} =  \{y \in N | \: y \: x \: y^{-1} \: x^{-1} \in \ga_2(N) \} = N$$ since $N$ is a  nilpotent group of nilpotency class $2$. We also observe that the map $$\psi_{\Inn(x),2}: N \to \ga_2(N)$$ is given by $\psi_{\Inn(x),2}(y) = [y,x]$. Note that for every $x, y, z \in N$, it holds that $$[z,x y] = z x y z^{-1} y^{-1} x^{-1} = z x z^{-1} x^{-1} x z y z^{-1} y^{-1} x^{-1} = [z,x] [z,y].$$ This calculation can also be observed from the fact that $N_3^{\Inn(x)} = 1$ for all $x \in N$.

In the case of inner automorphisms, we have a stronger version of Theorem \ref{twisted_pullback_norm_bound}.
\begin{prop}\label{important_morphism_bound}
	Let $N$ be a $\mathcal{F}$-group with a finite generating subset $S$ such that $c(N) = 2$. Then there exists a $C > O$ such that $\|\im(\psi_{\Inn(x),2})\|_{S} \leq C \: \sqrt{\Vert x \Vert_S}$ for every $x \in N$.
\end{prop}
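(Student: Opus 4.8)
The plan is to reduce the question to a statement about the image of a linear map on $\mathbb Z^k$, exactly as in the proof of Theorem \ref{twisted_pullback_norm_bound}, but now tracking the dependence on $\Vert x\Vert_S$ rather than on $\Vert\varphi\Vert_S$. Write $\psi = \psi_{\Inn(x),2}\colon N\to\gamma_2(N)$; since $c(N)=2$ this is a group homomorphism with $\psi(y)=[y,x]$, and $\gamma_2(N)\leq Z(N)$ is free abelian of some rank $r$ (modulo torsion, which is trivial as $N$ is an $\mathcal F$-group). The image $\im(\psi)$ is the subgroup of $\gamma_2(N)$ generated by the finitely many elements $[s_i,x]$, $s_i\in S$. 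So it suffices to bound $\Vert[s_i,x]\Vert_S$ for a fixed generating set $S=\{s_1,\dots,s_k\}$ by $C\sqrt{\Vert x\Vert_S}$, since $\Vert\im(\psi)\Vert_S\leq\max_i\Vert[s_i,x]\Vert_S$ (the image is generated by these $k$ elements, with $k$ fixed).

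First I would fix a Mal'cev basis (or just a linearly independent generating set) for $\gamma_2(N)$ and express $[s,x]$ in coordinates for each $s\in S$. The bilinearity computation in the excerpt, $[z,xy]=[z,x][z,y]$, together with the analogous $[zw,x]=[z,x][w,x]$ valid in a class-$2$ group, shows that the map $x\mapsto[s,x]\bmod\gamma_2(N)$ factors through $N_{\mathrm{ab}}=N/\gamma_2(N)$ and is a homomorphism $N_{\mathrm{ab}}\to\gamma_2(N)$. Hence each coordinate of $[s,x]$ is a $\mathbb Z$-linear function of the coordinates of $\bar x\in N_{\mathrm{ab}}\cong\mathbb Z^d$. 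The point is then: if $\Vert x\Vert_S\leq n$, the abelianized coordinates of $x$ are $O(n)$, so the coordinates of $[s,x]$ in $\gamma_2(N)$ are $O(n)$ as well — this only gives $\Vert\im(\psi)\Vert_S\preceq\Vert x\Vert_S$, not the square root. The extra savings comes from the following observation: an element of $\gamma_2(N)$ whose Mal'cev coordinates are bounded by $M$ has $S$-word length $O(\sqrt M)$, because $\gamma_2(N)$ is the commutator subgroup and commutators of "size" $\ell$ (products of $\ell$ generators) produce central elements that are quadratically large — equivalently, writing a central element with coordinate $M$ requires roughly $\sqrt M$ generators in a word, by the standard area/volume distortion in nilpotent groups of class $2$. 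Concretely, $[s_i^a, s_j^b] = [s_i,s_j]^{ab}$ has word length $O(a+b)=O(\sqrt{ab})$ in the free class-$2$ nilpotent group, and this is the mechanism I would use.

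So the key steps, in order, are: (1) reduce to bounding $\Vert[s_i,x]\Vert_S$ for $s_i$ in a fixed finite generating set; (2) show $[s_i,x]$, as an element of the free abelian group $\gamma_2(N)$, has coordinates that are $O(\Vert x\Vert_S)$, using that $x\mapsto[s_i,x]$ is a homomorphism out of $N_{\mathrm{ab}}$ into $\gamma_2(N)$ combined with $\Vert\bar x\Vert\leq\Vert x\Vert_S$; (3) invoke the quadratic distortion of $\gamma_2(N)$ in $N$ — i.e. a central element of $\gamma_2(N)$ with coordinates of size $M$ has word length $O(\sqrt M)$ — to convert the $O(\Vert x\Vert_S)$ coordinate bound into an $O(\sqrt{\Vert x\Vert_S})$ word-length bound; (4) take the max over the finitely many $s_i$ and over a generating set of $\gamma_2(N)$ of bounded size (using Proposition \ref{rootcommutator} to control $\Vert\sqrt[N]{\gamma_2(N)}\Vert_S$, hence $\Vert\gamma_2(N)\Vert_S$, by an absolute constant), which only affects the constant $C$. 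The main obstacle is step (3): making precise and citing (or proving in a line) the statement that commutators are quadratically distorted, i.e. that an explicit word realizing a central element of prescribed coordinate vector can be chosen of length $O(\sqrt{\text{coordinate size}})$. This is classical for free nilpotent groups of class $2$ (the Heisenberg-type estimate), and passing to a quotient $N$ only shortens words, so the bound survives; I would phrase it via the identity $[u,v]^m = [u^m,v]$ modulo higher terms (here there are no higher terms) to see that achieving commutator-coordinate $ab$ costs only $a+b\asymp\sqrt{ab}$ letters, then assemble the general central element as a bounded product of such pure commutator powers.
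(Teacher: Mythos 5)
Your proposal is correct and follows essentially the same route as the paper: reduce to the generators $[s,x]$, $s\in S$, of $\im(\psi_{\Inn(x),2})$, note their length in $\gamma_2(N)$ (with the commutator generating set) is linear in $\Vert x\Vert_S$ by bilinearity in class $2$, and then apply the quadratic distortion of $\gamma_2(N)$ in $N$ to get the square root. The only difference is cosmetic: the paper cites Osin for the distortion estimate, whereas you sketch the standard direct argument via $[u^a,v^b]=[u,v]^{ab}$, which is fine.
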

\begin{proof}
	Write $S^\prime = \{[s_i,s_j] \: | \: s_i,s_j \in S\}$ as a finite generating subset for $\ga_2(N)$. From \cite{Osin}, it follows that there exists a constant $C > 0$ such that $$\Vert x \Vert_{S} \leq C \sqrt{ \Vert x \Vert_{S^\prime}}$$ for all $x \in \gamma_2(N)$. 
The subgroup $\im(\psi_{\Inn(x),2})$ is generated by elements of the form $[s,x]$ with $s \in S$. It follows immediately that $\Vert [s,x] \Vert_{S^\prime} \leq \Vert x \Vert_S$, and hence, $\Vert \im(\psi_{\Inn(x),2}) \Vert_{S} \leq C \sqrt{\Vert x \Vert_S}$.
	\end{proof} 
	
Another crucial ingredient for the main results is separability of central subgroups. We start with an easy example which we will use in the proof of the next proposition.

\begin{ex}
	\label{cyclicsubgroupseperability}
Take the group $\Z$ with standard generating subset $S = \{\pm 1\}$. Consider any non-trivial subgroup $H = \langle h \rangle \neq 0$. We can seperate any element $x \in \Z, x \notin H$ in the finite quotient $\faktor{\Z}{H}$. The order of this finite quotient is $\vert h \vert = \Vert H \Vert_S$. Note that $\faktor{\Z}{H}$ is a direct sum of subgroups with order a prime power. In particular we can separate every element $x \notin H$ from $H$ in a quotient of the form $\faktor{\Z}{p^k \Z}$ with $p^k \leq \Vert H \Vert_S$. 

For a general generating subset $S^\prime$ for $\Z$, we find that there exists a $C > 0$ such that every element can be separated in a finite quotient $\faktor{\Z}{p^k \Z}$ with $p^k \leq C \Vert H \Vert_{S^\prime}$. 
\end{ex}

The following result generalizes this example to nilpotent groups.
	
\begin{prop}
	\label{centralsubgroup}
	Let $N$ be a $\mathcal{F}$-group of nilpotency class $c$ with finite generating subset $S$. There exists a constant $C$ such that for all central subgroups $H \le Z(N)$ and every $x \in N, x \notin H$, we can separate $x$ and $H$ in a finite quotient $Q$ with $$\vert Q \vert \leq C \max\{ \Vert H \Vert_S, \log \left( \Vert x \Vert_S \right) \}^{c \hspace{1mm} \Phi(N)}.$$ 
\end{prop}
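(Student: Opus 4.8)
The plan is to reduce the separation of $x$ from a central subgroup $H \le Z(N)$ to a one-dimensional central quotient, where the situation becomes essentially the arithmetic of $\Z$ as in Example \ref{cyclicsubgroupseperability}. First I would reduce to the case that $H$ is isolated, i.e.\ $H = \sqrt[N]{H}$: replacing $H$ by its isolator increases the norm only by a bounded multiplicative constant (since $|\sqrt[N]{H}:H|<\infty$ and, more carefully, the isolator of a subgroup of bounded norm has bounded norm by the kind of argument in Proposition \ref{ker_morphism_inequality} / Proposition \ref{rootcommutator}), and if $x\notin\sqrt[N]{H}$ we separate $x$ from the larger isolated subgroup, while if $x\in\sqrt[N]{H}\setminus H$ then the nonzero order of $\bar x$ in the finite group $\sqrt[N]{H}/H$ is bounded by $|\sqrt[N]{H}:H| \le \det(H) \le C(\|H\|_S)^{\rank H}$, so a quotient of size polynomial in $\|H\|_S$ separates them. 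So from now on $H$ is isolated and $N/H$ is again an $\mathcal F$-group, and it suffices to separate the nontrivial element $\bar x = \pi_H(x)$ from the identity in $N/H$, with a control on $\|\bar x\|_S$ in terms of $\|x\|_S$ (it is bounded by $\|x\|_S$, which is all we need, though the statement's $\log(\|x\|_S)$ suggests we will only pay logarithmically in the end).

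Next, pick a term of the lower central series (or a central subgroup) detecting $\bar x$: there is a smallest $j$ with $\bar x \notin \gamma_{j+1}(N/H)$, and then $\bar x$ has nontrivial image in the torsion-free abelian group $A = \gamma_j(N/H)/\gamma_{j+1}(N/H)$ after isolating, or more simply one chooses a primitive central element $z$ of $N/H$ lying "below" $\bar x$ in an appropriate sense. The cleaner route, matching the appearance of $\Phi(N)$ and $c$ in the bound, is: find a primitive central element $z \in Z(N/H)$ and an index $i \le c$ such that $\bar x^{m} \equiv z \pmod{\gamma_{i+1}}$ fails to be trivial — in other words reduce to separating a nontrivial element in a one-dimensional central quotient of $N/H$ associated to some primitive central $z$. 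By definition of $\Phi(N)$ (and since $\Phi$ is monotone under quotients, or at least $\Phi(N/H) \le \Phi(N)$, which should be checked or cited from \cite{Pengitore_1}) there is such a quotient $Q_0 = (N/H)/K$ which is an $\mathcal F$-group with $Z(Q_0) = \langle \pi_K(z)\rangle \cong \Z$ and $h(Q_0) \le \Phi(N)$.

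Now in $Q_0$ the image of $\bar x$ is nontrivial, and we separate it from $1$ using the standard effective residual finiteness of $\mathcal F$-groups: an element of word length $\le \|x\|_S$ that is nontrivial in a group of Hirsch length $\le \Phi(N)$ and nilpotency class $\le c$ survives in a finite $p$-quotient of order $\le (C\log\|x\|_S)^{h} \le (C\log\|x\|_S)^{\Phi(N)}$ for a suitable prime power — this is exactly the Bou-Rabee / Mal'cev logarithmic bound for $\mathrm{F}_{N}$, and here one iterates over the at most $c$ steps of the lower central series of $Q_0$, losing a factor $c$ in the exponent and giving the claimed $(\cdot)^{c\,\Phi(N)}$. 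The $\max\{\|H\|_S, \log\|x\|_S\}$ comes from combining this with the first paragraph's polynomial-in-$\|H\|_S$ contribution (one takes a single quotient dominating both, or multiplies the two bounded quotients, which only changes the constant $C$). The main obstacle I expect is the bookkeeping in the reduction step: verifying that passing to the isolator of $H$ and then to a one-dimensional central quotient keeps all the relevant norms, Hirsch lengths, and the invariant $\Phi$ under control by a single constant $C$ independent of $H$ and $x$ — in particular that $\Phi(N/H) \le \Phi(N)$ for \emph{every} isolated central $H$, and that the word length of $\pi_H(x)$ or of the chosen primitive $z$ does not blow up. Everything after that reduction is a routine assembly of Example \ref{cyclicsubgroupseperability}, Proposition \ref{det_bound_length_subgroup}, and the classical logarithmic residual finiteness bound for finitely generated torsion-free nilpotent groups.
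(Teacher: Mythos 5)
Your case split (according to whether $x$ lies in $\sqrt[N]{H}$) and the appeal to one-dimensional central quotients are in the spirit of the paper's proof, but the case $x \in \sqrt[N]{H}\setminus H$ --- which is exactly the case responsible for the $\Vert H \Vert_S^{\,c\,\Phi(N)}$ term --- is not actually proved in your sketch. Bounding the order of $\bar x$ in the finite group $\sqrt[N]{H}/H$ by $\det(H)$ does not yield ``a quotient of size polynomial in $\Vert H\Vert_S$ separating them'': the group $\sqrt[N]{H}/H$ is a quotient of the subgroup $\sqrt[N]{H}$, not of $N$, and the entire content of Proposition \ref{centralsubgroup} is to exhibit a finite quotient of $N$ itself, of controlled order, in which the image of $x$ avoids the image of $H$. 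The paper does this with three ingredients absent from your outline: (i) pass to a one-dimensional central quotient $M$ of $N$ with $h(M)\le \Phi(N)$, so that $x$ and $H$ are to be separated inside the infinite cyclic group $Z(M)$, as in Example \ref{cyclicsubgroupseperability}, in $Z(M)/p^k Z(M)$ with $p^k \preceq \Vert H\Vert_{S'}$ for $S'$ a generator of $Z(M)$; (ii) convert $\Vert H\Vert_{S'}$ into $\Vert H\Vert_S^{\,c}$ via the distortion of central subgroups \cite{Osin} --- this, and not an ``iteration over the $c$ steps of the lower central series'' as you suggest, is the source of the factor $c$ in the exponent; and (iii) invoke Lemma \ref{blackburn_lemma} to get $M^{p^{k+k(p,c)}}\cap Z(M)\le p^k Z(M)$, so that the congruence quotient $M/M^{p^{k+k(p,c)}}$, of order at most $p^{(k+k(p,c))h(M)}\le C\,\Vert H\Vert_S^{\,c\,\Phi(N)}$, still separates $x$ from $H$. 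Without these steps your argument produces no finite quotient of $N$ at all in this case, and no bound of the stated shape.

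In the complementary case $x\notin \sqrt[N]{H}$ your plan essentially coincides with the paper's: pass to the $\mathcal{F}$-group $N/\sqrt[N]{H}$ and separate $\bar x\neq 1$ from the identity, which the paper does by directly citing the effective residual finiteness bound of \cite[Thm 1.1]{Pengitore_1} to obtain a quotient of order $\preceq \log(\Vert x\Vert_S)^{c\,\Phi(N)}$; your additional detour through a one-dimensional central quotient of $N/H$ and the unverified monotonicity $\Phi(N/H)\le\Phi(N)$ is not needed for this half (uniformity of the constant over all $H$ is a point both write-ups must address). Note also that the $\max\{\Vert H\Vert_S,\log \Vert x\Vert_S\}$ in the statement simply records which of the two cases occurred; one does not combine or multiply two quotients as in your final paragraph.
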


\begin{proof}
	Assume that $H$ is a central subgroup and $x \in N \setminus H$. We construct a finite quotient which separates $x$ from $H$ in two different cases, according to whether or not $x \in H_0 = \sqrt[N]{H}$.
	
First assume that $x \in H_0$. By taking a quotient $M$ of $N$ if necessary, we can assume that $Z(N)$ has rank $1$ and that this quotient satisfies $h(M) \leq \Phi(N)$. Let $z$ be a generator of $Z(M)$. From Example \ref{cyclicsubgroupseperability}, it follows that there exists a prime power $p^k$ such that $x$ and $H$ are separated in $\faktor{Z(M)}{p^k Z(M)}$ with $p^k \leq C \Vert H \Vert_{S^\prime}$ for $S^\prime$ a generating subset for $Z(M)$. There exists a constant $C^\prime$ such that $\Vert H \Vert_{S^\prime}\leq C^\prime \Vert H \Vert_S^{c}$ by \cite{Osin}. Now take $k(p,c)$ as in Lemma \ref{blackburn_lemma}, and consider the quotient $\faktor{M}{M^{p^{k+k(p,c)}}}$. Note that $M^{p^{k+k(p,c)}} \cap Z(M) \le p^k \cdot Z(M)$ by Lemma \ref{blackburn_lemma}, and thus, $x$ and $H$ are seperated in this finite quotient. The order of this quotient is bounded by $ p^{h(M) k(p,c)} p^{h(M)k} \leq \left(c!\right)^{\Phi(N)} \hspace{1mm} C \Vert H \Vert^{c \hspace{0.5mm} \Phi(N)}$. 

Now assume that $x \notin H_0$. In this case, consider the quotient group $\faktor{N}{H_0}$ which is a torsion-free nilpotent group with $\pi_{H_0} (x) \neq 1$. From \cite[Thm 1.1]{Pengitore_1}, where $\Psi_{\text{RF}}(N) = \Phi(N)$, it follows there exists a constant $C$ such that $x$ is separated from $H$ in a quotient of order $\leq C \log \left( \Vert x \Vert_S \right)^{c \hspace{0.5mm} \Phi(N)}$. Since we have a bound in both cases, the proof is finished.\end{proof}

Now that we have all of the necessary tools, we can give an upper bound for the conjugacy separability of $\mathcal{F}$-groups of nilpotency class $2$. The upper bound matches the lower bound given in \cite{Pengitore_1}, but as mentioned before there is a gap in the proof of this lower bound and hence the asymptotic behavior is not yet fully understood.
\begin{thm}\label{two_step_conjugacy_precise}
	Let $N$ be a $\mathcal{F}$-group with a finite generating subset $S$ such that $c(N) = 2$. Then $\Conj_{N,S}(n) \preceq n^{\Phi(N)}$.
\end{thm}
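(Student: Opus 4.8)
The strategy is to reduce conjugacy separation for a pair $x, y \in N$ with $x \not\sim y$ and $\|x\|_S, \|y\|_S \leq n$ to two sub-problems, exactly as in the residual finiteness argument of \cite{Pengitore_1}, but now using the twisted-centralizer machinery of Section \ref{sec:twisted} with $\varphi = \Inn(x)$. Write $\bar x = \pi_2(x)$, $\bar y = \pi_2(y)$ in $N_{\ab} = N/\ga_2(N)$. If $\bar x \neq \bar y$, then $x$ and $y$ already have distinct images in the abelian quotient, and since $N_{\ab}$ is finitely generated abelian, a quotient of size $\preceq \log(n)$ — in fact polynomial of much smaller degree than $\Phi(N)$ — separates them by effective residual finiteness of $\Z^k$; this case is easy and contributes a negligible term. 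So the substantive case is $\bar x = \bar y$, i.e. $y = w x$ for some $w \in \ga_2(N)$, and $wx \not\sim x$.

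In that case Lemma \ref{nearly_twisted_conjugate_lemma} (applied with $\varphi = \id$, so $\varphi_x = \Inn(x)$, and $i = 2$, noting $N_2^{\Inn(x)} = N$ since $c(N)=2$) says $wx \sim x$ iff $w \in X_2^{\Inn(x)} = \im(\psi_{\Inn(x),2})$, and by hypothesis this fails. Thus $w$ lies in the central subgroup $\ga_2(N)$ but \emph{not} in the central subgroup $H := \im(\psi_{\Inn(x),2}) \leq Z(N)$. I would now apply Proposition \ref{centralsubgroup} to separate $w$ from $H$: there is a finite quotient $Q$ of $N$ of order $\preceq \max\{\|H\|_S, \log(\|w\|_S)\}^{c\,\Phi(N)}$ with $\pi(w) \notin \pi(H)$. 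The remaining task is to upgrade "$\pi(w) \notin \pi(H)$" to "$\pi(y) \notin \pi([x])$", i.e. to check that this same quotient $Q$ actually separates the conjugacy class of $x$ from $y$. For this one checks that $\pi(H) = \pi(\im(\psi_{\Inn(x),2}))$ contains (indeed controls) the $\ga_2$-part of $\pi([x])$: if $\pi(y) = \pi(zx\varphi_x\text{-conjugate})$... concretely, $\pi(z) \pi(x) \pi(z)^{-1} = \pi(y)$ forces $\pi(z x z^{-1} x^{-1}) = \pi(wx x^{-1}) = \pi(w)$, and $zxz^{-1}x^{-1} = [z,x] \in \im(\psi_{\Inn(x),2}) = H$ modulo $\ga_3(N) = 1$, so $\pi(w) \in \pi(H)$, a contradiction. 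Hence $Q$ works.

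Finally I would assemble the bound. Proposition \ref{important_morphism_bound} gives $\|H\|_S = \|\im(\psi_{\Inn(x),2})\|_S \preceq \sqrt{\|x\|_S} \preceq \sqrt n$, and $\|w\|_S \preceq \|x\|_S + \|y\|_S \preceq n$ so $\log(\|w\|_S) \preceq \log n$. Plugging into Proposition \ref{centralsubgroup}, the separating quotient has order $\preceq \max\{\sqrt n, \log n\}^{c\,\Phi(N)} \preceq n^{(c/2)\,\Phi(N)}$ — with $c = 2$ this is $n^{\Phi(N)}$, matching the claim. (One should double-check the exponent bookkeeping: Proposition \ref{centralsubgroup} is stated with exponent $c\,\Phi(N)$ applied to a quantity that is itself $\sqrt n$, which with $c=2$ gives exactly $\Phi(N)$; the $\log n$ branch is dominated.) Combining with the easy $\bar x \neq \bar y$ case, $\Conj_{N,S}(n) \preceq n^{\Phi(N)}$.

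\textbf{Main obstacle.} The delicate point is the exact exponent: one must verify that the $\sqrt{\|x\|_S}$ savings from Proposition \ref{important_morphism_bound} (which uses the quadratic distortion of $\ga_2(N)$ in a class-$2$ group, via \cite{Osin}) lines up precisely with the $c\,\Phi(N) = 2\,\Phi(N)$ exponent in Proposition \ref{centralsubgroup} to yield $\Phi(N)$ and not something larger. Getting a clean upper bound is routine; getting the \emph{sharp} exponent $\Phi(N)$ requires that every estimate along the way — the distortion, the norm of $H$, the size of the one-dimensional central quotient, and the congruence-subgroup argument inside Proposition \ref{centralsubgroup} — be tight simultaneously, and that no case other than $\bar x = \bar y$ contributes a comparable term.
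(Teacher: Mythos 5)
Your proposal is correct and follows essentially the same route as the paper's proof: the same case split on the abelianization (logarithmic quotient via effective residual finiteness when $\pi_{\ab}(x)\neq\pi_{\ab}(y)$), and in the central-difference case the same use of Proposition \ref{important_morphism_bound} to bound $\|\im(\psi_{\Inn(x),2})\|_S \preceq \sqrt{n}$, Proposition \ref{centralsubgroup} with $c=2$ to separate $z$ from that central subgroup in a quotient of order $\preceq n^{\Phi(N)}$, and the identical contradiction argument ($\pi(axa^{-1})=\pi(xz)$ would force $\pi(z)\in\pi(\im(\psi_{\Inn(x),2}))$) to upgrade subgroup separation to conjugacy separation. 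The exponent bookkeeping you flag as the main obstacle is exactly how the paper obtains $n^{\Phi(N)}$.
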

\begin{proof}
Suppose $x, y \in N$ such that $\|x\|_S,\|y \|_S \leq n$ and $x \nsim_{\text{id}} y$. If $\pi_{\ab}(x) \nsim_{\text{id}} \pi_{\ab}(y)$, or equivalently, if $\pi_{\ab}(x) \neq \pi_{\ab}(y)$, then \cite[Thm 1.1.]{Pengitore_1} implies that there exists a surjective group morphism $\map{\pi}{N / [N,N]}{Q}$ such that $\pi(\pi_{\ab}(x \: y^{-1})) \neq 1$ and where $|Q| \leq C \: \log( \|x \: y^{-1}\|)$. Since $\pi(\pi_{\ab}(x))$ and $\pi(\pi_{\ab}(y))$ are non-equal central elements, they are not conjugate. Thus, $D_{N}([x]_{\id},y) \leq 2 \: C \log(2 n).$
	
	Now we may assume that $y = x \: z$ where $\|z\|_S \leq n$. Consider the group morphism $\psi_{\Inn(x),2}$ as before. Proposition \ref{important_morphism_bound} implies $\|\im(\psi_{\Inn(x),2})\|_S \leq C_1 \: \sqrt{n}$ for some $C_1 \in \N$. Proposition \ref{centralsubgroup} implies there exists a surjective group morphism $\pi: N \to Q$ such that $\pi(z) \notin \pi(\im(\psi_{\Inn(x),2}))$ and where $|Q| \leq C_2 \: (\Vert \im(\psi_{\Inn(x),2})\Vert_S)  ^{2 \: \Phi(N)} \leq C_1^{2\Phi(N)} C_2 \: n^{\Phi(N) }$. We claim that $\pi(y) \nsim_{\text{id}} \pi(x \: z)$. Suppose for a contradiction there exists an $a \in N$ such that $\pi(a \: x \: a^{-1}) = \pi(x \: z)$. That implies $\pi(a \: x \: a^{-1} \: x^{-1}) = \pi(z)$. Thus, $\pi(z) \in \pi(\im(\psi_{\Inn(x),2}))$ which is a contradiction. Hence, $D_N([x]_{\id},x \: z) \leq  C_1^{2\Phi(N)} C_2 \: n^{\Phi(N)}$, and subsequently, $\Conj_{N,S}(n) \preceq n^{\Phi(N)}$.
\end{proof}

\section{Effective Twisted Conjugacy Separability}

In this section we prove the main results of this paper. We start with the abelian case.

\begin{prop}\label{twisted_abelian_separability}
	Let $A$ be a finitely generated abelian group with a finite generating subset $S$. For every $x \in A$, it holds that  $$\Conj_{A,x,S}^{\varphi}(n) \preceq \|\varphi\|_S \: \log \left(\Vert x \Vert_S + n \right)$$ for all $\varphi \in \Aut(A)$. Subsequently, $\Conj_{A,S}^{\varphi}(n) \preceq \|\varphi\|_S \: \log(n)$ and $\Tconj_{A,S}(n) \preceq n \log(n)$.
\end{prop}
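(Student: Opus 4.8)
The plan is to reduce the twisted conjugacy problem in the abelian group $A$ to a linear-algebra question about the endomorphism $\id - \varphi$, and then invoke effective residual finiteness of finitely generated abelian groups. Observe first that in an abelian group, $z \: x \: \varphi(z)^{-1} = z \: \varphi(z)^{-1} \: x$, so the $\varphi$-twisted conjugacy class of $x$ is exactly the coset $x \cdot \im(\id - \varphi)$, where $\id - \varphi$ is the endomorphism $z \mapsto z \: \varphi(z)^{-1}$ written additively. Hence for $x, y \in A$ we have $y \notin [x]_\varphi$ if and only if $y \: x^{-1} \notin \im(\id - \varphi)$, and separating the twisted conjugacy class of $x$ from $y$ in a finite quotient is the same as separating the subgroup $\im(\id - \varphi)$ from the element $y \: x^{-1}$.

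The key steps, in order: (1) Identify $[x]_\varphi = x \cdot \im(\id-\varphi)$ as above. (2) Bound the norm of the subgroup $H := \im(\id - \varphi)$: since $H$ is generated by the elements $s \: \varphi(s)^{-1}$ for $s$ ranging over the generating set $S$, each of word length at most $1 + \|\varphi\|_S$, we get $\|H\|_S \leq 1 + \|\varphi\|_S \preceq \|\varphi\|_S$, and the number of generators is $|S|$, bounded independently of $\varphi$. (3) Apply effective separability of subgroups in finitely generated abelian groups: a finite-index subgroup of $H$ is cut out by congruence conditions, and one separates an element $w = y\:x^{-1}$ of word length $\le \|x\|_S + n$ from $H$ in a quotient of the form $A / (H + p^k A)$ (or, after killing torsion and the free part spanned by $H$, a cyclic quotient of order a prime power), where the prime power needed is polynomial in $\|H\|_S$ and logarithmic in $\|w\|_S$. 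This is exactly the content of Example \ref{cyclicsubgroupseperability} together with the reduction used in the proof of Proposition \ref{centralsubgroup} (applied with $c = 1$, so that $\Phi$-type exponents collapse): one gets a finite quotient of order $\preceq \max\{\|H\|_S,\ \log(\|w\|_S)\}$, which by step (2) and $\|w\|_S \le \|x\|_S + n$ is $\preceq \|\varphi\|_S \cdot \log(\|x\|_S + n)$. (4) Collect the estimates: $\Conj_{A,x,S}^\varphi(n) \preceq \|\varphi\|_S \log(\|x\|_S + n)$; setting $\|x\|_S \le n$ gives $\Conj_{A,S}^\varphi(n) \preceq \|\varphi\|_S \log(n)$; and taking the max over $\|\varphi\|_S \le n$ gives $\Tconj_{A,S}(n) \preceq n \log(n)$.

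I expect the only genuinely delicate point to be step (3): making sure that the prime power $p^k$ used to separate $w$ from $H$ is controlled \emph{linearly} by $\|H\|_S$ (not by $\det$ or index, which can be much larger) and only \emph{logarithmically} by $\|w\|_S$. The first is handled by choosing, among generators of $H$, ones of norm $\le \|H\|_S$ and noting that the relevant modulus divides a coordinate of one such generator; the second is the standard effective-residual-finiteness estimate for $\Z$ (a nonzero integer of absolute value $\le M$ survives in $\Z/p^k\Z$ for some $p^k \preceq \log M$). The reduction to the torsion-free rank-one situation — quotienting out $T(A)$ and the isolator $\sqrt[A]{H}$ — is routine and already appears in the proof of Proposition \ref{centralsubgroup}, so I would cite that proof rather than redo it, simply specializing $c = 1$.
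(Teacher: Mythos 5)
Your proposal is correct and follows essentially the same route as the paper: identify $[x]_\varphi$ as the coset $x \cdot \im(\id - \varphi)$, bound $\|\im(\id-\varphi)\|_S$ by roughly $\|\varphi\|_S$ using the generators $s\,\varphi(s)^{-1}$, $s \in S$, and then invoke Proposition \ref{centralsubgroup} (every subgroup of $A$ being central, with $c=1$ and $\Phi(A)=1$) to separate $y-x$ from that subgroup in a quotient of order $\preceq \max\{\|\varphi\|_S, \log(\|x\|_S+n)\}$. The only cosmetic difference is that you spell out the rank-one reduction inside step (3), which the paper simply delegates to the already-proved Proposition \ref{centralsubgroup}.
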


\begin{proof}
	Let $\varphi \in \Aut(A)$, and take $x \in A$. Suppose $y \in A$ satisfies $\|y\|_S \leq n$ and $x \nsim_\varphi y$. We may write
	$$
	[x]_\varphi = \{z + x - \varphi(z) \: | \: z \in A \} = \{ x + \pr{ I_A - \varphi}(z) \: | \: z \in A \}
	$$
	where $I_A: A \to A$ is the identity map. 
	It thus holds that $y \notin [x]_\varphi$ if and only if $y-x \notin \im(I_A - \varphi)$. Observe that $\im(\psi_{\varphi,1}) = \im(I_A - \varphi)$ if we use the notations from Section \ref{sec:twisted}. For $s \in S$, we observe that $\{(I_A - \varphi)(s) \: | \: s \in S \}$ is a generating subset for $\im(I_A - \varphi)$. Thus, for each $s \in S$ we may write 
	$$
	\|(I_A-\varphi)(s)\|_S \leq \|s\|_S + \|\varphi(s)\|_S \leq 1 + \|\varphi\|_S.
	$$
	Subsequently, $\|\im(I_A-  \varphi)\|_S \leq 2 \|\varphi\|_S$. 
	Proposition \ref{centralsubgroup} implies that there exists a surjective group morphism $\map{\pi}{A}{Q}$ such that $\pi(y-x) \notin \pi(\im(I - \varphi))$ and where $$|Q| \leq C \: \max\left\{ \log \left(\|y - x\|_S \right), \|\varphi\|_S\right\} \leq  C \: \max\left\{ \log \left(\|x\|_S + n \right), \|\varphi\|_S\right\}$$ for some $C \in \N$. By construction, it holds that $\pi(y) \notin \pi([x]_\varphi)$. The statements of the theorem then follow immediately. \end{proof}

We still need the following technical result, which is a generalization of Lemma \ref{abelian_image_bound}. 

\begin{lemma}
	\label{norm}
	Let $N$ be a $\mathcal{F}$-group with finite generating subset $S$ and an automorphism $\varphi: N \to N$. There exists a constant $C > 0$ and an integer $k > 0$ such that for every $ y \in N$ with $y \in X_1^\varphi$, it holds that $y =  x \: \varphi(x)^{-1}$ for some $x \in N$ with $$\Vert x \Vert \leq C \max \{ \Vert \varphi \Vert_S, \Vert y \Vert_S \}^k.$$ 
\end{lemma}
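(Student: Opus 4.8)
The plan is to induct on the nilpotency class $c$ of $N$, reducing modulo $\gamma_c(N)$ and then correcting the error term, which will land in the center. First I would record the base case: when $c=1$, the group $N$ is abelian, $X_1^\varphi = \im(I_N-\varphi)$, and the bound follows from Lemma~\ref{unique_preimage_bound} applied to the injective morphism $I_N-\varphi$ restricted to a complement of its kernel. (One must pass to a finite-index free abelian subgroup on which $I_N-\varphi$ is injective; the norm of such a subgroup and the index are controlled by $\|\varphi\|_S$ via Lemma~\ref{ker_morphism_inequalityab} and Proposition~\ref{det_bound_length_subgroup}, and the determinant of $I_N - \varphi$ is polynomially bounded in $\|\varphi\|_S$ by Corollary~\ref{abelian_image_bound}.)

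For the inductive step, suppose the result holds for $\mathcal F$-groups of class $< c$. Given $y \in X_1^\varphi$, write $y = x_0\,\varphi(x_0)^{-1}$ for some $x_0 \in N$; our job is to replace $x_0$ by an element of controlled norm. Pass to $\bar N = N/\gamma_c(N)$, which has class $c-1$, with the induced automorphism $\bar\varphi$ (whose norm with respect to the projected generating set is $\le \|\varphi\|_S$) and the induced element $\bar y = \bar x_0\,\bar\varphi(\bar x_0)^{-1} \in X_1^{\bar\varphi}$. By induction there is $\bar x \in \bar N$ with $\|\bar x\|$ bounded by $C\max\{\|\varphi\|_S,\|y\|_S\}^{k'}$ and $\bar x\,\bar\varphi(\bar x)^{-1} = \bar y$. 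Lift $\bar x$ to $x_1 \in N$ with $\|x_1\|_S$ controlled by the same bound (lifts of short words in $\bar N$ to short words in $N$, using a fixed transversal). Then $x_1\,\varphi(x_1)^{-1}$ and $y$ agree modulo $\gamma_c(N)$, so their ratio $w = (x_1\varphi(x_1)^{-1})^{-1}\, y \in \gamma_c(N) \le Z(N)$. The norm of $w$ is polynomially bounded in $\max\{\|\varphi\|_S,\|y\|_S\}$, since it is a product of boundedly many elements of controlled norm and $\varphi$ is applied to a short word.

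It remains to absorb the central correction $w$. Since $w^{-1} \in \gamma_c(N)$, consider $N_c^\varphi = \{x : x\varphi(x)^{-1}\in\gamma_c(N)\}$ and the morphism $\psi_{\varphi,c}\colon N_c^\varphi \to \gamma_c(N)/\gamma_{c+1}(N) = \gamma_c(N)$; note $x_1 \in N_c^\varphi$. We need to find $v \in N_c^\varphi$ with $v\,\varphi(v)^{-1} = w^{-1}$ and $\|v\|_S$ controlled, for then $x = x_1 v \cdot(\text{reordering, using that }w\text{ is central})$ — more precisely, since $w$ is central, $(x_1 v)\varphi(x_1 v)^{-1} = x_1\varphi(x_1)^{-1}\cdot v\varphi(v)^{-1} = x_1\varphi(x_1)^{-1} w^{-1}\cdots$ one checks this equals $y$. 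Here I use that $\psi_{\varphi,c}$ is a homomorphism and $N_\varphi = \im\psi_{\varphi,c}$ contains $w^{-1}$ by construction; a generating set for $N_c^\varphi$ of norm polynomially bounded in $\|\varphi\|_S$ is supplied by Theorem~\ref{twisted_pullback_norm_bound}, and then $v$ is obtained from $w^{-1}$ by Lemma~\ref{unique_preimage_bound} applied to the induced injective map $\bar\psi_{\varphi,c}$ on a finite-index subgroup of $N_c^\varphi/N_{c+1}^\varphi$, whose determinant $D_{\varphi,c}$ is polynomially bounded by Corollary~\ref{twisted_determinant_bound}; the extra index and the passage through $N_{c+1}^\varphi$ again cost only polynomial factors. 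Collecting the finitely many polynomial bounds yields the claimed constant $C$ and exponent $k$.

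\textbf{Main obstacle.} The delicate point is the final step: solving $v\varphi(v)^{-1} = w^{-1}$ with norm control, because $\psi_{\varphi,c}$ need not be injective on all of $N_c^\varphi$ and $w^{-1}$ need not lie in its literal image but only in the isolator-adjusted image. Managing the interplay of three polynomial inputs here — the norm of a generating set for $N_c^\varphi$ (Theorem~\ref{twisted_pullback_norm_bound}), the twisted determinant $D_{\varphi,c}$ (Corollary~\ref{twisted_determinant_bound}), and the Cramer-type bound of Lemma~\ref{unique_preimage_bound} on a suitable free abelian section — while keeping everything uniform in $\varphi$, is where the real work lies; the class reduction and lifting steps are routine by comparison.
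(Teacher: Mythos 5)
Your plan runs on the same engine as the paper's proof: at each graded level you solve $x\,\varphi(x)^{-1}\equiv y$ using the homomorphisms $\psi_{\varphi,i}$, with generating subsets of the twisted centralizers controlled by Theorem \ref{twisted_pullback_norm_bound} and preimages controlled by the Cramer-type bound of Lemma \ref{unique_preimage_bound}. The difference is organizational: the paper stays inside $N$ and does a single downward induction on the index $i$ with $y\in X_i^\varphi$, correcting the error one level of the lower central series at a time, while you induct on the nilpotency class by passing to $N/\gamma_c(N)$, lifting, and absorbing a single central error. Your version has the pleasant feature that the error $w$ is central, so the recombination $(x_1v)\,\varphi(x_1v)^{-1}=x_1\varphi(x_1)^{-1}\,v\varphi(v)^{-1}$ splits cleanly (in the in-$N$ induction the analogous recombination needs a small conjugation adjustment). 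Also, the ``main obstacle'' you flag is not actually there: since $y=x_0\varphi(x_0)^{-1}$ and $w$ is central, setting $u=x_1^{-1}x_0$ a two-line computation ($w=\varphi(x_1)\,u\varphi(u)^{-1}\,\varphi(x_1)^{-1}$ together with centrality of $w$) gives $w=u\varphi(u)^{-1}$, so $w$ lies in the honest image $X_c^\varphi$ of $\psi_{\varphi,c}$; no isolator adjustment is needed, and in fact no determinant bound either, since Lemma \ref{unique_preimage_bound} already yields the polynomial estimate (this is exactly how the paper argues, which never invokes Corollary \ref{twisted_determinant_bound} in this lemma). Note also the sign slip: you need $v\,\varphi(v)^{-1}=w$, not $w^{-1}$.

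The one genuine gap is in the inductive step: $N/\gamma_c(N)$ is nilpotent of class $c-1$ but need not be torsion free --- for instance a class-$2$ $\mathcal{F}$-group in which $\gamma_2(N)$ is generated by the square of a primitive central element has $2$-torsion in $N/\gamma_2(N)$ --- so the inductive hypothesis, which you state for $\mathcal{F}$-groups, does not apply to this quotient as written. Two repairs are available: (i) quotient instead by the isolator $\sqrt[N]{\gamma_c(N)}$, which yields a torsion-free quotient of class at most $c-1$; since the center of a $\mathcal{F}$-group is isolated, this isolator lies in $Z(N)$, so the error $w$ is still central, but it now lies in $Z(N)$ rather than in $\gamma_c(N)$, and the absorption step must use the subgroup $\{x\in N : x\,\varphi(x)^{-1}\in Z(N)\}$ (the preimage of the fixed subgroup of the induced automorphism of $N/Z(N)$, whose norm is again controlled by Theorem \ref{twisted_pullback_norm_bound} applied to the $\mathcal{F}$-group $N/Z(N)$) in place of $N_c^\varphi$; or (ii) strengthen the statement being proved by induction so that it covers finitely generated nilpotent groups with torsion. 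Either repair keeps all constants uniform in $\varphi$, so the strategy does go through, but as written the step you delegate to induction is not covered by the statement you are inducting on. The remaining points you gloss (norm of lifts from the quotient, $\|\bar\varphi\|\le\|\varphi\|_S$, the norm bound on $w$) are fine.
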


\begin{proof}
	We proceed by induction on $j$ such that $y \in X_{c+1-j}^\varphi$. If $j=0$ or thus $y \in X_{c+1}^\varphi = 1$, we can take $x = 1$. Now assume that $j > 0$ and write $c+1-j = i$. Consider the induced map $\bar{\psi}_{\varphi,i}: \faktor{N_i^\varphi}{N_{i+1}^\varphi} \to \faktor{\gamma_i(N)}{\gamma_{i+1}(N)}$. Use Theorem \ref{twisted_pullback_norm_bound} to find constants $C_1, k_1 > 0$ and a generating subset $S^\prime$ for $N_i^\varphi$ such that $\Vert s \Vert_S \leq C_1 \Vert \varphi \Vert_S^{k_1}$ for all $s \in S^\prime$. We apply Lemma \ref{unique_preimage_bound} to find $C_2, k_2 > 0$ such that there exists $x \in N_i^\varphi$ with $$\Vert x \Vert_{S^\prime} \leq C_2\max \{ \Vert \varphi \Vert_S, \Vert y \Vert_S \}^{k_2}$$ and $\bar{\psi}_{\varphi,i} (x \: N_{i+1}^\varphi) = x \: \varphi(x)^{-1} \gamma_{i+1}(N) = y \: \gamma_{i+1}(N)$. Write $z = x \: \varphi(x)^{-1}$. In particular, we get that $$\Vert x \Vert_S \leq C_1 C_2  \Vert \varphi \Vert_S^{k_1} \max \{\Vert \varphi \Vert_S, \Vert y \Vert_S \}^{ k_2} \leq C_1 C_2 \max \{\Vert \varphi \Vert_S, \Vert y \Vert_S \}^{k_1 + k_2}. $$ By construction, $z^{-1} \: y \in X_{i + 1}^\varphi$ with \begin{align*} \Vert z^{-1} y \Vert_S & \leq \Vert y \Vert_S + \Vert x \Vert_S + \Vert \varphi(x) \Vert_S \\  & \leq \Vert y \Vert_S  + C_1 C_2 \max \{\Vert \varphi \Vert_S, \Vert y \Vert_S \}^{k_1 + k_2} + C_1 C_2 \max \{\Vert \varphi \Vert_S, \Vert y \Vert_S \}^{k_1 + k_2 + 1},\end{align*} and so, we can use the inductive hypothesis to finish the proof. 
\end{proof}

For the rest of this section, we fix an automorphism $\map{\varphi}{N}{N}$ and work with the automorphisms $\map{\varphi_x}{N}{N}$ given by $\varphi_x(n) = x \: \varphi(n) \: x^{-1}$. Additionally, we denote the induced automorphism on $\faktor{N}{N^{m}}$ where $m \in \N$ as $\bar{\varphi}_{x,m}$ The subgroup $N_{\varphi_x}$ will be denoted as $N_x$ to simplify notation. Similarly, we denote $\left( \faktor{N}{ N^m}\right)_{\bar{\varphi}_{x,m}}$ via $N_{x,m}$. 

\begin{thm}\label{main_thm}
	Let $N$ be a $\mathcal{F}$-group with a finite generating subset $S$. Let $x \in N$ and $\varphi \in \Aut(N)$. There exist natural numbers $k_1,k_2$, and $k_3$ such that $$\Conj_{N,x,S}^\varphi(n) \preceq  (\|\varphi\|_S)^{k_1} \: \pr{\|x\|_S}^{k_2}  \: n^{k_3}$$ for any $\varphi \in \Aut(N)$. In particular, $\Conj_{N,S}^\varphi(n) \preceq \pr{\|\varphi\|_S}^{k_1}\:n^{k_2 + k_3}$ and $\Tconj_{N,S}(n) \preceq n^{k_1 + k_2 + k_3}$.
\end{thm}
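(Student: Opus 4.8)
The plan is to prove the estimate by induction on the nilpotency class $c = c(N)$, the case $c \le 1$ being exactly Proposition~\ref{twisted_abelian_separability}. So fix $c \ge 2$, assume the statement for every $\mathcal F$-group of smaller class, and fix $x \in N$, $\varphi \in \Aut(N)$ and $y \in N$ with $\|y\|_S \le n$ and $y \notin [x]_\varphi$; the goal is a finite quotient separating $y$ from $[x]_\varphi$ of order at most $(\|\varphi\|_S)^{k_1}(\|x\|_S)^{k_2}n^{k_3}$ for exponents depending only on $N$. Throughout I work with $\varphi_x$, using $\|\varphi_x\|_S \le 2\|x\|_S + \|\varphi\|_S$ and the identity $[x]_\varphi = X_1^{\varphi_x}\,x$, so that $y \notin [x]_\varphi$ means $yx^{-1} \notin X_1^{\varphi_x}$. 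Since $\ga_c(N)$ is characteristic, $\varphi$ descends to $\bar\varphi$ on $\bar N = \faktor{N}{\ga_c(N)}$, which has class $< c$, and $\pi_c$ carries $[x]_\varphi$ onto $[\bar x]_{\bar\varphi}$. If $\bar y \notin [\bar x]_{\bar\varphi}$, the inductive hypothesis for $\bar N$ (with $\|\bar x\|_{\bar S}\le\|x\|_S$, $\|\bar y\|_{\bar S}\le n$, $\|\bar\varphi\|_{\bar S}\le\|\varphi\|_S$) supplies a finite quotient of $\bar N$, hence of $N$ through $\pi_c$, separating $y$ from $[x]_\varphi$ and of the required size. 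So from now on assume $\bar y\bar x^{-1} \in X_1^{\overline{\varphi_x}}$ in $\bar N$.

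In this remaining case I would first produce a central obstruction. Apply Lemma~\ref{norm} in $\bar N$ to the automorphism induced by $\varphi_x$: there is $\bar w \in \bar N$ with $\bar w\,\overline{\varphi_x}(\bar w)^{-1} = \bar y\bar x^{-1}$ and $\|\bar w\|_{\bar S}$ bounded by a polynomial in $\|\varphi_x\|_S$ and $\|\bar y\bar x^{-1}\|_{\bar S} \le \|x\|_S + n$; lift $\bar w$ along a geodesic word to $w \in N$ of the same norm. Set $x' = w\,x\,\varphi(w)^{-1} \in [x]_\varphi$ and $g = y\,(x')^{-1}$, so $g \in \ga_c(N)$ and $\|g\|_S,\|x'\|_S$ are polynomially bounded in $\|\varphi\|_S$, $\|x\|_S$ and $n$. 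Since $y$ and $x'$ are $\varphi$-twisted conjugate exactly when $g\,x' \sim_\varphi x'$, Lemma~\ref{nearly_twisted_conjugate_lemma} (with $i = c$, using $\ga_{c+1}(N) = 1$) gives $y \in [x]_\varphi \iff g \in X_c^{\varphi_{x'}} = N_{\varphi_{x'}}$; and as conjugation sends $X_1^{\varphi_{x'}}$ to $X_1^{\varphi_x}$ while fixing $\ga_c(N)$ pointwise, in fact $N_{\varphi_{x'}} = N_{\varphi_x} =: N_x$. Thus $g \in \ga_c(N)$ is central, $g \notin N_x$, and $\|N_x\|_S$ is polynomially bounded in $\|\varphi_x\|_S$ by Theorem~\ref{twisted_pullback_norm_bound} applied to $\varphi_x$ (bounding $\|N_c^{\varphi_x}\|_S$ and pushing those generators through $\psi_{\varphi_x,c}$), hence polynomial in $\|\varphi\|_S$ and $\|x\|_S$. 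It remains to separate $g$ from the central subgroup $N_x$ in a quotient of controlled shape.

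For this last step I would pick a prime $p$ and an integer $k$ with $\pi_{p^k}(g) \notin \pi_{p^k}(N_x)$ in $\faktor{N}{N^{p^k}}$ and $p^k$ bounded by a polynomial in $\|N_x\|_S$ and $\|g\|_S$; this is an effective residual-$p$ statement separating the central element $g$ from the finitely generated central subgroup $N_x \le \ga_c(N) \cong \Z^r$ of controlled norm and determinant, of the kind underlying Proposition~\ref{centralsubgroup} and Lemma~\ref{blackburn_lemma}. This quotient separates $g$ from $N_x$ but not yet $y$ from $[x]_\varphi$, since the image of $[x]_\varphi$ in a finite quotient can be strictly larger than expected. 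To repair this, enlarge to $\faktor{N}{N^{p^{k+k_0}}}$ with $k_0 = v_p(D_{\varphi_x}) + k^\ast(p,c-1)$: if $\pi_{p^{k+k_0}}(y)$ lay in $\pi_{p^{k+k_0}}([x]_\varphi) = \pi_{p^{k+k_0}}(X_1^{\varphi_{x'}}x')$, then $\pi_{p^{k+k_0}}(g)$ would be the image of an element of $X_1^{\varphi_x}$ lying in $\ga_c\!\left(\faktor{N}{N^{p^{k+k_0}}}\right)$, hence would lie in $N_{x,p^{k+k_0}}$; applying the projection $\rho$ and Corollary~\ref{twisted_central_pullback_matrix_reduction} to $\varphi_x$ gives $\pi_{p^k}(g) \in \rho(N_{x,p^{k+k_0}}) = \pi_{p^k}(N_x)$, a contradiction. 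Finally $p^{k_0}$ is polynomially bounded in $\|\varphi_x\|_S$ by Corollary~\ref{lemma_2_blackburn_generalization_bound}, so $\left|\faktor{N}{N^{p^{k+k_0}}}\right| = (p^{k+k_0})^{h(N)}$ is bounded by $(\|\varphi\|_S)^{k_1}(\|x\|_S)^{k_2}n^{k_3}$ for suitable $k_i$. Taking $k_1,k_2,k_3$ at least as large as the exponents arising in both cases proves the bound for $\Conj^\varphi_{N,x,S}(n)$; substituting $\|x\|_S \le n$, and then $\|\varphi\|_S \le n$, into the definitions of $\Conj^\varphi_{N,S}$ and $\Tconj_{N,S}$ yields the two displayed consequences.

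The part I expect to be the main obstacle is precisely that final step: a size-controlled finite quotient separating the \emph{central} element $g$ from the \emph{central subgroup} $N_x$ need not separate $y$ from the set $[x]_\varphi$, which is neither central nor a subgroup. Controlling how $[x]_\varphi$ expands inside congruence quotients is exactly what the twisted centralizers and the twisted determinant $D_\varphi$ are designed for, and Corollary~\ref{twisted_central_pullback_matrix_reduction} together with the quantitative estimate of Corollary~\ref{lemma_2_blackburn_generalization_bound} is the technical heart; apart from that, the bookkeeping needed to keep every norm estimate polynomial under the substitution $\varphi \rightsquigarrow \varphi_x$ and under passage to $\bar N$ is what requires care.
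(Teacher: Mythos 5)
Your proposal is correct and follows essentially the same route as the paper's proof: induction on nilpotency class with Proposition~\ref{twisted_abelian_separability} as base case, reduction (via Lemma~\ref{norm}) to a central discrepancy in $\ga_c(N)$ lying outside $N_{\varphi_x}$, the norm bound on $N_{\varphi_x}$ from Theorem~\ref{twisted_pullback_norm_bound}, separation of that central element by Proposition~\ref{centralsubgroup} in a congruence quotient, and the $k_0 = v_p(D_{\varphi_x}) + k^\ast(p,c-1)$ enlargement controlled by Corollaries~\ref{twisted_central_pullback_matrix_reduction} and~\ref{lemma_2_blackburn_generalization_bound}. The only step you leave slightly implicit --- converting the quotient from Proposition~\ref{centralsubgroup} into one of the form $\faktor{N}{N^{p^k}}$ by passing to a Sylow factor and invoking Lemma~\ref{congruence_subgroup_of_finite_index_subgroup} --- is exactly how the paper does it, so there is no gap.
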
  
\begin{proof}
	We proceed by induction on nilpotency class, and observe that the base case is given by Proposition \ref{twisted_abelian_separability}. Thus, we may assume that $c(N) > 1$. 
	
	Let $\varphi \in \Aut(N)$ and fix $x \in N$. Suppose that $y \in \N$ satisfies $x \nsim_\varphi y$ and $\|y\|_S \leq n$. Our goal is to construct a finite quotient $N / K$ such that $\pi_K(y) \notin \pi_{K}([x]_{\tilde{\varphi}})$ and then bound $|N/K|$ in terms of $\|x\|_S$, $\|\varphi\|_S$, and $n$. 
	
	Denote by $\bar{\varphi}$ the automorphism of $\faktor{N}{\ga_{c(N)}(N)}$ induced by $\varphi$. If $x \: \ga_{c(N)}(N) \nsim_{\bar{\varphi}} y \: \ga_{c(N)}(N)$, then the inductive hypothesis implies there exist integers $k_1,k_2, k_3$ and $C_1 > 0$ satisfying the following. There exists a surjective group morphism to a finite group $\map{\pi_1}{\faktor{N}{\ga_{c(N)}(N)}}{Q_1}$ such that $\pi_1(y \: \ga_{c(N)}(N)) \notin [\pi_1( x \: \ga_{c(N)}(N))]_{\bar{\varphi}}$ and where $$|Q_1| \leq  C_1 \: (\|\bar{\varphi}\|_{\bar{S}})^{k_1} \: (\|\bar{x}\|_S)^{k_2} \: n^{k_3}.$$ 
	
Thus, we can assume that $x \: \ga_{c(N)}(N) \nsim_{\bar{\varphi}} y \: \ga_{c(N)}(N)$. Writing $y = y  x^{-1} x$, Lemma \ref{nearly_twisted_conjugate_lemma} implies that $y x^{-1} \in X_1^{\varphi_x} \gamma_c(N)$. By Lemma \ref{norm}, there exists $y_0$ with the norm bounded as described in the lemma and $ y x^{-1} = y_0 \varphi_x(y_0)^{-1} z$ for some $z \in \gamma_c(N)$. Solving for $z$, we get that there exist constants $C_2, k_4$ such that $$\Vert z \Vert_S \leq C_2 \left(   \Vert \varphi_x \Vert_S \: \Vert x \Vert_S \: n \right)^{k_4}.$$ Now we see that $y_0^{-1} y \varphi \left(y_0 \right) =  x  z$ or thus $y \sim_{\varphi} xz$. 
	
	Theorem \ref{twisted_pullback_norm_bound} implies that $\|N_x\|_S\leq C_3 (\|\varphi_x\|_S)^{k_5}$ where $C_3 > 0$ is some constant and $k_5 \in \N$. For each $s \in S$, we may write $\|\varphi_x(s)\|_S = \|x \: \varphi(s) \: x^{-1}\|_S \leq 2\:\|x\|_S + \|\varphi(s)\|_S$. Subsequently,
	$\|\varphi_x\|_S \leq 2 \: \|x\|_S \: \|\varphi\|_S$, and thus, $\|N_x\|_S \leq C_4 \: (\|\varphi\|_S)^{k_5} \: \pr{\|x\|_S}^{k_5}$ where $C_4 = C_3 \: 2^{k_5}$. 
	
	Now fix $k_6 \in \N$ and constant $C_5 > 0$ from Proposition \ref{centralsubgroup} for $N$. Lemma \ref{nearly_twisted_conjugate_lemma} implies that $z \notin N_x$. Therefore, there exists a surjective group morphism $\map{\pi_2}{N}{Q_2}$ such that $\pi_2(z) \notin \pi_2(N_x)$ and where 
	\begin{align*}
	|Q_2| &\leq C_5 \:( \max\{\|N_x\|_S, \log \left( \|z\|_S \right)\})^{k_6} \leq C_5 \left(\|N_x\|_S \cdot \|z\|_S\right)^{k_6} \\
& \leq 2^{k_4} \left(C_2 C_4\right)^{k_6} C_5 \Vert \varphi \Vert_S^{(k_4 + k_5) k_6 } \Vert x \Vert_S^{(2k_4 + k_5) k_6} n^{k_4 k_6} .
\end{align*} 
Moreover, we may assume that $\vert Q_2 \vert = p^\al$ where $p$ is a prime. Lemma \ref{congruence_subgroup_of_finite_index_subgroup} implies that $N^{p^{\al}} \leq \ker(\pi_2)$, and subsequently, $\pi_{p^{\al}}(z) \notin \pi_{p^{\al}}(N_x)$. 
	
	Using the natural number $k^\ast(p,c(N)-1)$ and notation from Corollary \ref{twisted_central_pullback_matrix_reduction}, it follows that $\rho_1(N_{x,m}) = \pi_{m}(N_x)$ where $m = p^{\al + v_p(D_{\varphi_x}) + k^\ast(p,c-1)}$. 	We claim that $\pi_{m}(x \: z) \notin \pi_{m}([x]_\varphi)$. Indeed otherwise, $\pi_{m}(z) \in N_{x,m} .$ Thus, $\pi_{p^\al} (z)= \rho_1(\pi_m(z)) \in \pi_{p^\al}(N_x)$ which is a contradiction.

	For the bound on the order, we combine the previous inequalities. Corollary \ref{lemma_2_blackburn_generalization_bound} implies there exists a constant $C_6 > 0$ and an integer $k_7$ such that $p^{k_0 + v_p(D_\varphi)} \leq C_6 \: (\|\varphi\|_S)^{k_7}$.	Therefore,
	\begin{align*}
	|N / N^{m}| &\leq (p^{\al + k_0 + v_p(D_\varphi)})^{h(N)} =  p^{\al \: h(N)} \: \left( C_6 \: (\|\varphi\|_S)^{k_7}\right)^{h(N)},
	\end{align*}
	and thus, the first statement holds because of the bounds on $p^\al$. 
	The last two statements of the theorem follow immediately.
\end{proof}
\section{Virtually Nilpotent Groups}
This section is broken up into two parts. The first part is a technical detour,  whereas the second subsection contains the main results of the section.

\subsection{Separability of Extensions of Twisted Conjugacy Separable Groupss}
We start this subsection with the following definition.
\begin{defn}
	Let $X \subset G$ be a subset (not necessarily finite). Let $\map{\Farb_{G,X,S}}{\N}{\N}$ be defined as $$\Farb_{G,X,S}(n) = \max \{D_{G}(X,g) \: | \: g \in \left( G\setminus  X  \right) \cap B_{G,S}(n) \},$$ where we take $\max \emptyset = 0$. 
\end{defn}

Recall that the function $D_{G}(X,\cdot)$ was introduced on page \pageref{def_reldepfun}. The function $\Farb_{G,X,S}$ measures the complexity to separate elements of $G$ from the set $X$ in finite quotients. The function does not depend on the choice of generating subset.

\begin{lemma}
	Let $G$ be a finitely generated group, and let $X \subset G$ be a subset. Let $S_1$ and $S_2$ is a two finite generating subsets for $G$, then $\Farb_{G,X,S_1}(n) \approx \Farb_{G,X,S_2}(n)$.
\end{lemma}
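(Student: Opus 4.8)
The plan is to reduce the statement to the standard bi-Lipschitz comparison of word metrics, exactly as in \cite[Lem 1.1]{Bou_Rabee10}. The first observation is that the relative depth function $D_G(X, \cdot)$ is intrinsic to the pair $(G, X)$ and makes no reference to any generating subset: it is defined as a minimum taken over all finite quotients of $G$. Consequently, the only place a generating subset enters the definition of $\Farb_{G,X,S}$ is through the word ball $B_{G,S}(n)$ over which the maximum is taken.

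Next I would recall that, since $S_1$ and $S_2$ are both finite generating subsets of $G$, there is a constant $C \in \N$ with $C \geq 1$ such that $\|g\|_{S_2} \leq C \, \|g\|_{S_1}$ and $\|g\|_{S_1} \leq C \, \|g\|_{S_2}$ for every $g \in G$; one may take $C = \max\big(\{\|s\|_{S_2} : s \in S_1\} \cup \{\|s\|_{S_1} : s \in S_2\}\big)$, which is finite because $S_1$ and $S_2$ are finite. In particular $B_{G,S_1}(n) \subseteq B_{G,S_2}(Cn)$ for all $n$.

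The conclusion is then immediate. For any $g \in (G \setminus X) \cap B_{G,S_1}(n)$ we have $g \in (G \setminus X) \cap B_{G,S_2}(Cn)$, so $D_G(X,g) \leq \Farb_{G,X,S_2}(Cn)$; taking the maximum over all such $g$ (and using the convention $\max \emptyset = 0$ to cover the case where no such $g$ exists) yields $\Farb_{G,X,S_1}(n) \leq \Farb_{G,X,S_2}(Cn) \leq C \, \Farb_{G,X,S_2}(Cn)$, i.e. $\Farb_{G,X,S_1} \preceq \Farb_{G,X,S_2}$. Exchanging the roles of $S_1$ and $S_2$ gives the reverse comparison, and hence $\Farb_{G,X,S_1}(n) \approx \Farb_{G,X,S_2}(n)$.

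There is essentially no obstacle here; the only points requiring a moment's care are the degenerate case of a ball containing no element outside $X$, handled by the convention $\max \emptyset = 0$, and the fact that a single constant $C$ simultaneously controls the change of generating subset and can be absorbed into the definition of $\preceq$. I would keep the write-up short and simply refer to the analogous arguments in \cite[Lem 1.1]{Bou_Rabee10} and \cite[Lem 2.1]{LLM}.
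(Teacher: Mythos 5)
Your proposal is correct and follows exactly the standard change-of-generating-set argument that the paper itself invokes (it gives no written proof, only the remark that the proof is as in \cite[Lem 1.1]{Bou_Rabee10}): the depth function $D_G(X,\cdot)$ is independent of the generating subset, so the bi-Lipschitz comparison of word lengths immediately gives $\Farb_{G,X,S_1}(n) \leq \Farb_{G,X,S_2}(Cn)$ and symmetrically. Nothing is missing.
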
		
The proof is similar to \cite[Lem 1.1]{Bou_Rabee10} (see also \cite[Lem 1.2]{Bou_Rabee11} and \cite[Lem 2.1]{LLM}). We first give some lemmas about the function $\Farb_{G,X,S}$ before coming to our main result.

Let $H$ be a finite index normal subgroup of $G$, and let $X \subset H$ be a separable subset. For any $x \in H \setminus X$, the following lemma relates the complexity of separating $x$ from $X$ in $G$ to the complexity of separating $x$ from $X$ in $H$.

\begin{lemma}\label{rf_extension}
	Let $G$ be a finitely generated group, and let $H$ be a finite index normal subgroup. Suppose that $S_1$ and $S_2$ are finite generating subsets for $G$ and $H$ respectively. Suppose that $X \subseteq H$ is a separable subset. Then $\Farb_{G,X,S_1}(n) \preceq (\Farb_{H,X,S_2}(n))^{[G : H]}$.
\end{lemma}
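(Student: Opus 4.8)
The plan is to separate any $g \in G \setminus X$ from $X$ by a finite quotient of $G$ whose order is controlled by $(\Farb_{H,X,S_2}(\cdot))^{[G:H]}$, splitting into two cases according to whether or not $g$ lies in $H$. The easy case is $g \in G \setminus H$: since $X \subseteq H$ and $H \nsub G$ has finite index, the projection $\map{\pi_H}{G}{G/H}$ kills $X$ but not $g$, so the finite quotient $G/H$, of order $[G:H]$, already separates $g$ from $X$ and contributes only a constant to $\Farb_{G,X,S_1}$.

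Now suppose $g \in H \setminus X$ with $\|g\|_{S_1} \le n$. Because $H$ has finite index in $G$, the inclusion $H \hookrightarrow G$ is a quasi-isometry, so there is a constant $C_0$ (depending only on $G$, $H$, $S_1$, $S_2$) with $\|g\|_{S_2} \le C_0(\|g\|_{S_1}+1)$. Since $X$ is separable in $H$, we can fix a surjection $\surj{q}{H}{Q}$ onto a finite group with $q(g) \notin q(X)$ and $|Q| \le \Farb_{H,X,S_2}(\|g\|_{S_2}) \le \Farb_{H,X,S_2}(C_0(n+1))$; let $M = \ker q \nsub H$, so $[H:M] = |Q|$ and $x^{-1}g \notin M$ for all $x \in X$. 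The obstacle is that $M$ need not be normal in $G$; the fix is to pass to its normal core. The crucial observation is that, since $M \nsub H$ and $H \nsub G$, the conjugate $tMt^{-1}$ depends only on the coset $tH \in G/H$: if $t' = th$ with $h \in H$, then $t'Mt'^{-1} = t(hMh^{-1})t^{-1} = tMt^{-1}$. Hence $M_0 := \bigcap_{tH \in G/H} tMt^{-1}$ is the normal core of $M$ in $G$, so it is normal in $G$, and it is an intersection of exactly $[G:H]$ conjugates of $M$, each of index $|Q|$ in $H$; therefore $[H:M_0] \le |Q|^{[G:H]}$ and $[G:M_0] \le [G:H]\,|Q|^{[G:H]}$.

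Finally, $G/M_0$ separates $g$ from $X$: for $x \in X$ we have $x^{-1}g \in H$ (as $x,g \in H$), and since $q(g) \ne q(x)$ also $x^{-1}g \notin M$, hence $x^{-1}g \notin M_0$ because $M_0 \subseteq M$, i.e. $gM_0 \ne xM_0$ in $G/M_0$. Combining the index estimate with the word-length comparison and the first case gives
$$
D_G(X,g) \le [G:H]\,\bigl(\Farb_{H,X,S_2}(C_0(\|g\|_{S_1}+1))\bigr)^{[G:H]},
$$
and absorbing $C_0$ and the factor $[G:H]$ into the $\preceq$ relation (using that $\Farb_{H,X,S_2}$ is non-decreasing and that $|Q| \ge 2$ whenever the second case genuinely occurs, which disposes of any degenerate small-$n$ behaviour) yields $\Farb_{G,X,S_1}(n) \preceq (\Farb_{H,X,S_2}(n))^{[G:H]}$. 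I expect the normal-core step to be the only real content: it is precisely what makes the exponent the finite index $[G:H]$ rather than the potentially much larger $[G:M]$, and it uses in an essential way that both $M \nsub H$ and $H \nsub G$.
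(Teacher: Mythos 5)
Your proof is correct, and its overall skeleton (split into $g \notin H$ versus $g \in H \setminus X$, use that $H$ is undistorted to compare $\|\cdot\|_{S_1}$ and $\|\cdot\|_{S_2}$, separate inside $H$ first, then promote to a finite quotient of $G$ of order at most a constant times $|Q|^{[G:H]}$) matches the paper's. The one step where you genuinely diverge is the promotion step: the paper gets its finite quotient of $G$ by appealing to the fact that finite groups are linear, producing an ``induced'' morphism $\tilde{\pi}\colon G \to \tilde{Q}$ with $\tilde{\pi}|_H = \pi$ and $|\tilde{Q}| \leq |Q|^{[G:H]}$, whereas you take $M = \ker q$ and pass to its normal core $M_0 = \bigcap_{tH \in G/H} tMt^{-1}$, observing that normality of $M$ in $H$ (and of $H$ in $G$) means there are only $[G:H]$ distinct conjugates, each of index $|Q|$ in $H$, so $[G:M_0] \leq [G:H]\,|Q|^{[G:H]}$ and $G/M_0$ still separates $g$ from $X$ because $M_0 \subseteq M$. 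Your route is the more elementary and self-contained one: it avoids the somewhat delicate assertion that an induced representation restricts on $H$ \emph{exactly} to the original morphism (in general one only recovers $\pi$ up to a block/projection), and it makes completely transparent where the exponent $[G:H]$ comes from, namely from intersecting $[G:H]$ conjugates rather than $[G:M]$ of them. The paper's phrasing is shorter but leans on representation-theoretic language for what is, quantitatively, the same core bound; both arguments give the same asymptotic conclusion $\Farb_{G,X,S_1}(n) \preceq (\Farb_{H,X,S_2}(n))^{[G:H]}$.
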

\begin{proof}
	Since $H$ is a finite index subgroup, $H$ is an undistorted subgroup. Subsequently, there exists $C > 0$ such that $\|x\|_{S_2} \leq C \: \|x\|_{S_1}$ for all $x \in H$. Let $x \in G$ such that $x \in \left(G\setminus  X\right) \cap B_{G,S_1}(n)$. Suppose that x is not an element of H. Since $X \subseteq H$, we may pass to the quotient $\faktor{G}{H}$ which is a finite group by assumption. Thus, we may assume that $x \in H \setminus X$. 

	Note that $\Vert x \Vert_{S_2} \leq C n$. Hence, there exists a surjective group morphism $\map{\pi}{H}{Q}$ such that $\pi(x) \notin \pi(X)$ where $|Q| \leq \Farb_{H,X,S_2}(C n).$ Since finite groups are linear, we have a finite group $\tilde{Q} \supset Q$ and an induced morphism $\map{\tilde{\pi}}{G}{\tilde{Q}}$ such that $\tilde{\pi}$ restricted to $H$ is equal to our original group morphism $\pi$, and moreover, $|\tilde{Q}| \leq |Q|^{|G:H|}$. Thus, $|\tilde{Q}| \leq \Farb_{H,X,S_2}(C n))^{|G:H|}$, and subsequently, $\Farb_{G,X,S_1}(n) \preceq (\Farb_{H,X,S_2}(n))^{|G:H|}$.
\end{proof}

For separable subsets $\set{X_i}_{i=1}^k$ and $x \in G \setminus \cup_{i=1}^k X_i$, this next lemma relate the complexity of separating $x$ from $\cup_{i=1}^k X_i$ to the complexity of separating $x$ from each $X_i$ individually.
\begin{lemma}\label{rf_union}
	Let $G$ be a finitely generated group with a finite generating subset $S$, and let $\{X_i\}_{i=1}^k$ be a finite collection of proper separable subsets. If $Y = \cup_{i=1}^k X_i$, then $$\Farb_{G,Y,S}(n) \preceq \prod_{i=1}^{k}\Farb_{G,X_i,S}(n).$$
\end{lemma}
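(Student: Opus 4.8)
The plan is to separate $x$ from each $X_i$ individually and then glue the resulting finite quotients together by a diagonal map. First I would fix an element $x \in (G \setminus Y) \cap B_{G,S}(n)$; since $x \notin X_i$ for every $i$ and each $X_i$ is separable, for each $i$ there is a surjective morphism $\pi_i \colon G \twoheadrightarrow Q_i$ onto a finite group with $\pi_i(x) \notin \pi_i(X_i)$ and $|Q_i| \leq \Farb_{G,X_i,S}(n)$, by definition of $\Farb_{G,X_i,S}$.

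Next I would consider the diagonal homomorphism $\pi = (\pi_1, \dots, \pi_k) \colon G \to Q_1 \times \cdots \times Q_k$ and set $Q = \pi(G)$, so that $\pi \colon G \twoheadrightarrow Q$ is a surjection onto a finite group with $|Q| \leq \prod_{i=1}^{k} |Q_i| \leq \prod_{i=1}^{k} \Farb_{G,X_i,S}(n)$. The key point is that $\pi$ separates $x$ from the whole union $Y$, not just from each piece: if $y \in Y$, then $y \in X_j$ for some $j$, so $\pi_j(y) \in \pi_j(X_j)$ while $\pi_j(x) \notin \pi_j(X_j)$; hence the $j$-th coordinates of $\pi(x)$ and $\pi(y)$ differ, so $\pi(x) \neq \pi(y)$. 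Therefore $\pi(x) \notin \pi(Y)$, and consequently $D_G(Y, x) \leq |Q| \leq \prod_{i=1}^{k} \Farb_{G,X_i,S}(n)$.

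Finally, taking the maximum over all $x \in (G \setminus Y) \cap B_{G,S}(n)$ — the case $G \setminus Y = \emptyset$ being trivial since $\max \emptyset = 0$ — yields $\Farb_{G,Y,S}(n) \leq \prod_{i=1}^{k} \Farb_{G,X_i,S}(n)$, which is in particular the claimed $\preceq$ bound (indeed with implied constant $1$); the same computation also shows that $Y$ is separable. There is no genuine obstacle in this argument: the only subtlety worth spelling out is precisely the coordinatewise observation above, namely that the diagonal quotient distinguishes $x$ from each element of $Y$ because any such element already lies in one of the $X_i$.
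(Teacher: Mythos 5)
Your argument is correct and is essentially the paper's proof: the image of your diagonal map $\pi=(\pi_1,\dots,\pi_k)$ is precisely the quotient $G/K$ with $K=\cap_{i=1}^k\ker(\pi_i)$ used in the paper, and both arguments give the same bound $\prod_{i=1}^{k}\Farb_{G,X_i,S}(n)$. The coordinatewise observation you spell out (any $y\in Y$ lies in some $X_j$, where $\pi_j$ already separates it from $x$) is exactly the step the paper summarizes by ``hence $\pi_K(x)\notin\pi_K(Y)$.''
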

\begin{proof}
	Let $x \in G$ such that $x \in \left( G\setminus Y \right) \cap B_{G,S}(n)$. It follows that $x \in \left( G \setminus  X_i \right) \cap B_{G,S}(n)$. Thus, there exists a surjective group morphism $\map{\pi_i}{G}{Q_i}$ such that $\pi(x) \notin \pi(X_i)$ and $|Q| \leq \Farb_{G,X_i,S}(n)$. Let $K = \cap_{i=1}^k \ker (\pi_i)$. By selection, $\pi_{K}(x) \notin \pi_K (X_i)$ for each $i$, and hence, $\pi_K(x) \notin \pi_K(Y)$. Now $\vert \faktor{G}{K} \vert \leq \prod_{i=1}^k |Q_i| \leq \prod_{i=1}^{k}\Farb_{G,X_i,S}(n).$ We conclude that $\Farb_{G,Y,S}(n) \preceq \prod_{i=1}^k \Farb_{G,X_i,S}(n)$.
\end{proof}

This last lemma relates the complexity of separating an element $x \in G/X$ from $X$ to the complexity of separating $x \: y$ from $X \cdot y$.
\begin{lemma}\label{rf_translation}
	Let $G$ be a finitely generated subgroup with a finite generating subset $S$. Suppose that $X \subset G$ is a separable subset, and let $y \in G$. Then $\Farb_{G,X \cdot y, S}(n) \approx \Farb_{G,X,S}(n)$.
\end{lemma}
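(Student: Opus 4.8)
The plan is to exploit that right multiplication by $y$ is a bijection of $G$ which descends to a bijection of every finite quotient of $G$. Consequently it transforms finite quotients separating an element from $X$ into finite quotients separating the translated element from $X \cdot y$, altering word lengths only by the constant $\|y\|_S$. So morally the two functions $\Farb_{G,X \cdot y, S}$ and $\Farb_{G,X,S}$ record the same data up to a constant shift of the argument, which is exactly what $\approx$ allows.

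\textbf{Step 1 (translation does not affect separation).} First I would record the elementary observation that for any surjection $\map{\pi}{G}{Q}$ onto a finite group and any $z \in G$ one has $\pi(z) \in \pi(X \cdot y)$ if and only if $\pi(z \: y^{-1}) \in \pi(X)$; indeed $\pi(X \cdot y) = \pi(X) \: \pi(y)$ and $q \mapsto q \: \pi(y)^{-1}$ is a bijection of $Q$. Equivalently, $\pi$ separates $z$ from $X \cdot y$ precisely when $\pi$ separates $z \: y^{-1}$ from $X$; in particular $X \cdot y$ is separable if and only if $X$ is.

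\textbf{Step 2 (one inequality).} To prove $\Farb_{G,X \cdot y,S}(n) \preceq \Farb_{G,X,S}(n)$, take $z \in \pr{G \setminus (X \cdot y)} \cap B_{G,S}(n)$. Then $z \: y^{-1} \in G \setminus X$ and $\|z \: y^{-1}\|_S \leq n + \|y\|_S$. Since $X$ is separable, there is a surjection $\map{\pi}{G}{Q}$ onto a finite group with $\pi(z \: y^{-1}) \notin \pi(X)$ and $|Q| \leq \Farb_{G,X,S}(n + \|y\|_S)$. By Step 1, $\pi(z) \notin \pi(X \cdot y)$, so $D_G(X \cdot y, z) \leq \Farb_{G,X,S}(n + \|y\|_S)$. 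As $\|y\|_S$ is a fixed constant, $n + \|y\|_S \leq (1 + \|y\|_S)\,n$ for $n \geq 1$, and the claimed inequality follows directly from the definition of $\preceq$.

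\textbf{Step 3 (the reverse inequality and conclusion).} The reverse inequality follows by running the same argument verbatim with $X$ and $X \cdot y$ interchanged, using $X = (X \cdot y) \cdot y^{-1}$ together with $\|y^{-1}\|_S = \|y\|_S$. Combining the two inequalities gives $\Farb_{G,X \cdot y,S}(n) \approx \Farb_{G,X,S}(n)$. I do not expect a genuine obstacle here; the only point requiring a moment of care is bookkeeping the additive shift $\|y\|_S$ inside the argument of $\Farb$ and confirming that it is absorbed by the definition of $\preceq$ (it would not be, for instance, under an exact equality of functions).
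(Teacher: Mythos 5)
Your proof is correct and follows essentially the same route as the paper: translate the element by $y^{-1}$, invoke separability of $X$ to get a finite quotient of size at most $\Farb_{G,X,S}(n+\|y\|_S)$, observe that right translation is a bijection so the same quotient separates the original element from $X\cdot y$, and absorb the additive shift $\|y\|_S$ into the constant allowed by $\preceq$. Your version is in fact slightly more careful than the paper's, which proves only one inequality and leaves the symmetric one implicit.
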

\begin{proof}
	We need only show that $\Farb_{G,X,S}(n) \preceq \Farb_{G,X \cdot y, S}(n)$. Suppose that $x \in G$ such that $\|x\|_S \leq n$ and $x \notin X \cdot y$. That implies that $x \: y^{-1} \notin X$. Therefore, there exists a surjective group morphism $\map{\pi}{G}{Q}$ such that $\pi(x \: y^{-1}) \notin \pi(X)$ and $|Q| \leq \Farb_{G,X,S}(\|y\|_S + n) \leq \Farb_{G,X,S}(\|y\|_S \cdot n) $. It follows that $\pi(x) \notin \pi(X \cdot y)$ since right translation is a bijection of $G$. Therefore $\Farb_{G,X \cdot y, S}(n) \preceq \Farb_{G,X,S}(n)$.
\end{proof}

Suppose $G$ is a finite extension of $H$. The following proposition shows that the conjugacy class of any element of $G$ can be written as a finite union of right translates of twisted conjugacy classes of elements in $H$. The following proof follows \cite[Thm 5.2]{Felstyn_1}.
\begin{prop}\label{conjugacy_union}
	Suppose that $G$ is a finite generated group that contains a finite index characteristic subgroup $H$, and let $\set{s_i}_{i=1}^{|G:H|}$ be a set of representatives for the right cosets of $H$. Fix an automorphism $\varphi \in \Aut(G)$ and let $f_{x,i}$ be the automorphism of $H$ induced by conjugation by $x_i = s_i \: x \: \varphi(s_i)^{-1}$. If $\bar{\varphi}$ is the automorphism of $H$ induced by $\varphi$, then
	$$[x]_\varphi = \bigcup_{i=1}^{[G:H]} [1_H]_{f_{x,i} \circ \bar{\varphi}} \cdot \: x_i.$$
\end{prop}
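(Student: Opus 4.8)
The plan is to prove the claimed decomposition by a direct double inclusion, parametrizing the twisted conjugacy class $[x]_\varphi$ according to which coset of $H$ the conjugating element lies in. Write any $g \in G$ uniquely as $g = h \, s_i$ with $h \in H$ and $i \in \{1, \dots, [G:H]\}$; then $g \, x \, \varphi(g)^{-1} = h \, s_i \, x \, \varphi(s_i)^{-1} \varphi(h)^{-1} = h \, x_i \, \varphi(h)^{-1}$. The first step is to rewrite this as an element of a single twisted conjugacy class of $H$ times the fixed translate $x_i$. Since $H$ is characteristic, $\bar\varphi = \varphi|_H \in \Aut(H)$, and since $x_i \in G$ normalizes $H$ (again using that $H$ is characteristic, hence normal), conjugation by $x_i$ induces $f_{x,i} \in \Aut(H)$. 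I would then compute
\[
h \, x_i \, \varphi(h)^{-1} = \bigl( h \, x_i \, \varphi(h)^{-1} x_i^{-1} \bigr) x_i = \bigl( h \, (f_{x,i} \circ \bar\varphi)(h)^{-1} \bigr) x_i,
\]
using that $x_i \, \varphi(h)^{-1} x_i^{-1} = f_{x,i}(\varphi(h)^{-1}) = \bigl((f_{x,i} \circ \bar\varphi)(h)\bigr)^{-1}$. The parenthesized factor is exactly an element of $[1_H]_{f_{x,i} \circ \bar\varphi}$, so $g \, x \, \varphi(g)^{-1} \in [1_H]_{f_{x,i}\circ\bar\varphi}\cdot x_i$; this proves the inclusion $\subseteq$.

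For the reverse inclusion, start with an arbitrary element $h \, (f_{x,i}\circ\bar\varphi)(h)^{-1} \, x_i$ with $h \in H$ and run the computation above in reverse: it equals $h \, x_i \, \varphi(h)^{-1} = h \, s_i \, x \, \varphi(s_i)^{-1}\varphi(h)^{-1} = (h s_i)\, x\, \varphi(h s_i)^{-1}$, which is visibly in $[x]_\varphi$ with conjugating element $h s_i \in G$. This gives $\supseteq$, and combined with the first inclusion yields the stated equality. A small point worth recording explicitly is that $f_{x,i}$ and $f_{x,i}\circ\bar\varphi$ genuinely are automorphisms of $H$, which is where characteristicity of $H$ is used twice (once so that $\varphi$ restricts to $H$, once so that conjugation by $x_i$ preserves $H$).

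I do not expect a serious obstacle here; the content is bookkeeping with the coset representatives and keeping track of which automorphism of $H$ appears. The one place to be careful is the identity $x_i \, \varphi(h)^{-1} \, x_i^{-1} = \bigl((f_{x,i}\circ\bar\varphi)(h)\bigr)^{-1}$: one must check that $f_{x,i}$ is defined as conjugation by $x_i$ (not by $x_i^{-1}$) and that it is applied to $\bar\varphi(h)$, so that the composition order $f_{x,i}\circ\bar\varphi$ matches the statement. Everything else is a routine manipulation of the defining equation $z\,x\,\varphi(z)^{-1}$, and since the excerpt already follows \cite[Thm 5.2]{Felstyn_1}, I would simply cite that the same argument applies verbatim in the quantitative setting.
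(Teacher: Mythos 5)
Your proposal is correct and is essentially the paper's own argument: decompose the conjugating element as $h\,s_i$ with $h \in H$, rewrite $h\,s_i\,x\,\varphi(h\,s_i)^{-1} = h\,x_i\,\varphi(h)^{-1}$, insert $x_i^{-1}x_i$, and recognize $h\,(f_{x,i}\circ\bar\varphi)(h)^{-1}$ as an element of $[1_H]_{f_{x,i}\circ\bar\varphi}$. The paper simply presents both inclusions at once as a chain of set equalities, whereas you separate them into two directions; the content is identical.
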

\begin{proof}
	Let $\bar{\varphi}$ be the automorphism of $H$ induced by $\varphi$. We may write  
	\begin{align*}
	[x]_\varphi &= \{ y \:  x \: \varphi(y)^{-1} \: | \: y \in G  \} = \bigcup_{i=1}^{[G:H]} \{z \: s_i \: x \: \varphi(s_i)^{-1} \:\varphi(z)^{-1} \: | \: z \in H  \} \\
	&= \bigcup_{i=1}^{[G:H]}\{z \: x_i \: \varphi(z)^{-1} \: x_i^{-1} \: x_i \: | \: z \in H \} = \bigcup_{i=1}^{[G:H]} \{z \: f_{x,i}(\varphi(z))^{-1} \: x_i \: | \: z \in H \} \\
	&= \bigcup_{i=1}^{[G:H]} \{z \: f_{x,i}(\varphi(z))^{-1} \: | \: z \in H \} \cdot x_i = \bigcup_{i=1}^{[G:H]} [1_H]_{f_{x,i} \circ \bar{\varphi}} \cdot \: x_i.
	\end{align*}\end{proof}

Suppose that $H$ is a twisted conjugacy separable group and that $G$ is a finite extension. Additionally, assume that $\varphi \in \Aut(G)$ and $x \in G$. The following theorem relates the quantification of the $\varphi$-twisted conjugacy class of $x$ in $G$ with the quantification of $\psi_i$-twisted conjugacy separability of $H$ where $\psi_i$ are a finite fixed collection of automorphisms of $H$, depending both on $\varphi$ and $x$.

\begin{thm}\label{ext_twisted_conj}
	Suppose that $G$ is a finite generated group that contains a finite index characteristic subgroup $H$, and  let $\set{s_i}_{i=1}^{|G:H|}$ be a set of representatives for the right cosets of $H$. Fix an automorphism $\varphi \in \Aut(G)$ and a $x \in G$. Let $f_{x,i}$ be the automorphism of $H$ induced by conjugation by $x_i = s_i \: x \: \varphi(s_i)^{-1}$. If $S_G$ and $S_H$ are finite generating subsets of $G$ and $H$ respectively, then 
$$\Conj_{G,x,S_G}^\varphi(n) \preceq \prod_{i=1}^{[G:H]}\pr{\Conj_{H,1,S_H}^{f_{x,i} \circ \varphi}(n)}^{[G:H]}.$$\end{thm}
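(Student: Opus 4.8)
The plan is to combine the structural decomposition of Proposition \ref{conjugacy_union} with the three technical lemmas about the $\Farb$ function (Lemmas \ref{rf_extension}, \ref{rf_union}, \ref{rf_translation}) and then translate the resulting $\Farb$ bound back into a statement about $\Conj^\varphi_{G,x,S_G}$. First I would observe that, by definition, $\Conj_{G,x,S_G}^\varphi(n) = \Farb_{G,[x]_\varphi,S_G}(n)$, since separating $y$ from the twisted conjugacy class $[x]_\varphi$ in a finite quotient is exactly what the depth function $D_G([x]_\varphi,\cdot)$ measures, and $\Conj^\varphi_{G,x,S_G}$ is its maximum over the $n$-ball outside $[x]_\varphi$. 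So it suffices to bound $\Farb_{G,[x]_\varphi,S_G}(n)$.

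Next I would feed in Proposition \ref{conjugacy_union}, which writes $[x]_\varphi = \bigcup_{i=1}^{[G:H]} [1_H]_{f_{x,i}\circ\bar\varphi}\cdot x_i$. Applying Lemma \ref{rf_union} (which requires each piece to be a proper separable subset — this holds because $H$ is twisted conjugacy separable, so each $[1_H]_{f_{x,i}\circ\bar\varphi}$ is separable in $H$, and separability is preserved under right translation and under the inclusion $H\leq G$, the latter because $H$ is separable in $G$ as a finite-index subgroup) gives
$$
\Farb_{G,[x]_\varphi,S_G}(n) \preceq \prod_{i=1}^{[G:H]} \Farb_{G,\,[1_H]_{f_{x,i}\circ\bar\varphi}\cdot x_i,\,S_G}(n).
$$
Then Lemma \ref{rf_translation} removes the right translate by $x_i$ up to $\approx$, so each factor is $\approx \Farb_{G,\,[1_H]_{f_{x,i}\circ\bar\varphi},\,S_G}(n)$; and since $[1_H]_{f_{x,i}\circ\bar\varphi}\subseteq H$ with $H$ finite-index normal (characteristic, hence normal), Lemma \ref{rf_extension} bounds this by $\pr{\Farb_{H,\,[1_H]_{f_{x,i}\circ\bar\varphi},\,S_H}(n)}^{[G:H]} = \pr{\Conj_{H,1,S_H}^{f_{x,i}\circ\bar\varphi}(n)}^{[G:H]}$. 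Chaining these and absorbing the $\approx$'s into $\preceq$ gives the claimed product bound (with $\bar\varphi$ written as $\varphi$ in the statement, as the paper does).

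\textbf{Main obstacle.} The genuinely delicate point is matching the automorphism appearing in the decomposition, namely $f_{x,i}\circ\bar\varphi$ where $\bar\varphi$ is the restriction of $\varphi$ to $H$, with the automorphism $f_{x,i}\circ\varphi$ in the statement — this is just a notational identification (the restriction of $\varphi$ is what acts on $H$), but one should check it is stated cleanly. The more substantive care is in the bookkeeping of the $\preceq$ relation under a \emph{product} of $[G:H]$ many factors: one must verify that $\preceq$ is closed under finite products (it is, by taking the product of the constants and the max of the dilation constants) and that each application of Lemmas \ref{rf_union}, \ref{rf_translation}, \ref{rf_extension} is legitimate — in particular that every set in sight is separable so that the depth functions are finite, which is where the hypothesis that $H$ is twisted conjugacy separable and that $H$ is separable in $G$ gets used. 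The argument is otherwise a routine assembly of the preceding lemmas.
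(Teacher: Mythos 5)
Your proposal is correct and follows essentially the same route as the paper: identify $\Conj_{G,x,S_G}^\varphi$ with $\Farb_{G,[x]_\varphi,S_G}$, decompose $[x]_\varphi$ via Proposition \ref{conjugacy_union}, and then chain Lemmas \ref{rf_union}, \ref{rf_translation}, and \ref{rf_extension} exactly as the paper does (only the order of application differs, which is immaterial). Your added remarks on separability of each piece and on $\preceq$ being preserved under finite products are the same implicit bookkeeping the paper relies on.
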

\begin{proof}
	For simplicity in the following arguments, let $X_i = [1_H]_{f_{x,i} \circ \varphi} \cdot \: x_i$ and $X_i' = [1_H]_{f_{x,i} \circ \varphi} $. 
	
	Lemma \ref{rf_translation} implies that $\Farb_{G,X_i,S_G}(n) \approx \Farb_{G,X_i',S_G}(n)$, and since $X_i' \subseteq H$, Lemma \ref{rf_extension} implies that $\Farb_{G,X_i',S_G}(n) \preceq (\Farb_{H,X_i',S_H}(n))^{[G:H]}$. Since $X_i' = [1_H]_{f_{x_i} \circ \varphi}$, we have $\Farb_{H,X_i',S_H}(n) = \Conj_{H,1,S_H}^{f_{x,i} \circ \varphi}(n)$. Given that $[x]_\varphi = \cup_{i=1}^{[G:H]}X_i$, Lemma \ref{rf_union} implies that
	$$
	\Conj_{G,x,S_G}^\varphi(n) \preceq \prod_{i=1}^{[G:H]}\Farb_{H,X_i',S_H}(n).
	$$ Taking everything together, we have
	$$
	\Conj_{G,x,S_G}^\varphi(n) \preceq \prod_{i=1}^{[G:H]}\pr{\Conj_{H,1,S_H}^{f_{x,i} \circ \varphi}(n)}^{[G:H]}.$$ \end{proof}
\subsection{Effective Twisted Conjugacy Separability of Virtually Nilpotent Groups }
We now apply Theorem \ref{ext_twisted_conj} to the context of virtually nilpotent groups to get the first of the two main results of this section.
\begin{thm}\label{virtual_upper_bound}
	Suppose that $G$ is a virtually nilpotent group, and suppose $S$ is a finite generating subset of $G$. For $\varphi \in \Aut(G)$ and $x \in G$, there exists a natural numbers $k_1,k_2,k_3$ such that 
	$$
	\Conj_{G,x,S}^{\varphi}(n) \preceq  \pr{\|\varphi\|_S}^{k_1} \: \pr{\|x\|_S}^{k_2} 
	\: n^{k_3}.
	$$
	In particular, $\Conj_{G,S}^{\varphi}(n) \preceq \pr{\|\varphi\|_S}^{k_1} \: n^{k_2 + k_3}$ and $\Tconj_{G,S}(n) \preceq n^{k_1 + k_2 + k_3}$.
\end{thm}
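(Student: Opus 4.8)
The plan is to reduce the virtually nilpotent case to the already-established nilpotent case via Theorem \ref{ext_twisted_conj}. First I would produce a finite index characteristic subgroup $H \leq G$ which is a torsion free, finitely generated nilpotent group. This is standard: a virtually nilpotent group contains a finite index nilpotent subgroup, which contains a finite index characteristic subgroup obtained by intersecting over all subgroups of that index; after further passing to the subgroup generated by sufficiently divisible powers (or using the torsion free radical), one obtains a characteristic $\mathcal{F}$-subgroup $H$ of some finite index $r = [G:H]$. Fix a finite generating subset $S_H$ of $H$ and note $H$ is undistorted in $G$, so $\|\cdot\|_{S_H}$ and $\|\cdot\|_S$ are Lipschitz-equivalent on $H$.

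Next I would apply Theorem \ref{ext_twisted_conj} with this $H$: choosing coset representatives $\{s_i\}_{i=1}^{r}$ and setting $x_i = s_i\,x\,\varphi(s_i)^{-1}$ and $f_{x,i} = \Inn_H(x_i)$, we get
$$
\Conj_{G,x,S}^\varphi(n) \preceq \prod_{i=1}^{r}\pr{\Conj_{H,1,S_H}^{f_{x,i}\circ\varphi}(n)}^{r}.
$$
Here $\varphi$ restricts to $H$ since $H$ is characteristic. Now I would apply Theorem \ref{main_thm} to each factor: for the $\mathcal{F}$-group $H$, the automorphism $\psi_i = f_{x,i}\circ\varphi|_H$, and the fixed element $1_H$, there are $k_1',k_2',k_3'$ (depending only on $H$, not on $\psi_i$ or $i$) with
$$
\Conj_{H,1,S_H}^{\psi_i}(n) \preceq \pr{\|\psi_i\|_{S_H}}^{k_1'}\,\pr{\|1_H\|_{S_H}}^{k_2'}\,n^{k_3'} \preceq \pr{\|\psi_i\|_{S_H}}^{k_1'}\,n^{k_3'}.
$$

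It then remains to bound $\|\psi_i\|_{S_H}$ in terms of $\|\varphi\|_S$ and $\|x\|_S$. This is the step requiring the most care. For each $s \in S_H$ one has $\psi_i(s) = x_i\,\varphi(s)\,x_i^{-1}$, so $\|\psi_i(s)\|_{S_H} \leq C(\|x_i\|_{S_H} + \|\varphi(s)\|_{S_H} )$ using undistortion of $H$ and then $\|\varphi(s)\|_{S_H} \leq C'\|\varphi(s)\|_S \leq C'\|\varphi\|_S\|s\|_S \leq C''\|\varphi\|_S$ since $S_H$ is a fixed finite set. For $\|x_i\|_{S_H}$: we have $\|x_i\|_S \leq \|s_i\|_S + \|x\|_S + \|\varphi(s_i)\|_S \leq \|s_i\|_S + \|x\|_S + \|\varphi\|_S\|s_i\|_S$, and the $\|s_i\|_S$ are bounded by a constant since there are finitely many cosets; hence $\|x_i\|_S \leq C\,\|\varphi\|_S\,\|x\|_S$ (absorbing additive terms into the product, valid since all norms are $\geq 1$), and one more application of undistortion gives $\|x_i\|_{S_H} \leq C\,\|\varphi\|_S\,\|x\|_S$. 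Combining, $\|\psi_i\|_{S_H} \preceq \|\varphi\|_S\,\|x\|_S$, uniformly in $i$. Feeding this into the two displayed estimates and taking the product over the $r$ factors (and $r$-th powers) yields
$$
\Conj_{G,x,S}^\varphi(n) \preceq \pr{\|\varphi\|_S}^{r^2 k_1'}\,\pr{\|x\|_S}^{r^2 k_1'}\,n^{r^2 k_3'},
$$
which is the claimed bound with $k_1 = k_2 = r^2 k_1'$ and $k_3 = r^2 k_3'$. Finally, the ``in particular'' statements follow by specializing: when $\|x\|_S \leq n$ we absorb $\pr{\|x\|_S}^{k_2}$ into $n^{k_2}$, and when additionally $\|\varphi\|_S \leq n$ we absorb $\pr{\|\varphi\|_S}^{k_1}$ into $n^{k_1}$. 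The main obstacle is the bookkeeping in the norm bound for $\psi_i$ — making sure the coset-representative constants, the undistortion constants, and the automorphism norm all combine into a clean polynomial in $\|\varphi\|_S$ and $\|x\|_S$ with exponents independent of $\varphi$, $x$, and $n$.
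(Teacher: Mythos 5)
Your proof follows essentially the same route as the paper's: pass to a characteristic $\mathcal{F}$-subgroup $H$, apply Theorem \ref{ext_twisted_conj} to reduce $\Conj_{G,x,S}^{\varphi}$ to the functions $\Conj_{H,1,S_H}^{f_{x,i}\circ\varphi}$, invoke Theorem \ref{main_thm} with the element $1_H$, and then bound $\|f_{x,i}\circ\varphi\|_{S_H}$ polynomially in $\|\varphi\|_S$ and $\|x\|_S$, exactly as in the paper. The only slip is that $x_i = s_i\,x\,\varphi(s_i)^{-1}$ need not lie in $H$, so $\|x_i\|_{S_H}$ is not defined in general; the fix is to bound $\|\psi_i(s)\|_S \leq 2\|x_i\|_S + \|\varphi(s)\|_S$ inside $G$ first and only then apply undistortion of $H$ to $\psi_i(s)\in H$ (as the paper does), which gives the same estimate $\|\psi_i\|_{S_H}\preceq \|\varphi\|_S\,\|x\|_S$ and leaves the rest of your argument unchanged.
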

\begin{proof} 
	If $G$ is finite, then the theorem is clear. Thus, we may assume that $G$ is infinite. We can assume that $N$ is a characteristic $\mathcal{F}$-subgroup of $G$. Let $S'$ be a finite generating subset for $N$ and $\set{s_i}_{i=1}^{|G:N|}$ be a set of right coset representatives of $N$ in $G$. 

	Consider the automorphism $f_{x,i}$ of $N$ induced by conjugation by $s_i \: x \: \varphi(s_i)^{-1}$. Theorem \ref{ext_twisted_conj} implies
	$$\Conj_{G,x,S}^{\varphi}(n) \preceq \prod_{i=1}^{[G:N]}\pr{\Conj_{N,1,S'}^{f_{x,i} \circ \bar{\varphi}}(n)}^{[G:N]}.$$ Theorem \ref{main_thm} implies there exist natural numbers $k_1,k_2,k_3$ such that 
	$$
	\Conj_{H,1,S_H}^{f_{x,i} \circ \varphi}(n) \preceq \pr{\|f_{x,i} \circ \bar{\varphi}\|_{S'}}^{k_1} \: \pr{\|1\|_{S'}}^{k_2} \: n^{k_3} = \pr{\|f_{x,i} \circ \bar{\varphi}\|_{S'}}^{k_1} \:  \: n^{k_3}.
	$$
	Thus, to finish, we give a bound for $\|f_{x,i} \circ \bar{\varphi}\|_{S'}$.
	
Since the subgroup $N$ is undistorted, it suffices to find a bound on $\|f_{x,i} \circ \varphi \|_{S} \leq \|f_{x,i}\|_{S} \cdot \|\varphi\|_{S}$. Note that $f_{x,i}$ is conjugation by $s_i \: x \: \varphi(s_i)^{-1}$, and thus, 
$$\|f_{x,i}\|_{S} \leq 2 \Vert s_i \: x \: \varphi(s_i)^{-1} \Vert_S + 1 \leq 2( \Vert s_i \Vert_S)^2 \: \Vert x \Vert_S \: \Vert \varphi \Vert_S + 1 \leq C \Vert x \Vert_S \Vert \varphi \Vert_S$$ for some $C \in \N$ since there are only finitely many $s_i$. 

That implies $\|f_{x,i} \circ \bar{\varphi}\| \leq C \: \|x\|_S \: (\|\varphi\|_S)^2.$ Therefore,
	$$
	\Conj_{H,1,S_H}^{f_{x,i} \circ \varphi}(n) \preceq C^{k_1} \: \pr{\|x\|_S}^{k_1} \: \pr{\|\varphi\|_S}^{2 \: k_1} \: n^{k_3}.
$$
	By taking everything together, we may write
	$$
	\Conj_{G,x,S}^{\varphi}(n) \leq C^{k_1 \: [G:N]^2} \: \pr{\|x\|_S}^{k_1 \: |G:N|^2} \: \pr{\|\varphi\|_S}^{2 \: k_1 \: |G:N|^2} \: n^{k_3 \: |G:N|^2}.
$$ The last two inequalities follow immediately. 
\end{proof}

For this section's last result, we need the following proposition which is similar to \cite[Cor 10.5]{Pengitore_1}).
\begin{prop}\label{heisenb_conj_class}
	Let $\text{H}_3(\Z)$ be the $3$-dimensional integral Heisenberg group with presentation given by $\innp{x, \: y, \: z | \: [x,y] = z, z \text{ central}}$, and let $p$ be any prime. Suppose $\map{\pi}{H_3(\Z)}{Q}$ is a surjective group morphism where $Q$ is a $q$-group for some prime $q$ distinct from $p$, then $\pi(x^p) \sim_{\id} \pi(x^p \: z^m)$ where $m$ is any natural number. 
\end{prop}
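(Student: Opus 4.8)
The plan is to work directly with the structure of conjugacy classes in $H_3(\Z)$. For $g \in H_3(\Z)$, write $g = x^a y^b z^c$ in the standard coordinates. A direct computation shows that for any $w = x^r y^s z^t$ we have $w g w^{-1} = x^a y^b z^{c + (bs - ar)}$ wait — more precisely $[w,g] = z^{as - br}$, so the conjugacy class of $g$ is $\{x^a y^b z^{c + (as-br)} : r,s \in \Z\}$. Since we are looking at $g = x^p$, so $a = p$, $b = 0$, the conjugacy class of $x^p$ inside $H_3(\Z)$ is $\{x^p z^{ps} : s \in \Z\} = x^p \langle z^p \rangle$, and thus $x^p$ and $x^p z^m$ are conjugate in $H_3(\Z)$ if and only if $p \mid m$; in general they are not. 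So the content of the proposition is genuinely about what happens after passing to a finite $q$-quotient.

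First I would reduce to showing that $\pi(z)$ lies in the image of the commutator map restricted to the centralizer-type subgroup — concretely, that $\pi(x^p z^m) \sim_{\id} \pi(x^p)$ is equivalent to $\pi(z^m) \in \pi([H_3(\Z), x^p]) = \pi(\langle z^p \rangle)$, using the nilpotency-class-$2$ identity $[w, x^p z^m] = [w, x^p]$ (which holds because $z$ is central) together with Lemma \ref{nearly_twisted_conjugate_lemma} specialized to $\varphi = \id$ and $i = 2$. So it suffices to prove that $\pi(z^m) \in \langle \pi(z^p) \rangle$ in $Q$. Now the key point is arithmetic in the $q$-group $Q$: the element $\pi(z)$ has order a power of $q$, say $q^\ell$, and since $q \neq p$, the integer $p$ is invertible modulo $q^\ell$. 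Hence $\langle \pi(z^p) \rangle = \langle \pi(z) \rangle$, so certainly $\pi(z^m) \in \langle \pi(z^p) \rangle$, which gives the claim.

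The main obstacle — really the only thing requiring care — is making the reduction in the previous paragraph rigorous: one must check that $\pi(x^p) \sim_{\id} \pi(x^p z^m)$ in $Q$ is equivalent to $\pi(z^m)$ lying in the subgroup generated by $\pi([w,x^p])$ as $w$ ranges over $H_3(\Z)$, and that this latter subgroup is exactly $\pi(\langle z^p \rangle)$. The forward direction is immediate from the commutator computation; the subtlety is that in $Q$ one a priori only knows $\pi(x^p z^m) = \pi(w)\,\pi(x^p)\,\pi(w)^{-1}$ for some $w$ that need not be the image of a word whose centralizer behavior we control, but since $\pi$ is surjective every element of $Q$ is $\pi(w)$ for some $w \in H_3(\Z)$, and then $\pi(w x^p w^{-1}) = \pi(x^p)\pi([w^{-1},x^p]^{-1})$ — wait, rearranging, $\pi(w x^p w^{-1} x^{-p}) = \pi([w,x^p]) = \pi(z)^{ps}$ for the appropriate $s$ depending on $w$. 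So the condition becomes $\pi(z)^m = \pi(z)^{ps}$ for some $s \in \Z$, i.e. $m \equiv ps \pmod{q^\ell}$ is solvable in $s$, which holds since $\gcd(p, q^\ell) = 1$. I would present this last congruence computation explicitly and conclude. Finally I would remark that this shows the finite quotients available to separate $x^p z^m$ from the conjugacy class of $x^p$ are exactly those whose order is divisible by $p$, which motivates the $p$-power bound appearing in the Heisenberg example later in the paper.
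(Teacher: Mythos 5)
Your argument is correct and is essentially the paper's own proof: both rest on the description of the conjugacy class $[x^p] = \{x^p z^{tp} : t \in \Z\}$ in $H_3(\Z)$ together with the invertibility of $p$ modulo the order $q^k$ of $\pi(z)$ (the paper writes this via B\'ezout, $ap + bq^k = 1$, while you phrase it as solvability of the congruence $ps \equiv m \pmod{q^k}$, which is the same arithmetic fact). The only cosmetic difference is your extra ``equivalence'' framing via the commutator map; the concluding aside about which quotients can separate is not needed and slightly overstated, but it does not affect the proof.
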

\begin{proof}
Write the conjugacy class $[x^p] = \set{x^p \: z^{t p} \: | \: t \in \Z}$. Let $q^k$ be the order of the element $\pi(z)$ in $Q$. Since $\GCD(p,q^k) = 1$, there exists integers $a,b$ such that $a \: p + b \: q^k = 1$. 
We see that $$\pi(x^p) \sim_{\id} \pi(x^p z^{m \:a \: p}) = \pi(x^p z^{m \: a \: p + m \: b \: q^k}) = \pi(x^p z^m).$$\end{proof}

We reproduce the proof of \cite[Prop 13.1]{Pengitore_1}.
\begin{prop}\label{conj_finite_ext}
	Let $G$ and $H$ be conjugacy separable, finitely generated groups, and suppose that $H$ is a subgroup of $G$. If $x,y \in H$ are two non-conjugate elements of $G$, then $D_{N}([x]_{\text{id}},y) \leq D_{G}([x]_{\text{id}},y)$.
\end{prop}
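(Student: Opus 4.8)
The plan is to take a finite quotient of $G$ that witnesses $D_G([x]_{\id},y)$ and restrict it to $H$, checking that it remains a separating quotient for the $H$-conjugacy class of $x$ (here, and below, $D_H$ denotes the depth function of $H$, i.e.\ the ``$N$'' in the statement should read $H$).

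First I would record the easy consequences of the hypotheses. Since conjugacy in $H$ implies conjugacy in $G$, the assumption that $x$ and $y$ are not conjugate in $G$ forces that $y$ is not conjugate to $x$ in $H$ either; in particular $y\notin[x]_{\id}$ as an element of $H$, so $D_H([x]_{\id},y)$ is meaningful, and moreover the $H$-conjugacy class satisfies $[x]_{\id}^H\subseteq[x]_{\id}^G$. If $D_G([x]_{\id},y)=\infty$ the inequality is trivial, so I may assume it equals a natural number (which is guaranteed in any case by conjugacy separability of $G$), and then fix a surjection $\map{\pi}{G}{Q}$ onto a finite group with $|Q|=D_G([x]_{\id},y)$ and $\pi(y)\notin\pi([x]_{\id}^G)$; such a $\pi$ exists because the minimum defining $D_G$ is taken over a nonempty set of natural numbers.

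Next I would restrict: set $\map{\bar\pi}{H}{\pi(H)}$ equal to $\pi|_H$, a surjection onto the finite group $\pi(H)\le Q$. Using $[x]_{\id}^H\subseteq[x]_{\id}^G$ we get $\bar\pi\big([x]_{\id}^H\big)=\pi\big([x]_{\id}^H\big)\subseteq\pi\big([x]_{\id}^G\big)$, and since $\bar\pi(y)=\pi(y)\notin\pi\big([x]_{\id}^G\big)$ it follows that $\bar\pi(y)\notin\bar\pi\big([x]_{\id}^H\big)$; that is, the finite quotient $\pi(H)$ of $H$ separates $y$ from the $H$-conjugacy class of $x$. Therefore $D_H([x]_{\id},y)\le|\pi(H)|\le|Q|=D_G([x]_{\id},y)$, which is the claim.

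I do not expect any real obstacle here. The only point requiring care is the elementary observation that a morphism separating $y$ from the larger class $[x]_{\id}^G$ restricts to a morphism separating $y$ from the smaller class $[x]_{\id}^H$, together with the trivial bound $|\pi(H)|\le|Q|$ on the order of the restricted quotient. The conjugacy separability hypotheses on $G$ and $H$ serve only to ensure that the depth functions involved are finite; the displayed inequality itself holds unconditionally.
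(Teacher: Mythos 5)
Your argument is correct and is essentially identical to the paper's proof: both take a minimal finite quotient $\pi\colon G\to Q$ witnessing $D_G([x]_{\id},y)$, restrict $\pi$ to $H$, observe that the $H$-conjugacy class of $x$ sits inside the $G$-conjugacy class so the restriction still separates, and bound $|\pi(H)|\le|Q|$. Your added remarks (reading the ``$N$'' in the statement as $H$, and noting separability is only needed for finiteness) are fine and change nothing substantive.
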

\begin{proof}
Take $\pi: G \to Q$ be surjective morphism with $|Q| = D_{G}([x]_{\id},y)$ and $\pi(y) \notin \pi([x]_{\id})$. The restriction of $\pi$ to the subgroup $H$ separates the conjugacy classes of $x$ and $y$. Moreover, $\vert \pi(H) \vert \leq \vert Q \vert = D_{G}([x]_{\id},y)$.
\end{proof}

We now have the following theorem which gives the asymptotic behavior for the conjugacy separability quantification function for the class of virtually nilpotent groups.
\begin{thm}\label{last_main_result}
	Let $G$ be a virtually nilpotent group with a finite generating subset $S$, and let $x \in G$. There exist $k_1, k_2 \in \N$ such that $\Conj_{G,x,S}(n) \preceq \pr{\|x\|_S}^{k_1} n^{k_2}$. In particular, $\Conj_{G,S}(n) \preceq n^{k_1 + k_2}$. If $G$ is not virtually abelian, then
	$$
	n^{ \left(c(N)- 1 \right) \left(c(N)+1 \right)} \preceq \Conj_{G,S}(n) \preceq n^{k_1 + k_2}
	$$
	where $N$ is any infinite finitely generated nilpotent subgroup of finite index in $G$.
\end{thm}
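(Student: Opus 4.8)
\textbf{Proof plan for Theorem \ref{last_main_result}.}

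The plan is to split the statement into its upper bound and its lower bound, treating them independently. For the upper bound, I would simply specialize Theorem \ref{virtual_upper_bound} to the case $\varphi = \id$. Indeed, applying that theorem with the identity automorphism gives natural numbers $k_1, k_2, k_3$ with $\Conj_{G,x,S}^{\id}(n) \preceq (\|\id\|_S)^{k_1} (\|x\|_S)^{k_2} n^{k_3}$, and since $\|\id\|_S$ is a fixed constant (at most $\max_{s \in S}\|s\|_S$, which is bounded because the generators have length one, so in fact $\|\id\|_S = 1$), the first factor is absorbed into the implied constant. Observing that $\Conj_{G,x,S} = \Conj_{G,x,S}^{\id}$ by the remark following the definition of $\Conj_{G,S}^\varphi$, we obtain $\Conj_{G,x,S}(n) \preceq (\|x\|_S)^{k_1} n^{k_2}$ after relabelling $k_2 + k_3$ as $k_2$. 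Maximizing over $\|x\|_S \leq n$ then yields $\Conj_{G,S}(n) \preceq n^{k_1 + k_2}$. This part is essentially bookkeeping.

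The substantive part is the lower bound $n^{(c(N)-1)(c(N)+1)} \preceq \Conj_{G,S}(n)$ when $G$ is not virtually abelian, where $N \leq G$ is an infinite finitely generated nilpotent subgroup of finite index. The approach, following \cite[Thm 1.8]{Pengitore_1}, is to reduce the problem from $G$ to $N$. First I would note that we may assume $N$ is torsion free (pass to $N/T(N)$, or rather to a torsion-free finite-index subgroup; since $G$ is not virtually abelian, $c(N) \geq 2$ for any such $N$, and $c(N)$ is an invariant of the commensurability class up to the relevant range). The key structural fact is that for $x, y \in N$, a conjugacy class $[x]_{\id}^G$ in $G$ meets $N$ in a union of at most $[G:N]$ conjugacy classes of $N$; dually, if a finite quotient $\pi\colon G \to Q$ separates $[x]^G$ from $y$, then $\pi|_N$ separates the $N$-conjugacy class of $y$ from $x$ inside $N$ whenever $x \not\sim_G y$, and $|\pi(N)| \leq |Q|$ — this is exactly Proposition \ref{conj_finite_ext}. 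So if I can exhibit, for each $n$, elements $x_n, y_n \in N$ with $\|x_n\|_S, \|y_n\|_S \preceq n$ that are not conjugate in $G$ but require a finite quotient of $N$ of order $\succeq n^{(c(N)-1)(c(N)+1)}$ to be separated as $N$-conjugacy classes, then the same growth rate is forced on $\Conj_{G,S}$.

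To produce such elements I would invoke the lower bound construction of \cite[Thm 1.8]{Pengitore_1}, which gives, for a torsion-free finitely generated nilpotent group $N$ with $c(N) \geq 2$, a sequence of pairs of non-$N$-conjugate elements in the $n$-ball whose conjugacy classes need quotients of order $\succeq n^{(c(N)-1)(c(N)+1)}$ to separate. The one extra check is that these pairs can be chosen to remain non-conjugate in $G$, not merely in $N$; the construction there uses elements $x$ and $x z^m$ with $z$ a primitive element of $\gamma_{c(N)}(N)$ and distinguishes them via the centralizer structure, and since $[G:N] < \infty$ the $G$-conjugacy class of $x$ is still a controlled finite union of such $N$-classes, so for suitable choices of $m$ (avoiding the finitely many translates coming from the coset representatives) one still has $x \not\sim_G x z^m$. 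The main obstacle I anticipate is precisely this passage from $N$ to $G$ in the lower bound: one must verify that the depth of separation does not drop when one allows finite quotients of the larger group $G$, i.e.\ that Proposition \ref{conj_finite_ext} genuinely transfers the hardness, and that $c(N)$ is the same for every infinite finite-index nilpotent subgroup (which follows since any two such subgroups are commensurable and nilpotency class is a commensurability invariant for infinite finitely generated nilpotent groups up to the torsion, handled by passing to torsion-free finite-index subgroups). Modulo citing \cite{Pengitore_1} for the bare lower bound on $N$ itself, the rest is routine.
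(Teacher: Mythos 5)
Your upper bound argument and your reduction from $G$ to $N$ are exactly the paper's: specialize Theorem \ref{virtual_upper_bound} to $\varphi = \id$, use the fact that a $G$-conjugacy class of an element of $N$ meets $N$ in at most $[G:N]$ conjugacy classes of $N$, and transfer hardness via Proposition \ref{conj_finite_ext} (your pigeonhole over "suitable $m$" is, in the paper, the choice of $[G:N]+1$ pairwise non-$N$-conjugate elements $\alpha_{t,i} = x^{p_t} z^{ik}$, two of which must then be non-conjugate in $G$).

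The genuine gap is in the core of the lower bound, which you outsource to \cite[Thm 1.8]{Pengitore_1}: this very paper states (in the introduction and again in the final section) that the lower-bound argument of that reference contains a gap and that "at the moment there is no full proof for the lower bound," so it cannot be cited as a black box, and your description of its mechanism ("distinguishes them via the centralizer structure") is not the argument that actually works here. The paper therefore reproves, from scratch, the weaker bound $n^{(c(N)-1)(c(N)+1)}$ that Theorem \ref{last_main_result} needs: choose $x \in \gamma_{c(N)-1}(N)$, $y \in N$, and a primitive central $z$ with $[x,y] = z^k$, take primes $p_t > \max\{k,[G:N]\}$ and the elements $x^{p_t} z^{ik}$; their word lengths are $\approx p_t^{1/(c(N)-1)}$ by Gromov's distortion estimate; and a quotient $\pi\colon N \to Q$ with $|Q| < p_t^{c(N)+1}$ cannot separate the chosen pair because, after reducing to a $q$-group, either $\pi(z^k)=1$ (the images coincide up to conjugacy), or $q = p_t$ and $\pi(z^k)\neq 1$ forces $c(Q) = c(N)$ and hence $|Q| \geq p_t^{c(N)+1}$, or $q \neq p_t$ and Proposition \ref{heisenb_conj_class}, applied to the embedded Heisenberg group $\langle x,y,z^k\rangle$, shows $\pi(x^{p_t}) \sim_{\id} \pi(x^{p_t} z^{ik})$. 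Without supplying this argument (or verifying that the gap in \cite{Pengitore_1} does not affect the exponent $(c(N)-1)(c(N)+1)$), your proposal does not establish the lower bound. A secondary caution: your claim that $c(N)$ is the same for every infinite finite-index nilpotent subgroup is only correct after passing to torsion-free groups (the paper works with a characteristic $\mathcal{F}$-subgroup), so that reduction needs to be stated carefully rather than waved through.
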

\begin{proof}
	Note that if $G$ is finite, then the upper bound clearly holds. Therefore, we may assume that $G$ contains a $\mathcal{F}$-group $N$ as a characteristic finite index subgroup. Thus, Theorem \ref{virtual_upper_bound} implies that there exist integers $k_1,k_2$ such that $
	\Conj_{G,x,S}^{\id}(n) \preceq \pr{\|x\|_S}^{k_1} n^{k_2}.$ Since $\Conj_{G,x,S}^{\id}(n) = \Conj_{G,x,S}(n)$ and $\Conj_{G,S}^{\id}(n) = \Conj_{G,S}(n)$, the first two statements are evident.
	
	For the lower bound, we use similar ideas as in \cite[Thm 1.8]{Pengitore_1}. Assuming that $G$ is not virtually abelian, there exists a $\mathcal{F}$-group $N$ that is a finite index characteristic subgroup of $G$ where $c(N) \geq 2$. We construct an infinite sequence of non-conjugate elements $a_t,b_t$ in $N$ that are also not conjugate in $G$ such that $\|a_t\|_{S'}, \|b_t\|_{S'} \approx n_t$ and  $n_t^{\left(c(N) - 1 \right)  \left( c(N) + 1 \right)} \preceq D_{N}([a_t]_{\text{id}},b_t)$ where $S'$ is a finite generating subset of $N$. Proposition \ref{conj_finite_ext} implies that $D_N([a_t]_{\text{id}},b_t) \leq D_G([a_t]_{\text{id}},b_t)$. Since $N$ is a finite index subgroup of $G$, it follows that $N$ is undistorted in $G$. In particular, $\|a_t\|_S, \|b_t\|_S \approx n_t$. At that point, we have the last statement of the theorem.
	
Take elements $x \in \gamma_{c\pr{N}-1}(N), y \in N$ and $z \in Z(N)$ such that $[x,y] = z^{k}$ and $z$ is primitive for some $k \in \N$. In particular $z^{k} \in \ga_{c(N)}(N)$. For every surjective morphism $\map{\pi}{N}{Q}$ to a finite $q$-group with $q > k$ and $\pi(z) \neq 1$, we get that $c(Q) = c(N)$. Indeed, if $c(Q) < c(N)$, then we must have that $\varphi(z^{k}) = 1$, thus leading to $\text{Ord}_Q(\varphi(z)) \mid k$. Since $\gcd(k,\text{Ord}_Q(\varphi(z))) = 1$ by assumption, this leads to a contradiction. In particular we get that $|Q| \geq q^{c(N) + 1}$ in this case.
We observe that the subgroup $\innp{x,y,z^k} \leq G$ is isomorphic to the $3$-dimensional integral Heisenberg group. 
	
	Letting $\{p_t\}_{t=1}^{\infty}$ be an enumeration of primes greater than $\text{max}\{k,[G:N]\}$, we consider the elements $\al_{t,i} = x^{p_t} \: z^{i \: k}$ for $1 \leq i \leq [G:N] + 1$. Since $\al_{t,i}$ are pairwise non-equal central elements of $\faktor{N}{H \cdot N^{p_t}}$, they are pairwise  non-conjugate as elements of $N$.	We claim that there exist $i_0$ such that $\al_{t,1} \nsim_{\text{id}} \al_{t,i_0}$ as elements of $G$. Letting $\set{s_i}_{i=1}^{|G:N|}$ be a set of right coset representatives of $N$ in $G$, we may write the conjugacy class of any element $x \in N$ as $$ [x]_{\id} = \{g x g^{-1} \mid g \in G \} = \bigcup_{i=1}^{[G:N]} \{ n \: s_i \: x \: s_i^{-1 \:} n^{-1} \mid n \in N \} = \bigcup_{i=1}^{[G:N]} \{ n \: x_i \: n^{-1} \mid n \in N \},$$ so as the union of $[G:N]$ conjugacy classes of elements $x_i = s_i \: x \: s_i^{-1}$ in $N$. Since the $[G:N] + 1$ elements $\al_{t,i}$ all lie in different conjugacy classes of $N$, the claim follows. Now take $a_t = \al_{t,1}$ and $b_t = \al_{t,i_0}$.
	
 From \cite[Lem 3.B]{Gromov}, it follows that $\|a_t\|_S,\|b_t\|_S \approx p_t^{1 / (c(N) - 1)}$. We claim that $D_{N}([a_t]_{\text{id}},b_t) \geq p_t^{c(N)+1}$.
	So it suffices to show that for all surjective group morphisms $\map{\pi}{N}{Q}$ where $|Q| < p_t^{c(N)+1}$ that $\pi(a_t) \sim_{\id} \pi(b_t)$. If $\pi(z^k) = 1$, then $\pi(a_t) \sim_{\id} \pi(b_t)$. Thus, we may assume that $\pi(z^k) \neq 1$. By \cite[Thm 2.7]{Hall_notes}, we may assume that $Q$ is a finite $q$-group where $q$ is a prime. 
	From our assumptions on $p_j$ and $\pi$, we know that if $q = p_j$, then $\vert Q \vert \geq  p_t^{c(N)+1}$.
	Hence, we can also assume that $q \neq p_j$. Since $\innp{x,y,z^k}$ is isomorphic to the $3$-dimensional integral Heisenberg group, Proposition \ref{heisenb_conj_class} implies that there exists $g \in \innp{x,y,z^k}$ such that $\pi(g \: a_t \: g^{-1}) = \pi(b_t)$. Therefore, $\pi(a_t) \sim_{\id} \pi(b_t)$ and as explained above this argument ends the proof.	
\end{proof}
\section{Some Examples} 
\label{sec:examples}
In this last section, we work some explicit examples.

\subsection{Heisenberg group}

In this section, we work out the twisted conjugacy seperability function for the discrete Heisenberg group
$H_3 (\mathbb{Z}) = \{ (x,y,z) \mid x,y,z \in \mathbb{Z} \}$ with group law given by $$(x_1,y_1,z_1)(x_2,y_2,z_2) = (x_1+ x_2, y_1+ y_2, z_1 + z_2 + x_1 y_2).$$ Fix the generating subset $S = \{(1,0,0), (0,1,0)\}$.

Any automorphism $\map{\varphi}{H_3(\Z)}{H_3(\Z)}$ can be written as
$$\varphi(x,y,z) = (ax + by, cx + dy, ex + fy + Dz)
$$
where $D = ad - bc = \pm 1$. We let 
$$
A = \begin{bmatrix}
a & b \\
c & d
\end{bmatrix} \in \GL(2,\Z).
$$

Note that if $D = -1$, then $(0,0,2) \in (H_3(\Z))_{\varphi_X}$ for every $X \in H_3(\Z)$. So in this case, $\Vert (H_3(\Z))_{\varphi_X} \Vert_S$ is uniformly bounded over all $X$.

We compute the function $\Conj_{H_3(\Z),S}^\varphi(n)$ for a general automorphism $\varphi$. Let $X = (x_1,y_1,z_1)$ and $Y =(x_2,y_2,z_2)$ be two elements in $H_3(\Z)$ with $\Vert X \Vert_S, \Vert Y \Vert_S \leq n$ such that $X \nsim_\varphi Y$. If $\pi_{\gamma_2(H_3(\Z))}(X) \nsim_\varphi \pi_{\gamma_2(H_3(\Z))}(Y)$, then Proposition \ref{twisted_abelian_separability} (with the automorphism $\varphi$ fixed here) implies that we can seperate the twisted conjugacy classes in a finite quotient of norm $\preceq \log(n)$. So we will always assume that $\pi_{\gamma_2(H_3(\Z))}(X) \sim_\varphi \pi_{\gamma_2(H_3(\Z))}(Y)$. In this way, it also follows that $\log(n) \preceq \Conj_{H_3(\Z),S}^\varphi(n)$.

Note that  $(H_3(\Z))_2^\varphi = (H_3(\Z))_2^{\varphi_X}$ for all $X \in N$.  The group $(H_3(\Z))_2^\varphi$ can have rank $1, 2$ or $3$, corresponding to dimension $0, 1$ or $2$ for the eigenspace of $A$ corresponding to eigenvalue $1$.

\textbf{Case 1: $\rank((H_3(\Z))_2^\varphi) = 1$}

In this case, $(H_3(\Z))_2^\varphi = \gamma_2(H_3(\Z))$, and thus, $N_{\varphi_X} = 1$ That implies that $x_1 = x_2$ and $y_1 = y_2$. If $D = -1$, we get that $\Conj_{H_3(\Z),S}^\varphi(n)$ is bounded (since there are only a finite number of twisted conjugacy classes). If $D = 1$,  then $\Conj_{H_3(\Z),S}^\varphi(n) \preceq (\log(n))^3$ since it corresponds to separating central elements in the group $H_3(\Z)$. By using the elements $(0,0,1)$ and $(0,0,p)$ for increasing primes, it is easy to show that indeed $\Conj_{H_3(\Z),S}^\varphi(n) \approx (\log(n))^3$.

\textbf{Case 2: $\rank(N_2^\varphi) = 2$}

Note that the second eigenvalue of $A$ must be $-1$ and thus $D = -1$. The norm $\Vert (H_3(\Z))_{\varphi_X} \Vert$ is uniformly bounded and hence there is a fixed quotient in which we can seperate the twisted conjugacy classes of $X$ and $Y$. We conclude that in this case  $\Conj_{H_3(\Z),S}^\varphi(n) \approx \log(n)$.

\textbf{Case 3: $\rank((H_3(\Z))_2^\varphi) = 3$}

In this case, $A = I_2$ and $(H_3(\Z))_2^\varphi = H_3(\Z)$. In this case, we can copy the proof of \cite[Thm 1.6]{Pengitore_1} to find that $\Conj_{H_3(\Z),S}^\varphi(n) \approx \Conj_{H_3(\Z),S}^{\text{id}}(n) \approx n^3$. 

Note that in all three cases, we have $\Conj_{H_3(\Z),S}^\varphi(n) \preceq n^3 \approx \Conj_{H_3(\Z),S}(n)$. 
\subsection{A $5$-dimensional Example}
Let $N$ be the nilpotent group given by 
$$
N = \innp{a_1,a_2,a_3,b_1,b_2 \: | \: [a_1,a_2] = b_1, [a_2,a_3] = b_2, \text{ and } b_1,b_2 \text{ central }}
$$
with generating set $S = \{a_1, a_2, a_3, b_1, b_2\}$. Let $\map{\varphi}{N}{N}$ be the automorphism given by 
$$
\varphi : \begin{cases}
a_1 \mapsto a_1 \\
a_2 \mapsto a_2 \\
a_3 \mapsto a_1 \: a_3 \\
b_1 \mapsto b_1 \\
b_2 \mapsto b_1 \: b_2.
\end{cases}
$$ 
Every element $x \in N$ can be uniquely expressed as 
$$
x = a_1^{\al_1} a_2^{\al_2} a_3^{\al_3} b_1^{\beta_1} \: b_2^{\beta_2}
$$
for some $\alpha_i, \beta_j \in \Z$. 

Note that the subgroup $N_2^{\varphi_x} = N_2^\varphi$ for every $x \in N$. It follows immediately that $$N_2^{\varphi_x}  = \innp{ a_1,a_2,b_1,b_2};$$ hence, $\|N_2^{\varphi_x}\|_S = 1$. These facts allow us to prove the following proposition.
\begin{prop}
There exists a constant $C \in \N$ such that for every $x\in N$ with $\|x\|_S \leq n$, it holds that $\|\im \left( \psi_{\varphi_x,2}) \right)\|_S \leq C \: \sqrt{n}$.
\end{prop}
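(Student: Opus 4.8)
The plan is to follow the proof of Proposition~\ref{important_morphism_bound} closely, exploiting the two facts recorded just above, namely that $N_2^{\varphi_x} = \langle a_1,a_2,b_1,b_2\rangle$ independently of $x$ and that $\|N_2^{\varphi_x}\|_S = 1$. Since $\psi_{\varphi_x,2}$ is a group morphism (see Section~\ref{sec:twisted}) and $a_1,a_2,b_1,b_2$ generate its domain, the subgroup $\im(\psi_{\varphi_x,2})$ is generated by the four elements $\psi_{\varphi_x,2}(a_1),\psi_{\varphi_x,2}(a_2),\psi_{\varphi_x,2}(b_1),\psi_{\varphi_x,2}(b_2)$, and the first step is to compute these explicitly. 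Using $\varphi_x(g) = x\,\varphi(g)\,x^{-1}$ together with $\varphi(a_1)=a_1$, $\varphi(a_2)=a_2$, $\varphi(b_1)=b_1$, and $\varphi(b_2)=b_1 b_2$, one finds $\psi_{\varphi_x,2}(a_i) = a_i\,x\,a_i^{-1}\,x^{-1} = [a_i,x]$ for $i=1,2$, while $\psi_{\varphi_x,2}(b_1) = 1$ (as $b_1$ is central and $\varphi$-fixed) and $\psi_{\varphi_x,2}(b_2) = b_2\,(x\,b_1 b_2\,x^{-1})^{-1} = b_2 (b_1 b_2)^{-1} = b_1^{-1}$.

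Next I would write a general element in its normal form $x = a_1^{\alpha_1}a_2^{\alpha_2}a_3^{\alpha_3}b_1^{\beta_1}b_2^{\beta_2}$ and evaluate $[a_1,x]$ and $[a_2,x]$. Because $N$ has nilpotency class $2$, each map $g \mapsto [a_i,g]$ is a homomorphism $N \to \gamma_2(N)$ that is trivial on $\gamma_2(N)$, so it is determined by its values on $a_1,a_2,a_3$; the defining relations (in particular $[a_1,a_2]=b_1$, $[a_2,a_3]=b_2$ and $[a_1,a_3]=1$) then give $[a_1,x] = b_1^{\alpha_2}$ and $[a_2,x] = b_1^{-\alpha_1}b_2^{\alpha_3}$. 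Consequently
$$
\im(\psi_{\varphi_x,2}) = \langle b_1^{\alpha_2},\, b_1^{-\alpha_1}b_2^{\alpha_3},\, b_1^{-1}\rangle = \langle b_1,\, b_2^{\alpha_3}\rangle,
$$
the last equality holding because $b_1$ already lies in the subgroup and may therefore be used to clear the $b_1$-parts of the remaining two generators.

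It then remains to bound $\|\langle b_1,b_2^{\alpha_3}\rangle\|_S$. Taking the generating set $\{b_1, b_2^{\alpha_3}\}$ it suffices to estimate $\|b_2^{\alpha_3}\|_S$, since $\|b_1\|_S = 1$. Projecting $x$ to $N/\gamma_2(N) \cong \Z^3$, a map that does not increase word length, shows $|\alpha_3| \le \|x\|_S \le n$; and the quadratic distortion estimate of \cite{Osin}, applied to $\gamma_2(N)$ with the generating set $\{b_1,b_2\}$ exactly as in the proof of Proposition~\ref{important_morphism_bound}, yields $\|b_2^{\alpha_3}\|_S \le C_0\sqrt{|\alpha_3|} \le C_0\sqrt{n}$ for a constant $C_0$ independent of $x$. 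Hence $\|\im(\psi_{\varphi_x,2})\|_S \le \max\{1, C_0\sqrt{n}\} \le C\sqrt{n}$, as claimed. The computation is entirely elementary, so there is no genuine obstacle; the one point worth flagging is that $\im(\psi_{\varphi_x,2})$ always contains $b_1$ — a consequence of $\varphi$ sending $b_2$ to $b_1 b_2$ — so that the square-root saving is needed only for the $b_2$-component, precisely as in the inner-automorphism case.
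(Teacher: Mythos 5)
Your argument is correct and follows essentially the same route as the paper: compute the images of the generators $a_1,a_2,b_1,b_2$ of $N_2^{\varphi_x}$ under $\psi_{\varphi_x,2}$ (obtaining $[a_1,x]$, $[a_2,x]$, $1$, $b_1^{-1}$) and then invoke the square-root distortion of $\gamma_2(N)$ exactly as in Proposition \ref{important_morphism_bound}. Your additional normal-form computation identifying the image as $\langle b_1, b_2^{\alpha_3}\rangle$ is a harmless (and correct) refinement of the same argument.
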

\begin{proof}
	The subgroup $\im \left( \psi_{\varphi_x,2}) \right)$ is generated by the elements $a_1 \: x \: \varphi(a_1)^{-1} \: x^{-1}$, $a_2 \: x \: \varphi(a_2)^{-1} \: x^{-1}$, $b_1 \: x \: \varphi(b_1)^{-1} x^{-1} = 1$ and $b_2 \: x \: \varphi(b_2)^{-1} x^{-1} = b_1^{-1}$. The statement now follows similarly as in Proposition \ref{important_morphism_bound}.
\end{proof}

We now compute the asymptotic upper bound for $\Conj_{N,S}^\varphi(n)$.
\begin{prop}
For the group $N$ and $\varphi:N \to N$ as above, it holds $$\Conj_{N,S}^\varphi(n) \preceq n^{3}.$$
\end{prop}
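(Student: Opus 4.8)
The plan is to mimic the proof of Theorem \ref{two_step_conjugacy_precise}, using the structure we have just established for this particular $N$ and $\varphi$. Let $x, y \in N$ with $\|x\|_S, \|y\|_S \leq n$ and $x \nsim_\varphi y$. First I would pass to the quotient $\faktor{N}{\gamma_2(N)}$, which is abelian of rank $3$: if $\pi_{\gamma_2(N)}(x) \nsim_{\bar\varphi} \pi_{\gamma_2(N)}(y)$, then Proposition \ref{twisted_abelian_separability} separates the twisted conjugacy classes in a finite quotient of size $\preceq \log(n) \leq n^3$, and we are done. Hence we may assume $\pi_{\gamma_2(N)}(x) \sim_{\bar\varphi} \pi_{\gamma_2(N)}(y)$, and after replacing $y$ by a twisted conjugate (whose norm stays polynomially bounded in $n$, by Lemma \ref{norm} applied to $\faktor{N}{\gamma_2(N)}$), we may write $y = x \: z$ with $z \in \gamma_2(N) = X_1^{\varphi_x}\gamma_2(N)$, and $\|z\|_S \preceq n^{k}$ for some fixed $k$.

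Next I would invoke Lemma \ref{nearly_twisted_conjugate_lemma}: since $z \in \gamma_2(N) = \gamma_c(N)$ here ($c(N) = 2$), we have $y x^{-1} = z \sim_{\varphi_x}$-trivially iff $z \in X_2^{\varphi_x} = \im(\psi_{\varphi_x,2}) = N_{\varphi_x} = N_x$. So $x \nsim_\varphi y$ forces $z \notin N_x$. By the proposition just proved, $\|N_x\|_S = \|\im(\psi_{\varphi_x,2})\|_S \leq C\sqrt{n}$ (using also $\|N_2^{\varphi_x}\|_S = 1$). Then Proposition \ref{centralsubgroup}, applied to the central subgroup $N_x \leq Z(N)$ and the element $z$, yields a finite quotient $\map{\pi}{N}{Q}$ with $\pi(z) \notin \pi(N_x)$ and $|Q| \preceq \max\{\|N_x\|_S, \log\|z\|_S\}^{c(N)\,\Phi(N)} \preceq (\sqrt{n})^{2\Phi(N)} = n^{\Phi(N)}$. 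One then checks, exactly as in the proof of Theorem \ref{two_step_conjugacy_precise}, that $\pi(y) \nsim_\varphi \pi(x\,z)$: if $\pi(a\,x\,\varphi(a)^{-1}) = \pi(x\,z)$ for some $a$, then $\pi(a\,x\,\varphi(a)^{-1}x^{-1}) = \pi(z)$, i.e. $\pi(z) \in \pi(\im(\psi_{\varphi_x,2}))$, a contradiction.

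Finally I would compute $\Phi(N)$ for this specific group. The center is $Z(N) = \innp{b_1, b_2} \cong \Z^2$; a primitive central element $z = b_1^{r} b_2^{s}$ with $\gcd(r,s) = 1$ needs a one-dimensional central quotient $\faktor{N}{H}$ whose center is $\innp{\pi_H(z)}$. Quotienting by a complementary primitive central element gives a quotient that is a $3$-dimensional Heisenberg-type group of Hirsch length $4$ when $z$ is not a multiple of $b_2$ (so that the relevant commutator survives), but one must also handle the degenerate direction $z \sim b_2$: quotienting by $\innp{b_1}$ kills the relation $[a_1,a_2]=b_1$ and leaves $\innp{a_1,a_2,a_3,b_2 \mid [a_2,a_3]=b_2}$, still of Hirsch length $4$. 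In all cases $h(\faktor{N}{H}) = 4$, but we want the bound $n^{3}$, which suggests the exponent is really $\Phi(N)\cdot$ (the $\sqrt{\,}$-saving) and one should track the constants carefully: the bound from Proposition \ref{centralsubgroup} gives $n^{\Phi(N)}$ only after the $\sqrt{n}$ input, so with $\Phi(N)$ computed the final exponent will come out to $3$. The main obstacle is this last bookkeeping step — confirming $\Phi(N)$ and verifying that the $\sqrt{n}$ bound on $\|N_x\|_S$ combines with $c(N)=2$ to give exactly exponent $3$ rather than $4$; everything else is a direct transcription of the two-step argument.
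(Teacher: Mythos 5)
Your overall strategy is exactly the paper's: reduce to the abelianization, then to $y = x\,z$ with $z \in \gamma_2(N)$ central, separate $z$ from $\im(\psi_{\varphi_x,2}) = N_x$ via Proposition \ref{centralsubgroup} together with the $\preceq \sqrt{n}$ bound on $\|N_x\|_S$, and run the same contradiction argument as in Theorem \ref{two_step_conjugacy_precise} (your use of Lemma \ref{norm} to control the norm after replacing $y$ by a twisted conjugate is, if anything, slightly more careful than the text). The problem is the last step, which you yourself flag as unresolved: the computation of $\Phi(N)$. What you compute is wrong, and the argument as written only delivers $n^{\Phi(N)}$, so without the correct value $\Phi(N)=3$ you do not get the exponent $3$.

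Concretely, the quotients you propose are not one-dimensional central quotients in the sense of the definition. Quotienting by a single complementary primitive central element $w = b_1^u b_2^v$ produces a group whose center has rank $2$, not a cyclic center generated by $\pi_H(z)$: for instance $\faktor{N}{\innp{b_1}} \cong \innp{a_1,a_2,a_3,b_2 \mid [a_2,a_3]=b_2}$ has center $\innp{\bar a_1, \bar b_2}$, since killing $b_1$ makes $a_1$ central; in general the element $a_1^{-u}a_3^{v}$ becomes central modulo $\innp{w}$ because $[a_2, a_1^{-u}a_3^{v}] = b_1^{u}b_2^{v} = w$. So these Hirsch-length-$4$ quotients do not qualify, and they do not force $\Phi(N)=4$. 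The correct choice, for a primitive $z = b_1^r b_2^s$ and $u,v$ with $rv - su = \pm 1$, is to quotient by (the isolator of) $\innp{a_1^{-u}a_3^{v},\, b_1^u b_2^v}$; the result is a torsion-free two-step group of Hirsch length $3$ (a Heisenberg-type group) whose center is exactly $\innp{\pi_H(z)}$ — e.g.\ for $z = b_1$ one kills $\innp{a_3, b_2}$ and gets $H_3(\Z)$, and for $z=b_2$ one kills $\innp{a_1,b_1}$. Since any one-dimensional central quotient is nonabelian with cyclic center, its Hirsch length is at least $3$, so $\Phi(N) = 3$, and then your bound $\max\{\|N_x\|_S, \log\|z\|_S\}^{c(N)\Phi(N)} \preceq (\sqrt{n})^{2\Phi(N)} = n^{3}$ closes the proof. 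Filling in this computation is what your proposal is missing; everything before it is sound.
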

\begin{proof}
	Let $x,y \in N$ such that $\|x\|_S,\|y \|_S \leq n$ and where $x \nsim_\varphi y$. First assume that $\pi_{[N,N]}(x) \nsim_\varphi \pi_{[N,N]}(y)$. Proposition \ref{twisted_abelian_separability} (with the automorphism $\varphi$ fixed here) implies there exists a constant $C_1 > 0$ such that $$D_{N/[N,N]}([\pi_{[N,N]}(x)]_{\bar{\varphi}},\pi_{[N,N]}(y)) \leq C_1 \log \left(\|\pi_{[N,N]}(x) \|_S + \|\pi_{[N,N]} (y) \|_S \right).$$ In particular, there exists a surjective group morphism $\map{\pi_1}{N / [N,N]}{Q_1}$ such that $\pi_1(y) \notin \pi_1([x]_{\varphi})$ and where $$|Q_1| \leq C_1 \: \|\pi_{[N,N]}(x)\|_S \: n.$$ Since $\|\pi_{[N,N]}(x)\|_{\bar{S}} \leq n$, it follows that $D_N([x]_\varphi,y) \leq  C_1 \: \log(2n)$. 
	
	Thus, we may assume that $y = x \: z$ where $z \in \ga_2(N) = \langle b_1, b_2 \rangle$ and $\|z\|_S \leq C_2 \: n$. Proposition \ref{centralsubgroup} implies that there exists a surjective group morphism $\map{\pi_2}{N}{Q_2}$ such that $\pi_2(z) \notin \pi_2(\im(\psi_{\varphi_x,2}))$ and where 
	$$
	|Q_2| \leq C_2 \: \text{max} \set{\| \im(\psi_{\varphi_x,2})\|_S, \log \pr{\|z\|_S}}^{2 \: \Phi(N)}
	$$
	for some $C_2 \in \N$. We claim that $\pi_2(x \: z) \notin \pi_2([x]_\varphi)$, and for a contradiction suppose otherwise. Thus, there exists $y \in N$ such that $\pi_2(x \: z) = \pi_2(y \: x \: \varphi(y)^{-1}).$ That implies we may write $\pi_2(z) = \pi_2(y \: x \: \varphi(y)^{-1} \: x)$. In particular, $\pi_2(z) \in \im(\psi_{\varphi_x,2})$ which is a contradiction.
	
	By calculation, one can see that $\Phi(N) = 3$. We conclude that for some $C_3 >0$, we have $D_N([x]_\varphi,y) \leq C_3 \max \{ n^3, \log(n)^6 \}$ and thus $\Conj_{N,S}^\varphi \preceq n^3$.
	\end{proof}

\section{Future Questions}

We list some of the open problems which remain after the main results of this paper.

In Theorem \ref{two_step_conjugacy_precise} we computed an upper bound for the conjugacy separability function for nilpotency class $2$. We conjecture that for a general nilpotent group, this function is of the following form.
\begin{conj}
	Let $N$ be a $\mathcal{F}$-group of nilpotency class $c > 1$, then $\Conj_N(n) \approx n^{\Phi(N) (c - 1)}$. 
\end{conj}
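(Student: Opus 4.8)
The plan is to treat the upper and lower bounds separately. The upper bound $\Conj_{N,S}(n) \preceq n^{\Phi(N)(c-1)}$ should come from a refinement of the argument behind Theorems~\ref{two_step_conjugacy_precise} and~\ref{main_thm}, specialized to $\varphi = \id$; the lower bound $n^{\Phi(N)(c-1)} \preceq \Conj_{N,S}(n)$ should come from a sharpened version of the construction in \cite[Thm 1.8]{Pengitore_1}, whose known gap must be repaired along the way.

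For the upper bound I would induct on $c = c(N)$, with base case $c=2$ supplied by Theorem~\ref{two_step_conjugacy_precise}. Given $x,y$ with $\|x\|_S, \|y\|_S \le n$ and $x \nsim_{\id} y$: if the images of $x,y$ in $N/\gamma_c(N)$ (quotiented by torsion if necessary) are not $\id$-conjugate, apply the inductive hypothesis there; that quotient has class $c-1$ and — one checks this from the definition of $\Phi$ — satisfies $\Phi(\cdot) \le \Phi(N)$, giving a quotient of order $\preceq n^{\Phi(N)(c-2)} \le n^{\Phi(N)(c-1)}$. Otherwise Lemmas~\ref{nearly_twisted_conjugate_lemma} and~\ref{norm} allow us to replace $y$ by $xz$ with $z \in \gamma_c(N)$ of word length polynomial in $n$, and $x \nsim_{\id} xz$ forces $z \notin N_x = X_c^{\Inn(x)} \le Z(N)$. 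We then separate $z$ from the central subgroup $N_x$ using Proposition~\ref{centralsubgroup}, producing a quotient of order $\preceq \max\{\|N_x\|_S, \log\|z\|_S\}^{c\,\Phi(N)}$, and correct it via Corollary~\ref{twisted_central_pullback_matrix_reduction} so that it detects $N_x$ faithfully; as in the proof of Theorem~\ref{main_thm} this corrected quotient separates $xz$ from $[x]_{\id}$. Since $\log\|z\|_S \preceq \log n$, the whole upper bound then reduces to the single sharp estimate
$$
\|N_x\|_S \ \preceq\ (\|x\|_S)^{(c-1)/c},
$$
which is the $c$-step analogue of Proposition~\ref{important_morphism_bound} (the case $c=2$, where $(c-1)/c=\tfrac12$); plugging it in yields quotient order $\preceq (n^{(c-1)/c})^{c\,\Phi(N)} = n^{\Phi(N)(c-1)}$.

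I expect this last estimate, together with the associated bookkeeping, to be the main obstacle. The available ingredient is the polynomial distortion of $\gamma_c(N)$ inside $N$ (Osin, \cite{Osin}), exactly as used for $c=2$ in Proposition~\ref{important_morphism_bound}: an element of $\gamma_c(N)$ expressible as a word of length $\preceq m$ in weight-$c$ commutator generators $S_c$ has $S$-length $\preceq m^{1/c}$. Now $N_x = \psi_{\Inn(x),c}(N_c^{\Inn(x)})$ is generated by the commutators $[y,x]$ with $y$ ranging over generators of $N_c^{\Inn(x)}$, the preimage in $N$ of $C_{N/\gamma_c(N)}(\bar x)$. Because $[\gamma_c(N), N] = 1$, conjugation is trivial on $\gamma_c(N)$, so for $y \in \gamma_{c-1}(N)$ one immediately gets $\|[y,x]\|_{S_c} \preceq \|x\|_S$, hence $\|[y,x]\|_S \preceq (\|x\|_S)^{1/c}$ — comfortably better than needed. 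The genuine difficulty is the part of $N_c^{\Inn(x)}$ lying outside $\gamma_{c-1}(N)$: by Theorem~\ref{twisted_pullback_norm_bound} its generators may have $S$-length a large power of $\|x\|_S$, and a naive bound gives $[y,x]$ a weight-$c$ length of order $(\|x\|_S)^c$, i.e. only $\|[y,x]\|_S \preceq \|x\|_S$. To recover the missing factor one must use the vanishing $[\bar y, \bar x] = 1$ in $N/\gamma_c(N)$: in Mal'cev coordinates adapted to the lower central series, $[y,x]$ is a polynomial of weighted degree $c$ in the coordinates of $y$ and of $x$, the centralizer condition kills precisely the monomials involving the top-weight-in-$x$ coordinates, and bounding the surviving monomials should give weight-$c$ length $\preceq (\|x\|_S)^{c-1}$, hence the desired $(\|x\|_S)^{(c-1)/c}$. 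A second, more bookkeeping-flavoured obstacle is that the pullback of Corollary~\ref{twisted_central_pullback_matrix_reduction} naively inflates the quotient by $p^{v_p(D_{\Inn(x)})}$, which is only polynomially bounded in $\|x\|_S$; one must instead carry out the pullback inside the Hirsch-length-$\le \Phi(N)$ one-dimensional central quotient $M$ of Proposition~\ref{centralsubgroup}, where $Z(M) = \gamma_{c(M)}(M)$ has rank one and the relevant twisted determinant is bounded, so the correction costs only a multiplicative constant.

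For the lower bound I would follow \cite[Thm 1.8]{Pengitore_1} together with Proposition~\ref{conj_finite_ext}. Choose a primitive central $z$ realizing $\Phi(N)$, i.e. for which the minimal one-dimensional central quotient $M = N/H$ has Hirsch length exactly $\Phi(N)$; inside $M$, $\gamma_{c(M)}(M) = Z(M)$ has rank one, and one picks $w \in \gamma_{c(M)-1}(N)$ and $v_0 \in N$ so that $[\pi_H(w), \pi_H(v_0)]$ is a power of a primitive generator of $Z(M)$ and so that $w$ has weight $c-1$ in $N$ (arranging this when $c(M) < c$ is a point requiring care). Running over large primes $p$, consider the pair $u_p = w^{p^\ell}$ and $v_p = w^{p^\ell} z^{p^{\ell-1}}$: since conjugation is trivial on $\gamma_c(N)$ one computes $[u_p]_{\id} = \{ w^{p^\ell} g^{p^\ell} : g \in \{[a,w] : a \in N\}\}$, so $v_p \notin [u_p]_{\id}$, and any finite quotient separating the pair must — by a Proposition~\ref{heisenb_conj_class}-type obstruction — be a $p$-group in which $\pi(z)$ has order $> p^{\ell-1}$, forcing it to factor through a congruence quotient of $M$ of order $\succeq p^{\ell\,\Phi(N)}$. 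Since $\|u_p\|_S \approx p^{\ell/(c-1)}$ by the word-length asymptotics of \cite{Gromov}, writing $n \approx p^{\ell/(c-1)}$ gives $D_N([u_p]_{\id}, v_p) \succeq (n^{c-1})^{\Phi(N)} = n^{\Phi(N)(c-1)}$. The gap in \cite[Thm 1.8]{Pengitore_1} lies precisely in the claim that no economical quotient can avoid factoring through such a model quotient $M$, and closing it — in tandem with the $\|N_x\|_S$ estimate above — is the crux of the whole conjecture.
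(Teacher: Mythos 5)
First, a point of framing: this statement is posed in the paper as a conjecture, not a theorem. The paper proves only the upper bound $n^{\Phi(N)}$ in the case $c=2$ (Theorem \ref{two_step_conjugacy_precise}), a non-explicit polynomial upper bound for general $c$ (Theorem \ref{main_thm}), and the weaker lower bound $n^{(c-1)(c+1)}$ (Theorem \ref{last_main_result}), while explicitly noting that the claimed lower bound $n^{\Phi(N)(c-1)}$ from \cite{Pengitore_1} has a gap. So there is no proof in the paper to measure you against; what you have written is the natural strategy, and its skeleton (induct on $c$, split according to whether the images in $N/\gamma_c(N)$ are conjugate, reduce to separating $z\in\gamma_c(N)$ from the central subgroup $N_x$ via Proposition \ref{centralsubgroup}, correct with Corollary \ref{twisted_central_pullback_matrix_reduction}) faithfully mirrors the proofs of Theorems \ref{two_step_conjugacy_precise} and \ref{main_thm}. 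But as a proof it has genuine gaps, and you flag the two decisive ones yourself, so the proposal does not establish the conjecture.

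Concretely: (1) The entire upper bound hinges on $\|N_x\|_S \preceq (\|x\|_S)^{(c-1)/c}$, and this is nowhere proved. The paper's only general control is Theorem \ref{twisted_pullback_norm_bound}, which bounds $\|N_c^{\Inn(x)}\|_S$ by an unspecified power of $\|x\|_S$, and your Mal'cev-coordinate heuristic (``the centralizer condition kills precisely the top-weight-in-$x$ monomials'') is a hope, not an argument; for $c=2$ the estimate works only because $N_2^{\Inn(x)}=N$ has generators of length $1$, and no analogue of Proposition \ref{important_morphism_bound} is available once the generators of $N_c^{\Inn(x)}$ themselves grow with $x$. (2) For $c\geq 3$ the correction step is not free even when $\varphi=\id$: Corollary \ref{twisted_central_pullback_matrix_reduction} inflates the quotient by $p^{(v_p(D_{\Inn(x)})+k^\ast)h(N)}$, and $D_{\Inn(x)}$ is only polynomially bounded in $\|x\|_S$; your proposed fix (do the pullback inside the Hirsch-length-$\Phi(N)$ quotient $M$, where ``the relevant twisted determinant is bounded'') is unsubstantiated, since the twisted determinant of $\Inn(x)$ on $M$ (more precisely, on $M/\gamma_{c(M)}(M)$, which is what the corollary uses) still depends on $x$. (3) The monotonicity $\Phi(N/\gamma_c(N))\leq \Phi(N)$ needed for your induction is asserted with ``one checks this,'' but primitive central elements of the quotient need not be images of elements of $Z(N)$, and no such statement appears in the paper. (4) The lower bound reduces exactly to the unproved claim that any quotient separating your pair $u_p, v_p$ must have order $\succeq p^{\ell\Phi(N)}$ --- which is precisely the gap in \cite[Thm 1.8]{Pengitore_1} that the paper warns about --- and you leave it open. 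Until (1) and (4) are actually carried out (and (2), (3) patched), the argument only recovers bounds of the type already in Theorems \ref{main_thm} and \ref{last_main_result}, not the conjectured asymptotics $n^{\Phi(N)(c-1)}$.
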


The lower bound was already given in \cite{Pengitore_1}, but unfortunately this argument contains a gap. All known examples satisfy these bounds, so it is still a reasonable conjecture to propose.

In Proposition \ref{centralsubgroup}, we computed an upper bound for the effective seperability function in the case of central subgroups. There is not yet a description for the subgroup separability function for general subgroups. That leads to the following question.
\begin{ques}
	Compute the effective subgroup seperability function for finitely generated nilpotent groups.
\end{ques}

In Section \ref{sec:examples} we gave several explicit examples of the function $\Conj_{N,S}^\varphi$ for nilpotent groups $N$. In all these examples, an upper bound was given by $\Conj_{N,S}$, so in the case of the identity map. We conjecture that that is always the case.

\begin{conj}
	Let $N$ be a nilpotent group with generating subset $S$. For every automorphism $\varphi \in \Aut(N)$, it holds that $$\Conj_{N,S}^\varphi(n) \preceq \Conj_{N,S}(n).$$
\end{conj}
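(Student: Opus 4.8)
The plan is to argue by induction on the nilpotency class $c=c(N)$, mirroring the structure of the proof of Theorem~\ref{main_thm} but now measuring the cost of twisted separation against $\Conj_{N,S}(n)$ itself rather than against an explicit monomial in $n$. Since $\varphi$ is fixed, $\|\varphi\|_S$ is a constant and every factor $(\|\varphi\|_S)^{k}$ is absorbed into $\preceq$. The base case $c=1$ is immediate: Proposition~\ref{twisted_abelian_separability} gives $\Conj_{A,S}^\varphi(n)\preceq\log n$ for fixed $\varphi$, while $\Conj_{A,S}(n)$ is, up to $\approx$, the residual finiteness function of $A$, which is already $\approx\log n$.

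Two reductions precede the induction. First, since $[x]_{\Inn(a)}=[xa]_{\id}\cdot a^{-1}$ for every $a\in N$, Lemma~\ref{rf_translation} yields $\Conj_{N,x,S}^{\Inn(a)}(n)\approx\Farb_{N,[xa]_{\id},S}(n)=\Conj_{N,xa,S}(n)$; hence inner automorphisms are already dominated by $\Conj_{N,S}$, and we may replace $\varphi$ by $\Inn(a)\circ\varphi$ whenever convenient. Second, for every $k\in\Z$ one has $\varphi^{k}(x)\sim_\varphi x$: for $k=1$ the conjugator $z=x^{-1}$ witnesses $x^{-1}\,x\,\varphi(x^{-1})^{-1}=\varphi(x)$, and the general case follows by applying $\varphi^{k-1}$ and using symmetry of $\sim_\varphi$. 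In particular $[x]_\varphi$ is $\varphi$-invariant, which is precisely what makes an induction through central quotients compatible with the twisting.

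For the inductive step, set $Z=\ga_c(N)\le Z(N)$, $\bar N=N/Z$, and let $\bar\varphi$ be the induced automorphism. Given $x,y$ in the $n$-ball with $x\not\sim_\varphi y$, there are two cases. If $\bar x\not\sim_{\bar\varphi}\bar y$ in $\bar N$, then by the inductive hypothesis we separate $\bar y$ from $[\bar x]_{\bar\varphi}$ in a quotient of $\bar N$ of order $\preceq\Conj_{\bar N}(n)$; pulling this back to $N$ separates $y$ from $[x]_\varphi\cdot Z\supseteq[x]_\varphi$, so this case costs $\preceq\Conj_{\bar N}(n)$, and one checks that the conjugacy separability function of a central quotient of a finitely generated nilpotent group is $\preceq$ that of the group. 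If instead $\bar x\sim_{\bar\varphi}\bar y$, then exactly as in the proof of Theorem~\ref{main_thm}, via Lemma~\ref{norm} and Lemma~\ref{nearly_twisted_conjugate_lemma}, we may assume $y=xz$ with $z\in Z$, $z\notin N_{\varphi_x}$, and with $\|z\|_S$ and $\|N_{\varphi_x}\|_S$ both bounded by fixed polynomials in $n$ (Theorem~\ref{twisted_pullback_norm_bound}). It then remains to separate the central element $z$ from the central subgroup $N_{\varphi_x}\le\ga_c(N)$ at cost $\preceq\Conj_{N,S}(n)$.

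This last step is the heart of the matter and the main obstacle. For $\varphi=\id$ the analogous task, separating $z$ from $N_{\Inn(x)}=\im(\psi_{\Inn(x),c})$, is precisely one of the configurations that forces $\Conj_{N,S}(n)$ to be as large as it is; compare the proof of Theorem~\ref{two_step_conjugacy_precise}, where the bound $n^{\Phi(N)}$ is obtained only after Proposition~\ref{important_morphism_bound} improves $\|\im(\psi_{\Inn(x),2})\|_S$ to $\preceq\sqrt n$ using the quadratic distortion of $\ga_2(N)$. To conclude, one would need the family $\{N_{\varphi_x}:\|x\|_S\le n\}$ to be ``no more expensive to separate from'' than the family $\{\im(\psi_{\Inn(x'),c}):\|x'\|_S\le n\}$ coming from ordinary conjugacy. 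The natural route is to show, after reducing $\varphi$ modulo inner automorphisms and using that $\varphi$ acts with only finite data on each graded piece $\ga_i(N)/\ga_{i+1}(N)$, that every $N_{\varphi_x}$ is comparable (with index polynomially bounded in $n$) to some $\im(\psi_{\Inn(x'),c})$ with $\|x'\|_S$ polynomially bounded. The difficulty is that the norm estimate for $N_{\varphi_x}$ supplied by Theorem~\ref{twisted_pullback_norm_bound} is a polynomial of uncontrolled degree in $n$, whereas the decisive estimates in the untwisted case exploit distortion to gain a genuine power saving; obtaining the sharp, distortion-aware norm bounds for the twisted centralizers $N_i^{\varphi_x}$, so that the exponents in the twisted count never exceed those governing $\Conj_{N,S}(n)$, is the substantive content that is still missing, which is why the statement remains a conjecture.
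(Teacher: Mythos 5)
This statement is posed in the paper as an open conjecture; the paper contains no proof of it, so there is nothing on the paper's side to compare against. Your proposal is not a proof either, and to your credit you say so explicitly in the last sentence. What you have written is the natural attack --- run the induction of Theorem \ref{main_thm} with $\varphi$ fixed so that all factors $(\|\varphi\|_S)^{k}$ become constants, and try to charge each step to $\Conj_{N,S}(n)$ instead of to an explicit power of $n$ --- together with a correct identification of where it breaks. The preliminary reductions you state are fine: $[x]_{\Inn(a)}=[xa]_{\id}\cdot a^{-1}$ does follow from the definitions and Lemma \ref{rf_translation}, and $\varphi(x)\sim_\varphi x$ via the conjugator $x^{-1}$ is correct. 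But the argument does not close, for the reason you name: in the case $\bar x\sim_{\bar\varphi}\bar y$ one must separate a central element $z$ from $N_{\varphi_x}$, and Proposition \ref{centralsubgroup} prices this at $\max\{\|N_{\varphi_x}\|_S,\log\|z\|_S\}^{c\,\Phi(N)}$. The only available control on $\|N_{\varphi_x}\|_S$ is Theorem \ref{twisted_pullback_norm_bound}, which gives a polynomial in $\|\varphi_x\|_S\preceq\|x\|_S\preceq n$ of unspecified degree, whereas the untwisted bound of Theorem \ref{two_step_conjugacy_precise} is only as good as it is because Proposition \ref{important_morphism_bound} exploits the distortion of $\ga_2(N)$ to get $\|\im(\psi_{\Inn(x),2})\|_S\preceq\sqrt{n}$. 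Without a distortion-aware analogue of that estimate for the twisted centralizers $N_i^{\varphi_x}$, the exponent produced by your induction cannot be matched to the (itself not fully known) exponent of $\Conj_{N,S}(n)$.

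There is also a second, quieter gap you pass over with ``one checks that'': the claim that the conjugacy separability function of the central quotient $N/\ga_c(N)$ is $\preceq$ that of $N$. Separability-type functions are not monotone under quotients in general --- a finite quotient of $N$ witnessing separation need not factor through $N/\ga_c(N)$, and conversely the quotient group may require larger finite images for its own conjugacy problem. Given that the conjectured asymptotics $n^{\Phi(N)(c-1)}$ involve the invariant $\Phi$, whose behaviour under central quotients is not addressed anywhere in the paper, this step would need its own argument. Both gaps are substantive, which is consistent with the fact that the statement is left as a conjecture.
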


For all known examples, the conjugacy function only depends on the rational or even real Mal'cev completion of the $\mathcal{F}$-group $N$.  It is an open question whether that is true in general.

\begin{ques}
	Let $N_1$ and $N_2$ be two (abstractly) commensurable finitely generated virtually nilpotent groups. Is it true that $$\Conj_{N_1}(n) \approx \Conj_{N_2}(n)?$$ 
\end{ques}

\bibliography{bib}
\bibliographystyle{plain}
\end{document}